\newcommand{\hook}{\makebox[7pt]{\rule{6pt}{.3pt}\rule{.3pt}{5pt}}\,}
\renewcommand{\eqref}[1]{(\ref{#1})}
\newcommand{\belabel}[1]{\begin{equation}\label{#1}}
\newtheorem{mthm}{Theorem}
\newcommand{\C}{{\mathbb C}}
\newcommand{\rr}{\mathbb{R}}
\newcommand{\N}{{\mathbb N}}
\newcommand{\hol}{\mathfrak{hol}}
\newcommand{\bM}{{\overline{\cal M}}}
\newcommand{\bg}{{\overline{\mathbf{g}}}}
\newcommand{\bnab}{{\overline{\nabla}}}
\newcommand{\bR}{{\overline{\mathrm{R}}}}
\newcommand{\R}{{\mathrm{R}}}
\newcommand{\bRic}{{\overline{\mathrm{Ric}}}}
\newcommand{\bscal}{{\overline{\mathrm{scal}}}}
\renewcommand{\div}{{\mathrm{div}}}
\newcommand{\tr}{\mathrm{tr}}
\newcommand{\F}{{\cal F}}
\newcommand{\beq}{\begin{eqnarray*}}
\newcommand{\eeq}{\end{eqnarray*}}
\newcommand{\be}{\begin{eqnarray}}
\newcommand{\ee}{\end{eqnarray}}
\newcommand{\Ric}{{\mathrm{Ric}}}
\newcommand{\beqn}{\begin{equation}}
\newcommand{\eeqn}{\end{equation}}
\newcommand{\Hol}{\mathrm{Hol}}
\newcommand{\s}{\mathrm{s}}
\newcommand{\+}{\oplus}
\newcommand{\G}{\mathbf{G}}
\newcommand{\Spin}{\mathbf{Spin}}
\newcommand{\SO}{\mathbf{SO}}
\newcommand{\Sp}{\mathbf{Sp}}
\theoremstyle{definition}
\newtheorem{re}{Remark}[section]
\newtheorem*{bsp*}{Example}
\newtheorem*{def*}{Definition}
\theoremstyle{plain}
\newtheorem{Lemma}{Lemma}[section]
\newtheorem*{lem*}{Lemma}
\newtheorem{Proposition}{Proposition}[section]
\newtheorem{Corollary}{Corollary}[section]
\newtheorem{Theorem}{Theorem}[section]
\newtheorem*{theo*}{Theorem}
\newtheorem*{conj*}{Conjecture}
\newtheorem*{quest}{Open Questions}
\newcommand{\M}{{\cal M}}
\newcommand{\g}{\mathbf{g}}
\newcommand{\h}{\mathbf{h}}
\newcommand{\W}{\mathrm{W}}
\newcommand{\Z}{\mathrm{Z}}
\newcommand{\scal}{\mathrm{scal}}
\newcommand{\del}{\partial}
\renewcommand{\t}{{(t)}}
\newcommand{\bleml}[1]{\begin{Lemma} \label{#1}}
\newcommand{\blem}{\begin{Lemma}}
\newcommand{\elem}{\end{Lemma}}
\newcommand{\btheo}{\begin{Theorem}}
\newcommand{\btheol}[1]{\begin{Theorem}\label{#1}}
\newcommand{\etheo}{\end{Theorem}}
\newcommand{\bpropl}[1]{\begin{Proposition} \label{#1}}
\newcommand{\bprop}{\begin{Proposition}}
\newcommand{\eprop}{\end{Proposition}}
\newcommand{\bcorl}[1]{\begin{Corollary} \label{#1}}
\newcommand{\bcor}{\begin{Corollary}}
\newcommand{\ecor}{\end{Corollary}}
\newcommand{\bbem}{\begin{re}}
\newcommand{\ebem}{\end{re}}
\newcommand{\bprf}{\begin{proof}}
\newcommand{\eprf}{\end{proof}}
\newcommand{\pr}{\mathrm{pr}}
\numberwithin{equation}{section}
\begin{document}
\title[Hyperbolic evolution equations,  Lorentzian   holonomy and Killing spinors]{Hyperbolic evolution equations, Lorentzian   holonomy, and Riemannian generalised Killing spinors}
\subjclass[2010]{Primary  
53C50, 
53C27; 
53C29,
Secondary  
53C26,
53C44, 
35L02, 
35L10, 
83C05}
\keywords{Lorentzian geometry, Cauchy problem, holonomy groups, parallel null vector field, Killing spinors, parallel spinors, symmetric hyperbolic system}

\author{Thomas Leistner}
\address[TL]{School of Mathematical Sciences, University of Adelaide, SA~5005,
Australia} \email{thomas.leistner@adelaide.edu.au}

\author{Andree Lischewski} 
\address[AL]{Insitut f{\"u}r Mathematik, Humboldt Universit{\"a}t zu Berlin, Unter den Linden~6, 10117 Berlin, Germany} \email{lischews@math.hu-berlin.de}

\thanks{
This work was supported by the Australian Research Council via  the grants FT110100429 and DP120104582.
}
\begin{abstract}
\begin{sloppypar}
We prove that the Cauchy problem for parallel null vector fields on smooth Lorentzian manifolds is well posed. The proof is based on the derivation and analysis of suitable hyperbolic evolution equations  given in terms of the Ricci tensor and other geometric objects.
Moreover, we   classify  Riemannian manifolds satisfying the constraint conditions for this Cauchy problem. 
 It is then possible to characterise certain  holonomy reductions of globally hyperbolic manifolds with parallel null vector in terms of flow equations for Riemannian special holonomy metrics. For exceptional holonomy groups these flow equations have been investigated in the literature before in other contexts. As an application, the results provide a classification of Riemannian manifolds admitting imaginary generalised Killing spinors. We will also give new local normal forms for Lorentzian metrics with parallel null spinor in any dimension.\end{sloppypar}
\end{abstract}

\maketitle


\section{Background and  main results}

Lorentzian manifolds  with parallel null vector fields or parallel null spinor fields arise naturally in geometric as well as physical contexts. In  general relativity they occur as wave-like solutions to the Einstein equations and in string theory they constitute supergravity backgrounds with a high degree of supersymmetry. In geometry they form a class of Lorentzian manifolds with {\em special holonomy}, i.e., whose holonomy group is reduced but the manifold is not locally a product.  The holonomy algebras associated to Lorentzian manifolds with special holonomy were classified in \cite{bb-ike93,leistnerjdg} and {\em local metrics} realising the given holonomy algebras are constructed in \cite{galaev05}. More recently, the interplay between special Lorentzian holonomy, or more specifically, the existence of parallel null vector fields, and {\em global} geometric properties has become the focus of research \cite{candela-flores-sanchez03, bazaikin09,BaumLarzLeistner14,Schliebner15,leistner-schliebner13,BoubelMounoud16}. In the present paper we address the problem of constructing {\em globally hyperbolic} Lorentzian manifolds  by solving a Cauchy problem that arises from the existence of a parallel null vector field. 
Another motivation arises from spin geometry: the existence of a parallel null spinor implies the existence of a parallel null vector field and  it turns out that the associated  Cauchy problem provides some interesting relations to Riemannian spin geometry. In fact, it naturally leads to a classification   result for (complete) Riemannian manifolds admitting so-called {\em imaginary generalised Killing spinors}.

As in a preceding paper \cite{BaumLeistnerLischewski16} we have shown that the Cauchy problem for parallel null vector fields is well posed for {\em real analytic data} and that it always has a globally hyperbolic solution.
The proof  rested on the derivation of suitable evolution equations which can be analysed using the Cauchy-Kowalevski Theorem. More precisely, let $(\bM,\bg)$ be a Lorentzian manifold admitting a nontrivial vector field $V \in \mathfrak{X}(\bM)$ which is parallel with respect to the Levi-Civita connection $\bnab$ of $\bg$ and satisfies $\bg(V,V) = 0$. Suppose further that $(\M,\g)$ is a spacelike hypersurface of $(\bM,\bg)$ which embeds into $\bM$ with Weingarten tensor $\W$. As seen in \cite{BaumLeistnerLischewski16}, requiring the vector field  $V$ to be parallel, imposes on $(\M,\g)$ the constraint
\begin{align}
\nabla U + u \W = 0, \label{cons}
\end{align} 
where $U \in \mathfrak{X}(\M)$ is the vector field given by the negative of the  projection of $V$ onto $T\M$ and $u = \sqrt{\g(U,U)}$. Then, using the Cauchy-Kowalevski Theorem, in \cite{BaumLeistnerLischewski16} it was shown that {\em real analytic} initial data $(\M,\g,U,\W)$ satisfying the constraint (\ref{cons}) can be extended to a Lorentzian manifold $(\bM,\bg)$ with parallel null vector field $V$. 

This result immediately suggest the question whether  the Cauchy problem for a parallel null vector field is also well-posed for {\em smooth} data. For a parallel null spinor this was verified in  \cite{Lischewski15spinor} using techniques surrounding the Cauchy problem for the vacuum Einstein equations. It turns out that these techniques can also be applied here --- after overcoming some difficulties explained below ---  allowing us to prove our  main theorem  that the Cauchy problem for parallel null vector fields is also well posed for {\em smooth data}:

\begin{mthm} \label{cauchy-vf-theo}
Let $(\M,\g)$ be a smooth Riemannian manifold admitting a nontrivial vector field $U$ solving \eqref{cons} for some symmetric endomorphism $\W$ on $\M$. Moreover, let $\lambda \in C^{\infty}(\M, \mathbb{R}^*)$ be a given function. 
Then there exists an open neighbourhood $\bM$ of $\{0\} \times \M$ in $\mathbb{R} \times \M$ and a Lorentzian metric of the form \[\bg = -\widetilde{\lambda}^2 dt^2 + \g_t,\] where $\g_t$ is a family of Riemannian metrics on $\M$ 
and $\widetilde{\lambda}$ is a positive function on $\bM$ with 
\[
\g_0 = \g, \qquad \widetilde{\lambda}|_{\M} = \lambda,\] such that $U$ extends to a parallel null vector field on $(\bM,\bg)$. Moreover, $(\bM,\bg)$ can be chosen to be globally hyperbolic with spacelike Cauchy hypersurface $\M$, i.e., $\M$ is met by every inextendible timelike curve in $(\bM,\bg)$ exactly once.
\end{mthm}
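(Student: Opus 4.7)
The plan is to adapt the proof strategy of the vacuum Einstein Cauchy problem (Choquet--Bruhat) --- and in particular the approach used in \cite{Lischewski15spinor} for parallel null spinors --- to the parallel null vector setting. I work in the gauge $\bg=-\widetilde{\lambda}^{2}\,dt^{2}+\g_{t}$ with zero shift and regard $\widetilde{\lambda}$ as a prescribed smooth positive extension of $\lambda$ to $\rr\times\M$. The sought-after spacetime vector is parametrised as $V=(u/\widetilde{\lambda})\,\del_{t}-U$, with $u=\sqrt{\g_{t}(U,U)}$, so that $V$ is automatically null. Decomposing the condition $\bnab V=0$ with respect to the slicing then yields three pieces: the spatial equation $\nabla U+u\W=0$, which at $t=0$ is exactly the hypothesised constraint \eqref{cons}; first-order transport equations for $u$ and $U$ in the $\del_{t}$-direction; and a second-order equation for $\g_{t}$ whose principal part comes from the Ricci tensor $\bRic$, because $\bnab V=0$ forces $\bRic(V,\cdot)=0$.

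The first task is to turn this into a quasi-linear symmetric hyperbolic (or quasi-linear wave) system for which standard existence theory applies. Following the Choquet--Bruhat--DeTurck scheme, I impose a wave-coordinate-type gauge that converts the condition $\bRic(V,\cdot)=0$ into a quasi-linear wave equation for $\g_{t}$. Coupled to the transport equations for $u$ and $U$ and with prescribed lapse $\widetilde{\lambda}$, this \emph{reduced system} then admits smooth local solutions on an open neighbourhood $\bM$ of $\{0\}\times\M$ by the usual existence theorems for quasi-linear hyperbolic systems on manifolds.

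The second and more delicate task is constraint propagation: one must show that the solution of the reduced system actually satisfies the full parallelism $\bnab V=0$ and not merely the weakened Ricci equation together with the gauge condition. To do this, I set up a closed auxiliary hyperbolic system for the \emph{constraint quantities} --- the gauge-violation tensor together with the components of $\bnab V$ along and transverse to the slices --- verify that this system vanishes identically at $t=0$ thanks to \eqref{cons}, and then invoke uniqueness for the auxiliary system to conclude that the constraints vanish throughout $\bM$. Finding the correct reduced system and the correct integrability identity (playing the role of the twice-contracted Bianchi identity in the Einstein case), so that the constraint quantities obey a homogeneous hyperbolic system, is where I expect the main obstacle to lie; the natural identities now couple the transport equations for $(u,U)$ to the wave equation for $\g_{t}$, and the reduced gauge must be chosen compatibly with the initial Weingarten tensor $\W$ on $\M$. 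Once $\bnab V=0$ is established on $\bM$, a further shrinking of the neighbourhood $\bM$ combined with finite speed of propagation and standard Lorentzian causality arguments produces the required globally hyperbolic development with $\M$ as a Cauchy hypersurface.
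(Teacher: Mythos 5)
There is a genuine gap, and it sits exactly where you say you ``expect the main obstacle to lie''. Your plan is to run the Choquet--Bruhat template: gauge-reduce so that the curvature condition coming from $\bnab V=0$ becomes a quasilinear wave equation for $\g_t$. But for a parallel null \emph{vector} the only Ricci information available is $\bRic(V,\cdot)=0$, i.e.\ $n+1$ scalar equations -- nothing like the full set of $\tfrac{(n+1)(n+2)}{2}$ equations you get from $\Ric=0$ in the vacuum case, or from the nilpotency/total nullity of $\bRic$ forced by a parallel null spinor in \cite{Lischewski15spinor}. No choice of wave-type gauge turns these $n+1$ equations into a hyperbolic evolution for all components of the metric, so the ``reduced system'' you describe is underdetermined and the existence step collapses. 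The paper's way around this is the essential new idea you are missing: introduce the Ricci tensor itself as an additional unknown, a symmetric bilinear form $\Z$ on $T\M$ extended trivially in the $V$-direction, impose $\bRic=\Z\circ \pr^{\bg,V}_{T\M}-\mathrm{Sym}(\bnab E)$ as the (gauge-reduced) second-order equation for $\bg$ -- which is now a full system with the right principal part -- and \emph{close} the system by the first-order transport equation $\bnab_V\Z=0$, which is exactly the consequence of the second Bianchi identity when $V$ is parallel. Correspondingly, the constraint-propagation step is not just a homogeneous system for the gauge tensor and $\bnab V$: one needs linear wave/transport equations for $\bnab V$, $E$, $\bnab_V E$, $(\bnab E)(V)$ and $\delta^{\bg}L$ (with $L$ the ``Einstein tensor'' of $\Z$), and -- crucially -- the initial value of $\Z$ on $\M$ must be chosen by a specific curvature formula (built from $\Ric^{\g}$, $\R^{\g}$, $\W$, $N=U/u$ and $\delta^{\g}\W+d\,\mathrm{tr}_{\g}\W$) so that all these quantities vanish at $t=0$; the constraint \eqref{cons} alone does not hand you this for free.

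Two further, more minor, deviations from the paper's route. First, you prescribe the lapse $\widetilde{\lambda}$ and zero shift globally \emph{and} want to impose a wave-coordinate-type gauge; these conditions are in general incompatible. The paper instead fixes the gauge by $E=0$ relative to the background metric $h=-\lambda^2dt^2+\g$, solves in arbitrary coordinate form, and only \emph{a posteriori} brings the solution into the form $\bg=-\widetilde{\lambda}^2dt^2+\g_t$ by the flow of the normalised gradient of $t$, with $\widetilde{\lambda}$ determined by the solution (only $\widetilde{\lambda}|_{\M}=\lambda$ is arranged). Second, instead of transport equations for $(u,U)$, the paper evolves the full form $\alpha=V^{\flat}$ by the Dirac-type equation $(d+\delta^{\bg})\alpha=0$, which is manifestly symmetric hyperbolic and whose solution is shown to stay a $1$-form by a separate wave-equation argument; your transport ansatz would need its own justification that it couples consistently into a symmetric hyperbolic system.
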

The proof of Theorem \ref{cauchy-vf-theo} in Section \ref{sec3} is based on the theory of quasilinear symmetric hyperbolic PDEs as known from general relativity.  Let us point out, however, one {\em fundamental difference} to related Cauchy problems in general relativity or in \cite{Lischewski15spinor}:

Considering the Cauchy problem for the Einstein equations in general relativity, it follows by definition of the problem that evolution equations for the metric are given in terms of the Ricci (or Einstein) tensor. Similarly, there are integrability conditions for parallel null spinors on Lorentzian manifolds formulated in terms of the Ricci tensor (the Ricci-tensor is  nilpotent \cite{Baum85}) and they lead to obvious evolution equations for the metric in the Cauchy problem for parallel null spinor fields \cite{Lischewski15spinor}. It is important for a smooth solution theory to have evolution equations in terms of the Ricci tensor in these cases because in Lorentzian signature the resulting PDEs can be reformulated as hyperbolic systems, see \cite{FischerMarsden72-1} for instance. In contrast to that, the existence of a parallel null vector field on a Lorentzian manifold yields hardly any nontrivial information about the Ricci tensor. Thus, it is not obvious at all that the methods that work for the Cauchy problem for the Einstein equations or a parallel null spinor field also work for a parallel null vector field and that Theorem \ref{cauchy-vf-theo} can be proved by deriving an evolution equation for the metric $\bg$ in terms of the Ricci tensor. The key idea here is to simply introduce the Ricci tensor as new unknown $\Z=\Ric(\bg) $, consider this as an evolution equation for the metric $\bg$,  and then {\em close the system} by further differentiation that results in  a first order equation for $\Z$. The resulting PDE turns out to be hyperbolic and is a key ingredient for the proof of Theorem \ref{cauchy-vf-theo}.  We believe that this approach can be used in other settings, for example for the Einstein equations with  complicated  energy momentum tensor.

With the result of Theorem \ref{cauchy-vf-theo} at hand,  it is  natural to search for Riemannian manifolds solving the constraint equations \eqref{cons},  in particular for {\em geodesically complete} solutions. As a first step, we exhibit the local structure of solutions to \eqref{cons}. Using the flow of the vector field $U$  in \eqref{cons}, which defines a {\em closed} one-form,  it easily follows that the metric $\g$ can be brought into a adapted normal form:

\begin{mthm}\label{pf}
Any solution $(\M,\g)$ to \eqref{cons} with nowhere vanishing $U$ is locally isometric to
\begin{align}
\left(  \mathcal{I} \times \F, \g = u^{-2} ds^2 + {\h}_s \right), \label{nform-vf}
\end{align}
where $\mathcal{I} \subset \mathbb{R}$ is an interval, ${\h}_s$ is a family of Riemannian metrics on some manifold $\mathcal{F}$ parametrised by $s\in \mathcal{I}$, and  $u^2 = \g(U,U)$. Under this isometry, $U ={u^2}  \partial_s$ and, writing  the function $u$ as a family $u_s = u(s, \cdot)$ of $s$-dependent functions on $\cal F$, in the decomposition $T\M = \mathbb{R}\partial_s \oplus T\F$  we have
\begin{align}
\W= -\frac{1}{u_s} \g(\nabla U, \cdot) = - \begin{pmatrix} \frac{\dot{u}_s}{u^2_s}&\frac{1}{u_s^2} \mathrm{grad}^{{\h}_s}(u_s) \\ \frac{1}{u_s^2}d(u_s) & \frac{u_s}{2}\dot{\h}_s \end{pmatrix}, \label{www}
\end{align} 
where the dot denotes the Lie derivative $\cal L_{\partial_s}$ in $s$-direction.
Moreover, if the vector field $\frac{1}{u^2} U$ is complete, then the universal cover of $\M$ is globally isometric to a manifold of the form (\ref{nform-vf}) with $\cal I=\rr$  and $\cal F$ simply connected.

Conversely, given $(\M,\g)$ as in (\ref{nform-vf}) with $\mathcal{I}=\mathbb{R}$ or $\mathcal{I}=S^1$ a circle, the vector field $U =u^2  \partial_s$ solves \eqref{cons} for $\W$ as in \eqref{www}.  If in addition $\cal F$ is compact and $u$ bounded, then $(\M,\g)$ is complete.\end{mthm}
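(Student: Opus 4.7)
\emph{Proof plan.} The starting observation is that the constraint \eqref{cons} together with symmetry of $\W$ forces $\g(\nabla_X U, Y) = -u\W(X,Y)$ to be symmetric in $X,Y$, so the dual one-form $U^\flat := \g(U,\cdot)$ is closed. Locally (and globally on any simply connected open set) Poincar\'e's lemma yields a smooth function $s$ with $U = \grad s$, whence $u^2 = |ds|^2$ and $\tilde U := U/u^2$ satisfies $ds(\tilde U) = 1$. I would then spread a level set $\mathcal{F} := s^{-1}(s_0)$ by the flow of $\tilde U$ to obtain coordinates $(s, x^1, \ldots, x^{n-1})$ with $\partial_s = \tilde U$ and $U = u^2 \partial_s$. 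Since $U = \grad s$ is orthogonal to $T\mathcal{F}_s$, the metric splits as $\g = g_{ss}\,ds^2 + \h_s$ with $g_{ss} = |\partial_s|^2 = u^{-2}$, yielding \eqref{nform-vf}. Formula \eqref{www} then drops out of a direct computation of $\nabla(u^2 \partial_s)$ using the Christoffel symbols of $u^{-2} ds^2 + \h_s$.

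For the global statement I would lift to the universal cover $\tilde{\M}$, where the closed form $U^\flat$ is exact so $s$ is globally defined; completeness of $U/u^2$ on $\M$ lifts to the same property upstairs. The flow $\Phi: \mathbb{R} \times \mathcal{F}_0 \to \tilde{\M}$ starting from $\mathcal{F}_0 := s^{-1}(0)$ is then a diffeomorphism: surjectivity comes from flowing any point $p$ back by time $-s(p)$, injectivity from $s \circ \Phi(\tau, x) = \tau$. Since $\tilde{\M}$ is simply connected and $\Phi$ is a diffeomorphism, the fibre $\mathcal{F}_0$ inherits simple connectedness, delivering the normal form with $\mathcal{I} = \mathbb{R}$.

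The converse is then a direct verification: with $U = u^2 \partial_s$ in a metric of the form \eqref{nform-vf}, the same Christoffel-symbol computation as before recovers \eqref{cons} with $\W$ as in \eqref{www}. For the completeness assertion with $\mathcal{F}$ compact and $u \leq C$, an energy identity along a geodesic $\gamma(\tau) = (s(\tau), x(\tau))$ gives $|\dot s|^2/u^2 + \h_s(\dot x, \dot x) = |\dot\gamma|^2$, hence $|\dot s| \leq C|\dot\gamma|$; thus $s$ stays in a bounded interval over finite parameter ranges, and compactness of $\mathcal{F}$ (respectively of $S^1$) confines $\gamma$ to a compact subset of $\M$, so it extends. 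The step requiring the most care is the passage to the universal cover --- specifically verifying that $\Phi$ is a global diffeomorphism onto $\tilde{\M}$ and then transferring simple connectedness to $\mathcal{F}_0$; the remaining pieces reduce to the Poincar\'e lemma, elementary ODE flow theory, and bookkeeping with Christoffel symbols.
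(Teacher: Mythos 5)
Your proposal is correct and follows essentially the same route as the paper: closedness of $U^\flat$ from the symmetry of $\nabla U$, a local potential $s$ with $U=\mathrm{grad}^{\g}s$, the flow of $\tfrac{1}{u^2}U$ spreading a level set to produce the splitting $\g=u^{-2}ds^2+\h_s$, the same Weingarten/Christoffel computation for \eqref{www}, and the same bounded-$u$, compact-$\mathcal{F}$ energy estimate along geodesics for completeness. The only cosmetic difference is that where the paper cites \cite[Proposition 8]{leistner-schliebner13} for the universal-cover statement, you spell out the flow diffeomorphism $\Phi:\mathbb{R}\times\mathcal{F}_0\to\widetilde{\M}$ directly, which is the same argument.
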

This theorem gives a (local) classification of Riemanian manifolds satisfying the constraint. It also gives a  method to construct solutions to the constraint equation, in particular {\em complete solutions}: for compact  $\cal F$, if $\cal I=S^1$, the Riemannian manifold   $(\M,\g)$ is complete by compactness, but 
 in Section \ref{complsec}
we  show that this also holds when  $\cal I=\rr$ and $u$  is bounded.

To any manifold $(\M,\g)$ as in \eqref{nform-vf} we can apply Theorem \ref{cauchy-vf-theo}. Let us from now on assume that $\M$ is oriented. It follows that there is a naturally induced orientation on the manifold $\bM$ arising via Theorem \ref{cauchy-vf-theo}. As there is also a parallel null vector on $(\bM,\bg)$ it follows that the holonomy group $\Hol(\bM,\bg)$  of $(\bM,\bg)$, i.e., the group of parallel transports along closed loops, satisfies  
  \[\Hol(\bM,\bg) \subset \SO(n) \ltimes \mathbb{R}^n \subset \SO(1,n+1),\] 
where $\SO(n) \ltimes \mathbb{R}^n$ is the stabiliser in $\SO(1,n+1)$ of a null vector. 
In this case, the main ingredient of $\Hol(\bM,\bg)$  then is the {\em screen holonomy} 
\[{G}:=\mathrm{pr}_{\SO(n)}\Hol(\bM,\bg) \subset \SO(n).\] In \cite{leistnerjdg} it was shown that (the connected component of) the screen holonomy $G$ is always a {\em Riemannian holonomy group}, and hence a product of the groups on Berger's lists \cite{berger55,berger57}.  It is now natural to ask whether one can prescribe ${G}$ by imposing additional conditions on the initial data, i.e., on the family of metrics ${\h}_s$ on $\mathcal{F} \subset \M$. We show that this is indeed the case when $G$ arises as stabiliser of some tensor:


\begin{mthm} \label{doublecone}
Let $(\M,\g,\W,U)$ be given as in \eqref{nform-vf} and \eqref{www} and let $(\bM,\bg)$ be the Lorentzian manifold arising from this choice of initial data via Theorem \ref{cauchy-vf-theo} (for arbitrary choice for $\lambda$). Then $G=\mathrm{pr}_{\SO(n)}\Hol(\bM,\bg) \subset \SO(n)$ lies in the stabiliser of some tensor in $T^{k,l}\mathbb{R}^n$ if and only if there is an $s$-dependent and $\nabla^{{\h}_s}$-parallel family of tensor fields $\eta_s$  on $\mathcal{F}$, of the same type and  subject to the flow equation
\begin{align}
\dot{\eta}_s =-\tfrac{1}{2} \dot{\h}_s^{\sharp} \bullet \eta_s. \label{floweq}
\end{align}
Here, the dot denotes the Lie derivative of a tensor with respect to $\partial_s$, e.g., 
$\dot{\eta}_s:=\mathcal{L}_{\partial_s} \eta_s$, 
 $\dot{\h}^{\sharp} \bullet$ denotes the natural action of the endomorphism $\dot{\h}^{\sharp} \in \text{End} (T\mathcal{F})$ on tensors in $T^{k,l}\mathcal{F}$, and $\sharp$ indicates  the dualisation with respect to $\h_s$.
Moreover:
\begin{enumerate}
\item 
There are proper subgroups $H_1$ and $H_2$ of $\SO(n)$ such that
$G\subset {H}_1 \times {H}_2 $ if and only if 
there is a  local metric splitting
\begin{align}
(\mathcal{F},{\h}_s) \cong (\mathcal{F}_1 \times \mathcal{F}_2, {\h}_s^1+{\h}_s^2)
\end{align} 
with $\Hol(\mathcal{F}_i,{\h}_s^i) \subset {H}_i$. 
\item If  $G$ is contained in one of $\mathbf{SU}(m)$, $\mathbf{Sp}(k)$, $\mathbf{G}_2$, $\mathbf{Spin}(7)$ or trivial, this translates into the  conditions for Riemannian special holonomy metrics from Table \ref{table1}.
\end{enumerate}
\end{mthm}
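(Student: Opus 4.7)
\medskip
\noindent\textbf{Proof plan for Theorem \ref{doublecone}.} The strategy is to match the $G$-action on $\mathbb R^n$ with parallel transport on the screen bundle $\mathcal{S}:=V^{\perp}/\mathbb{R}V$ of $(\bM,\bg)$ and then to read off what $\nabla^{\mathrm{scr}}\eta=0$ says on $\M$ in the normal form \eqref{nform-vf}. By the standard results of \cite{leistnerjdg}, $G=\mathrm{pr}_{\SO(n)}\Hol(\bM,\bg)$ equals the holonomy of $(\mathcal S,\nabla^{\mathrm{scr}})$, so the holonomy principle reduces the main equivalence to the existence of a global $\nabla^{\mathrm{scr}}$-parallel tensor section $\eta\in\Gamma(T^{k,l}\mathcal S)$.

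At a point $p\in\M$ one has $T_p\bar M=\mathbb R T\oplus\mathbb R U\oplus U^{\perp}$ and $V=uT-U$, so $V^{\perp}=\mathbb RV\oplus U^{\perp}$ and hence $\mathcal S|_\M\cong U^{\perp}\subset T\M$ with induced screen metric $\g|_{U^{\perp}}$. In the normal form \eqref{nform-vf} this becomes $\mathcal S|_\M\cong T\mathcal F$ with metric $\h_s$, and $\eta|_\M$ becomes an $s$-dependent family $\eta_s$ of tensors on $\mathcal F$. I would then compute $\nabla^{\mathrm{scr}}_X\eta$ in the two tangent directions of $\M$. For $X\in T\mathcal F$, Gauss's formula for $\mathcal F_s\hookrightarrow(\M,\g)$ (whose unit $\g$-normal is $u\partial_s$) gives $\mathrm{pr}_{T\mathcal F}\nabla^{\g}_X=\nabla^{\h_s}_X$, hence $\nabla^{\mathrm{scr}}_X\eta_s=\nabla^{\h_s}_X\eta_s$. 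For $X=\partial_s$, a short Christoffel-symbol computation from $\g=u^{-2}ds^2+\h_s$ yields
\[
\nabla^{\g}_{\partial_s}\partial_{y^i}=\tfrac12\,\dot\h_s^{\sharp}(\partial_{y^i})-u^{-1}(\partial_{y^i}u)\,\partial_s,
\]
and the $\partial_s$-term drops out of the screen projection to give $\nabla^{\mathrm{scr}}_{\partial_s}\eta_s=\dot\eta_s+\tfrac12\dot\h_s^{\sharp}\bullet\eta_s$. Thus $\nabla^{\mathrm{scr}}\eta|_\M=0$ is equivalent to $\nabla^{\h_s}$-parallelism of $\eta_s$ plus the flow equation \eqref{floweq}, and the ``only if'' direction is immediate from the holonomy principle.

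For the ``if'' direction, start with $\eta_s$ satisfying the two conditions, use the identification above to define $\eta$ on $\mathcal S|_\M$, and extend to $\bar M$ by $\nabla^{\mathrm{scr}}_T$-parallel transport. To check the extension is parallel in all directions, I would exploit the curvature identity coming from $\bnab V=0$: the exchange symmetry of the Riemann tensor combined with $\bR(X,Y)V=0$ yields $\bR(V,\,\cdot\,)=0$, hence the induced identity $u\,R^{\mathrm{scr}}(T,X)=R^{\mathrm{scr}}(U,X)$ along $\M$. Since $U\in T\M$ and $\eta|_\M$ is $\nabla^{\mathrm{scr}}$-parallel along $T\M$, the Ricci identity forces $R^{\mathrm{scr}}(U,X)\eta=0$ for $X\in T\M$, whence $R^{\mathrm{scr}}(T,X)\eta|_\M=0$. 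Viewing $\alpha:=\nabla^{\mathrm{scr}}\eta$ as a one-form with values in $T^{k,l}\mathcal S$, a commutator computation shows that $\alpha|_{\M_t}$ satisfies a linear first-order transport PDE in $t$ whose source vanishes at any $t$ for which $\alpha|_{\M_t}=0$; uniqueness then forces $\alpha\equiv 0$. Independence of $\lambda$ is automatic, as only the initial data and the transport recipe enter.

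For part (1), apply the main equivalence to $\eta_0$ equal to the pair of orthogonal projectors onto complementary $G$-invariant summands of $\mathbb R^n$: this produces two orthogonal $\nabla^{\h_s}$-parallel distributions on $\mathcal F$, and the local de Rham theorem gives the splitting $(\mathcal F,\h_s)\cong(\mathcal F_1\times\mathcal F_2,\h_s^1+\h_s^2)$, while the flow equation separates along the factors since $\dot\h_s=\dot\h_s^1+\dot\h_s^2$. Part (2) is case-by-case: for each of $\mathbf{SU}(m)$, $\mathbf{Sp}(k)$, $\mathbf{G}_2$, $\mathbf{Spin}(7)$, or the trivial group, one reads off the characterising tensor $\eta_0$ and rewrites the flow equation as in Table \ref{table1}. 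The most delicate step will be the bootstrap propagation of the curvature identity $R^{\mathrm{scr}}(T,X)\eta=0$ off $\M$, which is where the hyperbolic machinery underlying Theorem~\ref{cauchy-vf-theo} becomes essential.
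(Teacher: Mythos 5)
Your overall strategy (identify $G$ with the holonomy of the screen bundle, restrict the screen connection to $\M$, compute it in the normal form \eqref{nform-vf}, and invoke the holonomy principle) is exactly the paper's, and your computations of $\nabla^{S}$ along $T\F$ and along $\partial_s$ reproduce the paper's key lemma and corollary. The genuine gap is in the ``if'' direction of the main equivalence. You extend $\eta$ off $\M$ by parallel transport along $T$, and then the transport equation for $\alpha=\nabla^{S}\eta$ in the $T$-direction carries the source term $\R^{S}(T,X)\eta$, which you only know to vanish \emph{on} $\M$. Your remark that ``the source vanishes at any $t$ for which $\alpha|_{\M_t}=0$'' does not close the system: the source is not a linear expression in $\alpha$, so no uniqueness or Gronwall argument applies, and this is precisely the bootstrap you defer to ``the hyperbolic machinery''. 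The paper sidesteps the issue completely by extending $\eta$ by parallel transport along the parallel null vector $V$ itself, i.e.\ solving $\nabla^{S}_{V}\eta=0$. Since $\bnab V=0$ forces $\bR(V,\cdot,\cdot,\cdot)=0$ everywhere on $\bM$, one has $\R^{S}(V,\cdot)=0$ identically, so $A(X):=\nabla^{S}_{X}\eta$ (for $X\in T^{\perp}$) satisfies the \emph{homogeneous} linear transport equation $\nabla^{S}_{V}A=0$ with $A|_{\M}=0$, and uniqueness for this symmetric hyperbolic system gives $A\equiv 0$ with no bootstrap. Your argument can be repaired by making this change of transport direction; as written it is incomplete at the decisive step.

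A second omission concerns part (2). Your one-line recipe ``read off the characterising tensor and rewrite the flow equation'' covers $\mathbf{U}(\tfrac n2)$, $\mathbf{Sp}(k)$, $\mathbf{G}_2$, $\mathbf{Spin}(7)$ and the flat case, but it does not produce the $\mathbf{SU}(m)$ row of Table \ref{table1}, which contains the additional condition $\delta^{\h_s}(\dot{\h}_s)=0$ alongside Ricci-flat K\"ahler and the flow of $J_s$. The paper treats $\mathbf{SU}(m)$ separately: the reduction beyond $\mathbf{U}(\tfrac n2)$ is encoded by the trace condition $\mathrm{tr}\left(J^{S}\circ\R^{S}(X,Y)\right)=0$ via the Ambrose--Singer theorem; the resulting $2$-form $C$ satisfies $\bnab_{V}C=0$ (again using $\R^{S}(V,\cdot)=0$ and the screen Bianchi identity), so $C\equiv 0$ is equivalent to $C|_{\M}=0$, and evaluating $C|_{\M}$ with the Gauss and Codazzi equations of $\F_s\hookrightarrow\M$ yields exactly $\mathrm{Ric}_s=0$ and $\delta^{\h_s}\dot{\h}_s=0$. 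If instead you characterise $\mathbf{SU}(m)$ by a parallel complex volume form and apply your main equivalence, you obtain a different (flow-equation) formulation, not the statement of Table \ref{table1} that the theorem asserts; either way, a nontrivial extra computation is needed that your plan does not contain. Part (1) of your plan (parallel projectors, de Rham splitting) is fine and matches the paper's argument via a parallel decomposable form.
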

\begin{table}[h!]\label{table1}  \centering
  \renewcommand{\arraystretch}{1.6}
  \begin{tabular}{|c|c|c|} 
    \hline
$\mathrm{dim}(\mathcal{F})$ & \mbox{condition on } $\mathcal{F}$ & $ \Hol(\bM,\bg) \subset$ \\
    \hline\hline
$2m$ &  $\begin{array}{c} (\mathcal{F},\omega_s,J_s,{\h}_s=\omega_s(J_s\cdot,\cdot)) \mbox{ Ricci-flat Kaehler},\\
\dot{J}_s =-\tfrac{1}{2} \dot{\h}_s^{\sharp} \bullet J_s,\ \delta^{\h_s}(\dot{\h}_s) = 0 \end{array}$ & $\mathbf{SU}(m) \ltimes \mathbb{R}^{2m}$ \\ \hline
$4k$ &  $\begin{array}{c} (\mathcal{F},\omega^i_s,J^i_s,{\h}_s=\omega^i_s(J^i_s\cdot,\cdot))_{i=1,2,3} \mbox{ hyper-Kaehler,}
\\
\dot{J}^i_s =-\tfrac{1}{2} \dot{\h}_s^{\sharp} \bullet J^i_s \end{array}$ & $\mathbf{Sp}(k) \ltimes \mathbb{R}^{4k}$ \\ \hline
7 &  $\begin{array}{c} (\mathcal{F},\phi_s \in \Omega^3(\F)),{\h}_s={\h}_s(\phi(s)) \mbox { }\mathbf{G}_2 \mbox{ metrics,}\\
\dot{\phi}_s =-\tfrac{1}{2} \dot{\h}_s^{\sharp} \bullet \phi_s  \end{array}$ & 
$\mathbf{G}_2 \ltimes \mathbb{R}^7$ \\ \hline
8 &  $\begin{array}{c} (\mathcal{F},\psi_s \in \Omega^4(\F), {\h}_s={\h}_s(\psi_s)) \mbox { }\mathbf{Spin}(7) \mbox{ metrics,}\\
\dot{\psi}_s =-\tfrac{1}{2} \dot{\h}_s^{\sharp} \bullet \psi_s  \end{array}$ & 
$\mathbf{Spin}(7) \ltimes \mathbb{R}^8$ \\ \hline
$n$ &  $\begin{array}{c}{\h}_s \mbox{ flat} \end{array}$ & 
$\mathbb{R}^n$ \\ \hline
  \end{tabular}
  \vspace{8pt}
  \caption{Equivalent characterisation of special screen holonomy for $(\bM,\bg)$ in terms of flow equations for tensors on $\mathcal{F}$. }
\end{table}
In Table \ref{table1} we write ${\h}_s = {\h}_s(\phi_s)$ and ${\h}_s= {\h}_s(\psi_s)$ to indicate  that for families of $\mathbf{G}_2$ and $\mathbf{Spin}(7)$ structures, the metric $\h_s$ is defined algebraically in terms of a distinguished stable $3$-form $\phi_s$ or a generic  $4$-form $\psi_s$, respectively. The explicit formulas can be found for example in  \cite{bryant87,joyce07,Karigiannis05}.
In particular, Theorems \ref{cauchy-vf-theo} and \ref{doublecone} provide a construction principle for Lorentzian manifolds with reduced screen holonomy. Obviously, warped products $(\cal I \times \mathcal{F}, \g = ds^2 + f(s)  \h_0)$ with $(\mathcal{F},\h_0)$ being a Ricci-flat special holonomy manifold, i.e., $\Hol(\mathcal{F},h_0) \in \{\mathbf{SU}(m),\mathbf{Sp}(k),\mathbf{G}_2,\mathbf{Spin}(7) \}$ or trivial, are obvious examples for the construction in Theorem \ref{doublecone}.

In the final part of this article we turn to applications of these results and constructions.
As a first application of Theorems \ref{cauchy-vf-theo} and \ref{doublecone}, we address a classification problem in \textit{Riemannian} spin geometry. In doing so, we have to change our point of view slightly: so far, the object of interest was the Lorentzian manifold $(\bM,\bg)$ constructed from initial data $(\M,\g,\W)$ via Theorems \ref{cauchy-vf-theo} or \ref{doublecone}. In Section \ref{igr}, however, $(\bM,\bg)$ is regarded as an auxiliary object and we show how the detailed study of $\Hol(\bM,\bg)$ from the previous statements can in turn be used to prove  a partial classification of Riemannian manifolds $(\M,\g)$ admitting \textit{ imaginary }$\W$\textit{-Killing spinors}. By definition, these are sections $\varphi $ of the complex spinor bundle $\mathbb{S}^{\g}\to \M$ of $(\M,\g)$  satisfying
\begin{align}
\nabla^{\mathbb{S}^{\g}}_X \varphi = \frac{\mathrm{i}}{2} \  \W(X) \cdot \varphi, \label{genks}
\end{align}
for some $\g$-symmetric endomorphism $\W$. Here, $\cdot$ denotes Clifford multiplication. Clearly, condition \eqref{genks} arises as a generalisation of the equation for imaginary Killing spinors, for which  $\W = \frac{\mathrm{ i}}{2} \  \mathrm{Id}$, see  \cite{bfgk91}. Moreover, solutions to equation (\ref{genks}) are the counterpart to real generalised Killing spinors which have been in the focus of recent research, for example in \cite{AmmannMoroianuMoroianu13,baer-gauduchon-moroianu05}. Given a solution to \eqref{genks}, we denote by $U_{\varphi} \in \mathfrak{X}(\M)$ the Dirac current of $\varphi$, given by 
\begin{equation}
\label{diracU}
\g(U_{\varphi},X) = - i \,(X \cdot \varphi,\varphi),\quad \text{  for all $X\in T\M$,}\end{equation}
 and assume that $\varphi$ solves the algebraic constraint
\begin{align}
 U_{\varphi} \cdot \varphi =\mathrm{ i}\,u_{\varphi} \,\varphi, \label{spinor-2}
\end{align}
where $u_{\varphi}= \sqrt{\g(U_{\varphi},U_{\varphi})} = \|\varphi\|^2$. This constraint is known to hold for imaginary Killing spinors, i.e. $\W = \lambda \ \mathrm{ Id}$, and it can also be motivated from the perspective of Lorentzian manifolds with parallel null spinors, cf. \cite{BaumLeistnerLischewski16}. We obtain the following classification result which generalises results from \cite{baum89-3,baum89-2}, see also \cite{bfgk91}, where it is shown that in the complete case and for $ \W = f \ \mathrm{ Id}$, $(\M,\g)$ is necessarily isometric to a warped product.

\begin{mthm} \label{imks}
Let $(\M,\g)$ be a Riemannian spin manifold admitting an imaginary $\W$-Killing spinor $\varphi$ satisfying the algebraic equation \eqref{spinor-2}. Then:
\begin{enumerate}
\item \label{imks1}$(\M,\g)$ is locally isometric to
 \begin{align}
 (\M,\g) = \left(\cal I \times \mathcal{F}_1 \times...\times \mathcal{F}_k, \g = \frac{1}{u^2} ds^2 + \h^1_s + ...+\h^k_s \right)\label{dataf}
 \end{align}
for Riemannian manifolds $(\mathcal{F}_i,{\h}_s^i)$ of dimension $n_i$, $u=||\varphi||^2$, $\cal I $ an interval, and under this isometry $\W$ is given by \eqref{www}. Moreover, for each $i=1,...,k$, each $h^i_s$ is a family of special holonomy metrics to which exactly one of the cases of Table \ref{table1} applies.
\item\label{imks2} If $(\M,\g)$ is simply connected and the vector field $\frac{1}{u_{\varphi}^2} U_{\varphi}$ is complete, the  isometry \eqref{dataf} is global with $\cal I = \mathbb{R}$. 
\item \label{imks3} Conversely, every Riemannian manifold $(\M,\g)$ of the form \eqref{dataf} with $\cal I \in \{S^1,\mathbb{R} \}$, where $u$ is any positive function and $(\mathcal{F}_i,\h^i_s)$ are families of special holonomy metrics subject to the flow equations in Table \ref{table1} is spin and admits an imaginary $\W$-Killing spinor $\varphi$ for $\W$ given by \eqref{www} with $u=||\varphi||^2$. $\varphi$ solves equation (\ref{spinor-2}).
\end{enumerate}
\end{mthm}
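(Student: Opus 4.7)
The plan is to lift the problem to the Lorentzian setting and then apply Theorems \ref{cauchy-vf-theo}, \ref{pf} and \ref{doublecone}. The first step is to establish the \emph{constraint equation} \eqref{cons} for the Dirac current $U_\varphi$ of an imaginary $\W$-Killing spinor $\varphi$ satisfying the algebraic relation \eqref{spinor-2}. Differentiating \eqref{diracU} in direction $Y$ and using the generalised Killing equation \eqref{genks} together with \eqref{spinor-2}, one obtains, after a short Clifford-algebra computation using $(X\cdot\psi_1,\psi_2) = -(\psi_1,X\cdot\psi_2)$, an identity of the form $\g(\nabla_Y U_\varphi, X) + u_\varphi\,\g(\W(Y),X) = 0$ for all $X,Y$, i.e.\ $\nabla U_\varphi + u_\varphi \W = 0$. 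Hence $(\M,\g,U_\varphi,\W)$ is a set of initial data for the Cauchy problem considered in Theorem~\ref{cauchy-vf-theo}.

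Next I would apply Theorem~\ref{cauchy-vf-theo} with an appropriate choice of $\lambda$ (for instance $\lambda=1$) to produce a globally hyperbolic Lorentzian extension $(\bM,\bg)$ of $(\M,\g)$ carrying a parallel null vector field $V$ with $V|_\M$ corresponding to $U_\varphi$. The key point is now to promote $\varphi$ to a \emph{parallel null spinor} $\Phi$ on $(\bM,\bg)$. For this one uses the machinery of \cite{BaumLeistnerLischewski16,Lischewski15spinor}: the algebraic condition \eqref{spinor-2} is precisely the requirement that $\varphi$ arises as the restriction of a Lorentzian pure null spinor whose Dirac current is $V$, and the generalised Killing equation on $\M$ is then equivalent to $\nabla^{\bg}\Phi=0$ along $\M$. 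Since $V$ is parallel on $(\bM,\bg)$, a holonomy/parallel-transport argument extends this parallel spinor to all of $\bM$, giving $\Phi$ parallel with Dirac current $V$.

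Now I invoke the structural results: the existence of a parallel null spinor on $(\bM,\bg)$ forces the screen holonomy $G=\mathrm{pr}_{\SO(n)}\Hol(\bM,\bg)$ to lie in the stabiliser of a nonzero spinor in $\mathbb{R}^n$, whose connected components are, by Wang's classification, products of $\{e\},\mathbf{SU}(m),\mathbf{Sp}(k),\mathbf{G}_2,\mathbf{Spin}(7)$. Applying Theorem~\ref{doublecone}(1) iteratively yields a local splitting $(\mathcal{F},\h_s)\cong(\mathcal{F}_1\times\cdots\times\mathcal{F}_k,\h^1_s+\cdots+\h^k_s)$ with each factor governed by one of the flow equations listed in Table~\ref{table1}, via Theorem~\ref{doublecone}(2). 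Combining this with the local normal form from Theorem~\ref{pf} (applied to $U_\varphi$, which is nowhere zero since $u_\varphi=\|\varphi\|^2>0$) yields the local isometry \eqref{dataf} together with the explicit form \eqref{www} of $\W$, proving~\eqref{imks1}. Claim~\eqref{imks2} follows by the second half of Theorem~\ref{pf}: completeness of $\tfrac{1}{u_\varphi^2}U_\varphi$ and simple-connectedness promote the local isometry to a global one with $\cal I=\mathbb{R}$, and the product decomposition globalises because each parallel tensor defining the factors lifts to the universal cover.

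For the converse \eqref{imks3}, given $(\M,\g)$ of the form \eqref{dataf} with the listed flow equations, I would first construct $(\bM,\bg)$ and its parallel null vector field via Theorem~\ref{cauchy-vf-theo}, and verify via Theorem~\ref{doublecone} that its screen holonomy sits in the appropriate stabiliser of a spinor, whence $(\bM,\bg)$ admits a parallel null spinor $\Phi$. Restricting $\Phi$ to $\M$, unwinding the Lorentzian spinor covariant derivative into tangential and normal parts, and using the form \eqref{www} of the Weingarten tensor, produces a spinor $\varphi$ on $\M$ satisfying \eqref{genks} for $\W$ as in \eqref{www}. The algebraic constraint \eqref{spinor-2} holds because $\Phi$ is null with Dirac current $V$, which restricts to $U_\varphi$ on $\M$; spinness of $\M$ is automatic since $\M$ is a spacelike hypersurface in the time-oriented spin manifold $\bM$ (or can be seen directly from the factors in Table~\ref{table1}).

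The main obstacle I anticipate is the passage between the generalised Killing spinor on $\M$ and the parallel null spinor on $\bM$: one must carefully control how the Lorentzian spinor bundle restricts to $\M$ and show that the algebraic constraint \eqref{spinor-2} is exactly what is needed to identify $\varphi$ with the restriction of a null spinor, and conversely that the extension of $\varphi$ to a parallel spinor on $\bM$ is consistent and well-defined (rather than just locally so). This is where the work of \cite{BaumLeistnerLischewski16,Lischewski15spinor} on the Cauchy problem for parallel null spinors will be used most heavily.
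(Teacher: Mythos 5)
Your proposal is correct and follows essentially the same route as the paper: derive the constraint \eqref{cons} from \eqref{genks} and \eqref{spinor-2}, apply Theorem~\ref{cauchy-vf-theo}, extend $\varphi$ to a parallel spinor on $(\bM,\bg)$, invoke the Lorentzian holonomy classification to place the screen holonomy in a product of $\mathbf{SU}$, $\mathbf{Sp}$, $\mathbf{G}_2$, $\mathbf{Spin}(7)$ or trivial factors, and then combine Theorems~\ref{pf} and \ref{doublecone} (and their converses, plus the holonomy principle and the hypersurface spinor formulas) for parts (\ref{imks1})--(\ref{imks3}). The only step where the paper is more concrete than your sketch is the extension of $\varphi$ to a parallel spinor: rather than a ``holonomy/parallel-transport argument,'' it defines $\phi$ by $\bnab_V\phi=0$, sets $A(X)=\bnab_X\phi$, and uses $\bR(V,\cdot)=0$ to show $\bnab_V A=0$, a linear symmetric hyperbolic equation with $A|_{\M}=0$ (by the hypersurface formula and the generalised Killing equation), whence $A\equiv 0$.
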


As a second application,
we give a local normal form for Lorentzian metrics admitting a parallel {\em null} spinor fields. To this end, one uses a relation between spinor fields and vector fields on Lorentzian manifolds provided by the {\em Dirac current}: for any spinor field $\phi$ on a Lorentzian manifold $(\bM,\bg)$ its Dirac current $V_{\phi}$ is given by 
\begin{align}
\bg(X,V_{\phi})=-\langle X\cdot \phi,\phi\rangle \label{diracc}.
\end{align} 
The zeroes of $V_{\phi}$ and $\phi$ coincide and if $\phi$ is parallel then so is $V_{\phi}$. We say that $\varphi$ is {\em null} if  $V_{\varphi}$ is a null vector. We show:

\begin{mthm} \label{nformsp}
Let $(\bM,\bg)$ be a Lorentzian manifold admitting a parallel null spinor field. Then $(\bM,\bg)$ is locally isometric to
\begin{align}
(\bM,\bg) \cong(\mathbb{R} \times \mathbb{R} \times \mathcal{F}_1 \times...\times \mathcal{F}_m,\text{ }2dvdw + \h^1_w+...+\h_w^m), \label{model}
\end{align}
for some integer $m$, manifolds $\mathcal{F}_{i}$ for $i=1,...,m$ where  each  $\h^i_w$ is a $w$-dependent family of Riemannian metrics on $\mathcal{F}_i$ to which exactly one of the cases in Table \ref{table1} applies. Conversely, every manifold as in (\ref{model}) satisfying these conditions admits a parallel null spinor.
\end{mthm}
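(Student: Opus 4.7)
The plan is to combine the Riemannian classification result \Theo{imks} with the uniqueness part of the Cauchy problem \Theo{cauchy-vf-theo}. Given a parallel null spinor $\phi$ on $(\bM,\bg)$, its Dirac current $V_\phi$ defined by \eqref{diracc} is a parallel null vector field. I then fix a point $p\in\bM$, choose a local spacelike hypersurface $\M$ through $p$ transverse to $V_\phi$, and identify the restriction $\varphi:=\phi|_\M$ with a section of the Riemannian spinor bundle $\mathbb{S}^{\g}\to\M$ via the standard correspondence between Lorentzian and Riemannian spin structures along spacelike hypersurfaces. The parallel condition $\bnab\phi=0$ then becomes the imaginary generalised Killing equation \eqref{genks} on $(\M,\g)$, with $\W$ equal to the Weingarten tensor of $\M$, and the nullness of $V_\phi$ translates into the algebraic constraint \eqref{spinor-2}. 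Consequently \Theo{imks}(\ref{imks1}) applies and yields a local isometry
\[(\M,\g)\cong\bigl({\cal I}\times \mathcal{F}_1\times\cdots\times \mathcal{F}_m,\ u^{-2}\,ds^2+\h^1_s+\cdots+\h^m_s\bigr),\]
where each $\h^i_s$ is a family of special holonomy metrics of exactly one of the types listed in Table~\ref{table1}.

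Next I would introduce the candidate model $(\bM',\bg'):=(\mathbb{R}^2\times\mathcal{F}_1\times\cdots\times\mathcal{F}_m,\ 2\,dv\,dw+\h^1_w+\cdots+\h^m_w)$ using the same families $\h^i_w$, and verify by a short Christoffel-symbol computation that $\partial_v$ is a parallel null vector field on $(\bM',\bg')$. To apply the uniqueness part of \Theo{cauchy-vf-theo}, I would exhibit a spacelike hypersurface $\M'\subset\bM'$, realised as a graph $\{v=\psi(w,x)\}$ with $\psi$ chosen so that the induced metric together with the projection of $\partial_v$ and the resulting Weingarten tensor reproduce the Cauchy data $(\g,U,\W)$ on $\M\subset\bM$; equivalently, one can use the freedom in the lapse $\widetilde\lambda$ in \Theo{cauchy-vf-theo} to gauge-transform the ADM-type form $-\widetilde\lambda^2 dt^2+\g_t$ into the double-null form $2\,dv\,dw+\h_w$. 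Once the Cauchy data match, uniqueness in \Theo{cauchy-vf-theo} yields a local isometry $(\bM,\bg)\cong(\bM',\bg')$ near the two hypersurfaces.

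For the converse, given $(\bM',\bg')$ of the form \eqref{model} with each $\h^i_w$ satisfying the relevant flow equation of Table~\ref{table1}, $\partial_v$ is parallel null by the same computation, and each special tensor field on $\mathcal{F}_i$ (Kähler form, hyper-Kähler triple, $\mathbf{G}_2$-form, or $\mathbf{Spin}(7)$-form) extends by \eqref{floweq} to a parallel tensor field on $\bM'$, this being a direct covariant-derivative calculation dual to the converse direction of \Theo{doublecone}. The common stabiliser in $\mathbf{SO}(n)$ of these parallel tensors is then contained in a spinor-stabilising subgroup by Wang's classification, which lifts to a parallel null spinor on $(\bM',\bg')$. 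The main obstacle of the proof is the matching step in the forward direction: since $u$ genuinely depends on the $\mathcal{F}_i$-coordinates, the graphing function $\psi$ (or, equivalently, the lapse $\widetilde\lambda$) must be chosen carefully to absorb this dependence before the Cauchy uniqueness can be invoked.
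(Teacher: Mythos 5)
Your overall strategy -- restrict the parallel spinor to a spacelike slice, invoke \Theo{imks}, and climb back up with the Cauchy machinery -- is also the paper's strategy, and your converse direction is essentially sound: the paper instead applies \Theo{imks}(\ref{imks3}) to the slice $\{t=0\}$ of \eqref{modelmet} and extends the resulting $\W$-Killing spinor by the parallel-transport-plus-hyperbolic-uniqueness argument of Proposition~\ref{hlem}/Section~\ref{ikssec}, whereas you extend the special tensors and quote the spinor-fixing property of the screen holonomy from \cite{leistnerjdg,BaumLarzLeistner14}; both routes work once one observes that those arguments use only the parallel null vector and a Cauchy hypersurface, not that $\bM$ arose from \Theo{cauchy-vf-theo}. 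The genuine gap is in your forward direction. First, \Theo{cauchy-vf-theo} has no ``uniqueness part'' you can apply: the only uniqueness in the paper is the gauge-fixed one in the remark at the end of Section~\ref{sec5} (uniqueness of $\bg$ for a fixed background $h$ under $E=0$). To conclude that the given $(\bM,\bg)$ is locally isometric to your candidate $(\bM',\bg')$ from matching hypersurface data, you would first have to prove a local geometric uniqueness statement -- e.g.\ put $\bg$ into wave gauge relative to $h$ and check that $(\bg, V_\phi^\flat, \Z:=\bRic)$ then solves the symmetric hyperbolic system \eqref{eva}, \eqref{ze}, \eqref{eq11} with exactly the initial data of Section~\ref{idat}, in particular that the induced Ricci tensor satisfies \eqref{iv} -- none of which is supplied.

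Second, and more concretely, your matching device cannot work as described when $u$ depends on the $\mathcal{F}_i$-coordinates. A graph $\{v=\psi(w,x)\}$ in $(\mathbb{R}^2\times\mathcal{F},\,2\,dv\,dw+\h_w)$ has induced metric $2\psi_w\,dw^2+2\psi_{x^i}\,dx^i\,dw+\h_w$; to reproduce data of the form $u^{-2}ds^2+\h_s$ with the same fibre coordinates and the same families $\h_w$ you must kill the cross terms, which forces $\psi=\psi(w)$ and hence $u$ constant along the fibres. Allowing additional $w$-dependent fibre diffeomorphisms changes the families $\h_w$, so ``the same families'' no longer match; and the lapse freedom in \Theo{cauchy-vf-theo} only prescribes $\widetilde\lambda$ on the initial slice, so it cannot by itself convert $-\widetilde\lambda^2dt^2+\g_t$ into double-null form -- that special structure ($\widetilde\lambda\equiv1$, zero shift, $\g_t=ds^2+\h_{t+s}$) is precisely what is to be proved. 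The paper avoids matching your original slice altogether: it rewrites the model as $-dt^2+(ds^2+\h_{t+s})$ via $v=-t+s$, $w=t+s$, whose $t=0$ data has constant $u$ by construction, and reduces the theorem to the equivalence ``the model admits a parallel null spinor if and only if its $t=0$ slice admits an imaginary $\W$-Killing spinor satisfying \eqref{spinor-2}'' together with \Theo{imks}. To salvage your route you would either have to choose the initial spacelike hypersurface in $(\bM,\bg)$ so that $u=-\bg(V_\phi,T)$ is constant on it (this needs an argument), or carry out the missing fibre-coordinate change; ``choosing $\psi$ or $\widetilde\lambda$ carefully'' does not close the step you yourself identify as the crux.
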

Note that the normal forms in Theorem \ref{nformsp} need not be the most general ones. For example, in signature $(10,1)$, in \cite{bryant00} it is shown that a term $H_w dw^2$, where $H$ is an arbitrary function not depending on $v$ can be added to \eqref{model}. However, the analysis of normal forms for metrics with parallel spinor in \cite{bryant00} rests on the known orbit structure of the action of $\mathbf{Spin}(1,n)$ in for small $n$,  whereas Theorem \ref{nformsp} covers all dimensions.

\bigskip

This paper is organised as follows.
In Section \ref{prel} we recall and collect basic formulas and invariants related to the geometry of spacelike hypersurfaces in Lorentzian manifolds. 
Together with the local existence and uniqueness theorem for solutions to quasilinear first order symmetric hyperbolic systems they are the key ingredients for the {\em proof of Theorem \ref{cauchy-vf-theo}}, which  occupies a large part of the paper and consists of three main steps:
\begin{enumerate}
\item \label{step1}
In Section \ref{sec3} we derive  a first order quasilinear symmetric hyperbolic system with solutions $(\bg,\alpha,Z)$, where $\alpha\in \Omega^*(\bM)$ is a differential form, $\bg$ is a Lorentzian metric and   $\Z$ a symmetric bilinear form on $\bM$. 
\item \label{step2} As a result we will obtain a vector field $V$ and a $1$-form $E$  on $\bM$ such that $\bRic = Z - \text{Sym}(\bnab E)$, where $\bRic$ is the Ricci tensor and $\bnab$ the Levi-Civita connection of $\bg$. In Section \ref{sec4} we will then derive a  a wave equation for $E$ and $\bnab V$ and determine suitable initial conditions for all data along $\M$. Using the constraint equations (\ref{cons}) this will imply that that $E$ and $\bnab V$ vanish on $\bM$ with the conclusion that $V$ is parallel.
\item Since the solutions obtained in Steps (\ref{step1}) and (\ref{step2}) are only local,
in Section \ref{sec5} we will show how to obtain a globally hyperbolic Lorentzian manifold $(\bM,\bg)$ from these local solutions.  
\end{enumerate}
Then in Section \ref{constraintsec} we study Riemannian manifolds satisfying the constraint (\ref{cons}) and prove Theorem \ref{pf}. The proof of Theorem \ref{doublecone} is given in Section \ref{holsec} where we also study the relation to Lorentzian holonomy. Using this, the two applications in Theorems  \ref{imks} and \ref{nformsp}  are obtained in Section \ref{igr}.
 
 \subsection*{Acknowledgements} We would like to thank Helga Baum and Vicente Cort\'{e}s for inspiring discussions. TL  would like to thank   Nick Buchdahl, Mike Eastwood, Jason Lotay and Spiro Karigiannis for helpful discussions in regards to the open problem in Section~\ref{families}. AL would like to thank  Todd~Oliynyk  for discussions about symmetric hyperbolic systems during his visit to Monash University.

\section{Preliminaries} \label{prel}
 Let $(\bM,\bg)$ be a time-oriented Lorentzian manifold of dimension $(n+1)$ with global unit timelike vector field $T \in \mathfrak{X}(\bM)$. Let us now additionally assume that  $\M \subset \bM$ is a spacelike hypersurface with induced Riemannian metric $\g$ and that $T$ restricts to the future-directed unit normal vector field along $\M$.  
 We will use the following index conventions:
  \begin{itemize}
  \item Latin indices $i,j,k,...$ run from $1$ to $n$.
  \item Greek indices $\mu, \nu, \rho, ...$  run from  $0$ to $n$. We will use Greek indices
   whenever we restrict ourselves to local considerations on  $\bM$, which is topologically an open neighbourhood of $\M$ in $\mathbb{R} \times \M$. In this situation we may  fix adapted coordinates $(x^0=t,x^1,...,x^n)$, where the $t$-coordinate refers to the $\mathbb{R}$-factor, the  
Greek indices $\mu, \nu,...$ then refer to the coordinates $(x^0,...,x^n)$ on $\bM$, and Latin indices $i,j,k,...$  to the spatial coordinates $(x^1,...,x^n)$ on $\M$. We may also use this index convention when fixing a local orthonormal frame $(T=s_0,s_1, \ldots , s_n)$ with $\bg(s_\mu,s_\nu)=\epsilon_\mu\delta_{\mu\nu}$ and  $s_i\in T\M$.  It will be clear from the context whether the indices refer to coordinates or an orthonormal frame. 
   \item We will use indices $a,b,c, \ldots $ as {\em abstract indices}, i.e., only indicating the valence of  a tensor. For example, a vector field $B$ is denoted by $B^a$ and a $1$-form by $B_a$. We will however abuse this abstract index notation slightly, a when writing $0$ for a contraction $B(T, \ldots) $ of a tensor $B$ with the time like unit vector field $T$,
\[B_{0b\ldots}:=T^a B_{ab\ldots},\]
but also when using indices $i,j,k,\ldots=1, \ldots, n$ for referring to directions in $T\M$. 
\item We raise and lower indices with respect to a metric. Sometimes we also use the musical notation $\flat$ and $\sharp$ for the dualising a tensor with a metric. It will be clear from the context with which metric we are working. Throughout the paper, we will also use Einstein summation convention, i.e., summing over the same upper and lower index.
\end{itemize}
 By $\bnab$ we denote the Levi Civita connection of $\bg$. 
Moreover, $\delta = \delta^{\bg}$ denotes the divergence operator, i.e., given a $(p,0)$ tensor field $B$ on $(\bM,\bg)$, the divergence is the $(p-1,0)$-tensor
\[\delta B = - \sum_{\mu=0}^n \epsilon_\mu \left(\bnab_{s_\mu}B\right)(s_\mu,\ldots),\]
with an orthonormal basis $s_\mu$, 
or with abstract index notation \[(\delta B)_{b\ldots c}=-\nabla_a B^a_{~b\ldots c}.\] For a vector field $V$ we have $\text{div}^{\bg} V = - \delta V^{\flat}$, or in indices $\div^\bg(V)= \nabla_aV^a$.
For $X,Y\in T\M$ we denote by
\[
\W(X,Y):=-\g( \bnab_XT,Y)
\]
the second fundamental form of $(\M,\g)\subset (\bM,\bg)$, i.e.,
we have
\begin{align}
\bnab_XY=\nabla_XY-\W(X,Y) T,\label{lcf}
\end{align}
where barred objects refer to data on $\bM$ and unbarred objects to data on $\M$. 
The dual of the  second fundamental form is the (symmetric) {\em Weingarten operator}, also denoted by $\W$, i.e., $\W(X,Y)=\g(\W(X),Y)$.
It holds that $\W=-\bnab T|_{T\M}$.
The curvature tensors of $(\bM, \bg)$ and $(\M,\g)$ are related via the Gau{\ss},  Codazzi and Mainardi equation. Here we need the following contracted version: let 
\[ \overline{G} = \bRic - \tfrac{1}{2}\overline{\mathrm{scal}} \cdot \bg \]
denote the divergence-free Einstein tensor of $(\bM,\bg)$, where $\bRic$ is the Ricci tensor and $\bscal$ the scalar curvature of $\bg$. Then we have on $\M$:
\begin{equation}
\begin{aligned} \label{hsf}
\overline{G}(T,T) & = \tfrac{1}{2} \left(\mathrm{scal}^{\g} -\mathrm{tr}_{\g}(\W^2) +  (\mathrm{tr}_{\g}\W)^2 \right),\\
\overline{G}(T,X) & = (\delta^{\g} \W) (X)  + d(\mathrm{tr}_{\g}\W)(X), \text{ for all  } X \in T\M.
\end{aligned}
\end{equation}
Now we specialise the discussion to the case that $\bM$ is topologically an open subset of $\mathbb{R} \times \M$ for some manifold $\M$. We assume that $\partial_t$ is timelike everywhere on $\bM$ and set \[T=\tfrac{1}{\sqrt{\bg (\partial_t,\partial_t)}} \partial_t\] and $\bg$ restricted to $T\M$ is then positive definite. Note that writing $T\M$ in this context refers more precisely to the pullback bundle $\pi^*T\M \rightarrow \bM$, where $\pi :\bM \rightarrow \M \cong \{0\} \times \M$ denotes the projection.

Next, suppose that $V \in \mathfrak{X}(\bM)$ is a {\em null vector field} on $\bM$, i.e. a non-vanishing smooth vector field $V$ such that  $\bg(V,V) = 0$. We decompose $V$ with respect to the splitting $T\bM = \mathbb{R} \partial_t \oplus T\M$, which need not be $\bg$-orthogonal, into
\begin{align}
V = u T - U = u \cdot( T- N), \label{split}
\end{align}
where $u =-\bg(V,T)\in C^{\infty}(\bM)$ a non-vanishing function, $U \in \Gamma(\pi^*T\M)$, $u^2 = \bg(U,U)$ and $N = \frac{1}{u} U$. We also write $N_t$ in order to emphasise the $t$-dependence. Note that $V \neq 0$ and $\bg(V,V) = 0$ requires that $v, u$ and $U$ do not vanish at any point. We emphasise that $\bg(T,U)$ is not necessarily zero on $\bM$. However, we have that $\bg(T,U)_{|\{0 \} \times \M} = 0$ as $T$ was assumed to be the unit normal vector field along $\M$. We identify $\M$ within $\bM$ as $\{0 \} \times \M$. It follows that $v|_{\M} = u|_{\M}$.

Finally we assume that the null vector field $V$ is {\em parallel}, i.e., that $\bnab V \equiv 0$. As a consequence of \eqref{lcf} we obtain for every $X \in T\M$
\begin{equation}
 \label{ij1}
0 \ =\  \text{pr}_{T\M} (\bnab_X V)|_{\M} 
\ =\  -u|_{\M} \W(X) - \nabla_X U 
\ =\ -u|_{\M} \W(X) - \nabla_X U,
\end{equation}
which is precisely the constraint equation \eqref{cons}. 

For the proof of Theorem \ref{cauchy-vf-theo} we will have to analyze various PDEs. As it turns out, they can all be locally reduced to a {\em first order quasilinear symmetric hyperbolic system}. We collect some standard facts on that: 

Consider an equation of the form
\begin{align}
 A^0(t,x,w) \partial_t w =  A^{i}(t,x,w) \partial_{i} w + b(t,x,w), \label{symmh}
\end{align} 
for $k$ real functions on $\rr\times \rr^n$ which are collected in a vector valued function $w(t,x) \in \mathbb{R}^k$. The solution will be defined on an appropriate subset of $\mathbb{R} \times \mathbb{R}^n$ and $(t,x)$ denotes a point in $\mathbb{R} \times \mathbb{R}^n$. Equation~\eqref{symmh} is called \textit{quasilinear symmetric hyperbolic} if the matrices $A^0$ and $A^i$, which may depend on the point $(t,x)$ as well as on the unknown $w$ itself, are symmetric and $A^0$ is positive definite. For given smooth initial data and smooth coefficients, there is a well established local existence and uniqueness result for smooth solutions $w$ to \eqref{symmh} which we shall use repeatedly. For details we refer to \cite[Section 16]{Taylor11-3} or \cite{FriedrichRendall00} and references therein.

\section{Proof of Theorem \ref{cauchy-vf-theo}: the quasilinear symmetric hyperbolic system} \label{sec3}

As indicated in the introduction, for clarity the proof is subdivided into various steps: first, we find evolution equations whose solutions define the metric $\bg$ and the vector field $V$ locally; then we show that the constructed vector field $V$ is indeed parallel and patch the locally defined solutions together and discuss global properties.
In this section we deal with the evolution equations.

\subsection{Finding the evolution equations}
In order to get an idea of how to obtain the desired metric $\bg$ and the vector field $V$ from the data $(\M,\g,U)$, suppose for a moment that we already have a Lorentzian manifold $(\bM,\bg)$ such that
\begin{itemize}
\item $\bM$ is an open subset of $\M \cong \{0 \} \times \M$ in $\mathbb{R} \times \M$,
\item $\widetilde{\lambda}^2:=-\bg(\partial_t,\partial_t) > 0$, where $\partial_t=\partial_0 $ refers to the vector field corresponding to the $t$-coordinate. This defines a time-like unit vector field $T=\widetilde{\lambda}^{-1}\partial_t$.
\item There exists a  parallel null vector field $V \in \mathfrak{X}(\bM)$.  This defines a space like  unit vector field $N$ by relation (\ref{split}), i.e., by $V=v(T-N)$.
\end{itemize} 
We derive some evolution equations as consequences:

 Let $\alpha = V^{\flat}$ denote the $\bg$-dual 1-form to $V$ and consider $\alpha$ as a section in the exterior algebra,  i.e.,  $\alpha \in\Omega^*(\bM)$. Since  $V$ is parallel, $\alpha$ is parallel and we have 
\begin{align}
(d+\delta^{\bg}) \alpha = 0, \label{eva}
\end{align}
where 
 $d+\delta^{\bg} : \Omega^*(\bM) \rightarrow \Omega^*(\bM)$ is  the de Rham operator. It is  given by \[d+\delta = c \circ \nabla,\] where $c: T\bM \otimes \Lambda^* \bM \rightarrow \Lambda^* \bM$ denotes Clifford multiplication by forms, i.e.,
\[ c(X) \omega = X^{\flat} \wedge \omega - \iota_X \omega, \text{ for all  } X \in T\bM, \]
where $\iota_X\omega=\omega(X, \ldots)$ denotes the interior product. 
Symbolically this can be written as
\[ c(X) = (X^{\flat} \wedge)  - \iota_X , \text{ for all  } X \in T\bM. \]
The de Rham operator in \eqref{eva} is of Dirac type, which suggest that it is hyperbolic. We will explicitly verify this later.

Next, as $V$ is parallel, it annihilates the curvature tensor $\bR$ of $\bnab$, i.e., $\bR(V,.,.,.)=0$. In particular, 
\begin{align}
\bRic(V,\cdot) = 0. \label{4r}
\end{align}
To evaluate this further, note that the metric $\bg$ defines a (non orthogonal) splitting 
\begin{align}
T\bM = T\M \oplus \mathbb{R} V \label{te}
\end{align}
 of bundles over $\bM$. We introduce the $\bg$- and $V$-dependent projection
\begin{align*}
\pr^{\bg,V}_{T\M} : T\bM \rightarrow T\M
\end{align*} 
onto the first factor in the splitting  \eqref{te}. That this projection is dependent on $\bg$ and $V$ becomes evident when it is written as
\[ \pr^{\bg,V}_{T\M}=\mathrm{Id}_{T\bM}+ \tfrac{1}{u}\bg(T,.)V,\]
or, written in in local coordinates $(x^0=t, x^1, \ldots, x^n)$ with $(x^1, \ldots, x^n)$ coordinates on $\M$, 
\[
\left(\pr^{\bg,V}_{T\M}\right)_\mu^{\ \ \nu}=\delta_\mu^{~\, \nu}- \tfrac{1}{\widetilde{\lambda}u} \bg_{0\mu}V^\nu.\]
 Note that the endomorphism $\pr^{\bg,V}_{T\M}$ is constant in direction of $V$, 
\begin{equation}\label{nabvpv}
\bnab_V \pr^{\bg,V}_{T\M}=0.\end{equation}
 In contrast to this, we denote by $\mathrm{pr}_{T\M}$ the standard and $\bg$-independent projection
\begin{align*}
\mathrm{pr}_{T\M} : T\bM = \mathbb{R}\partial_t \oplus T\M \rightarrow T\M
\end{align*}
onto the second factor. But then equation \eqref{4r} is equivalent to
\begin{align}
\bRic = \Z \circ \pr^{\bg,V}_{T\M}, \label{evb}
\end{align}
where $\Z$ is a symmetric bilinear form on $T\M$, i.e. $Z \in \Gamma(\pi^*(T^*\M \otimes T^*\M))$, which is trivially extended to a symmetric bilinear form on $T\bM = T\M \oplus \mathbb{R} V$.  Again in local coordinates, this equation becomes
\[\bRic_{\mu\nu}=\Z_{\mu\nu}- \tfrac{2}{\widetilde{\lambda}u} V^\rho \Z_{\rho(\mu}\bg_{\nu)0} +
\tfrac{1}{(\widetilde{\lambda}u)^2}V^\rho V^\sigma\Z_{\rho\sigma}\bg_{0\mu}\bg_{0\nu}.\]
Finally, a first order equation for $\Z$ is then derived as follows. 
As every expression of the form $\bR(V,\cdot,\cdot,\cdot,\cdot)$ vanishes identically on $\bM$, it follows from the second Bianchi identity that $\bnab_V \bRic = 0$, which in particular implies that
\begin{align}
\bnab_V \Z = 0. \label{evc}
\end{align}
Seeking  for a different formulation of this condition,  we use the splitting \eqref{split} of $V$ into  $T$ and $N$, both depending also on $t$, to see that \eqref{evc}  becomes 
\begin{align} \label{zur1}
0 =
(\bnab_{\partial_t} \Z)-\widetilde{\lambda} \  \bnab_{N} \Z.
\end{align}
Now, for brevity we rewrite this condition in local coordinates $(x^0=t, x^1, \ldots, x^n)$ with $(x^1, \ldots, x^n)$ coordinates on $\M$. We obtain that equation (\ref{zur}) is equivalent to 
 \begin{equation}
 \label{ze}
\partial_t\Z_{kl}=\widetilde{\lambda}N^i\partial_i\Z_{kl} +2\overline{\Gamma}^i_{0(k}\Z_{l)i}- 2 \widetilde{\lambda}N^i\overline{\Gamma}^j_{i(k}\Z_{l)j},\end{equation}
in which $\widetilde{\lambda}=\sqrt{-\bg(\partial_t,\partial_t)}>0$, the unit vector field $N$ depends on $V$ via relation (\ref{split}),  and the round brackets denote the symmetrisation of indices.

The advantage of this formulation is that \eqref{ze} is manifestly a $\bg$- and $\partial \bg$-dependent $t$-evolution equation for a $t$-dependent family of symmetric endomorphisms $\Z_t \in \Gamma(\M,T^*\M \otimes T^*\M)$ on $\M$. 
\subsection{Hyperbolic reduction}
Let $(\M,\g)$ be a Riemannian manifold and let $(U,\W)$ be a nontrivial solution to \eqref{cons}. The idea is to impose equations \eqref{eva}, \eqref{evb} and \eqref{ze} locally as a coupled PDE system of first order evolution equations for the unknowns $w=(\alpha,\bg,\partial \bg, \Z)$
 defined on a neighbourhood of $\M$ in $\mathbb{R} \times \M$ with initial data to be specified. 

More precisely, we would like to rewrite \eqref{eva}, \eqref{evb} and \eqref{ze} 
locally as a {\em first} order quasilinear symmetric hyperbolic PDE of the form \eqref{symmh}. A well studied  technical problem is that $\bRic$ is not hyperbolic when being considered as differential operator acting on the metric. There is a standard tool used for the Cauchy problem for the Einstein equations in general relativity how to overcome this, which is referred to as {\em hyperbolic reduction} and explained in detail in \cite{Ringstrom09}. To this end, we bring into play a fixed background metric
\begin{align}
  h:= -\lambda^2 dt^2 + \g. \label{hlam}
\end{align}
on $\mathbb{R} \times \M$, where $\lambda$ is the prescribed function from Theorem \ref{cauchy-vf-theo}. Given local coordinates $(x^0,\ldots, x^n)$, we denote by $\widetilde{\Gamma}^{\mu}_{\alpha \beta}$ the Christoffel symbols of $h$. For any metric $\bg$ on $\mathbb{R} \times \M$ with Christoffel symbols $\Gamma_{\alpha\beta}^\mu$ we then introduce the difference tensor $A^{\mu}_{\alpha \beta} = \Gamma^{\mu}_{\alpha \beta} - \widetilde{\Gamma}^{\mu}_{\alpha \beta}$ and let 
\begin{equation}
\begin{aligned}\label{humm}
F_{\nu} &= g_{\mu \nu} g^{\alpha \beta} \widetilde{\Gamma}^{\mu}_{\alpha \beta}, \\
E_{\nu} &=-g_{\mu \nu}g^{\alpha \beta}A^{\mu}_{\alpha \beta}. 
\end{aligned}
\end{equation}
We denote by $\text{Sym}(\bnab E)[\bg]$ the symmetrisation of the $(2,0)$-tensor $\bg(\bnab E, \cdot)$ for any given Lorentzian metric $\bg$, i.e., $\text{Sym}(\bnab E)(X,Y)=\tfrac{1}{2} \left( (\bnab_XE)(Y)+ (\bnab_YE)(X)\right)$.  Then the operator
\[ \widehat{\Ric}[\bg]:= \Ric[\bg] + \text{Sym}(\bnab E)[\bg] \]
is in coordinates given by

\begin{align} 
 \widehat{\Ric}_{\mu \nu} = - \tfrac{1}{2}\bg^{\alpha \beta}\partial_{\alpha} \partial_{\beta} \bg_{\mu \nu} + \nabla_{(\mu}F_{\nu)} + \underbrace{\bg^{\alpha \beta} \bg^{\gamma \delta} [ \Gamma_{\alpha \gamma \mu} \Gamma_{\beta \delta \nu} + \Gamma_{\alpha \gamma \mu} \Gamma_{\beta \nu \delta} + \Gamma_{\alpha \gamma \nu} \Gamma_{\beta \mu \delta} ]}_{=:H_{\mu \nu}[\bg,\partial \bg]},
\end{align}
where we use the standard notation for symmetrisation $\bnab_{(\mu}F_{\nu)}=\tfrac{1}{2}\left(\bnab_{\mu}F_{\nu}+\bnab_{\nu}F_{\mu}\right)$.
The crucial point is that second order derivatives of $\bg$ appear only in the first term of $\widehat{\Ric}[\bg]$ (assured by addition of $E$, $F$ depends only on $\bg$ and not on its derivatives). 
Hence, in the following we will replace equation \eqref{evb} by the equation
 \begin{align}
\bRic = Z \circ \pr^{\bg,V}_{T\M} - \text{Sym}(\bnab E), \label{eq11}
\end{align}
where we abbreviate the Ricci tensor of $\bg$ as  $\bRic=\Ric[\bg]$.
Of course, eventually we will construct a solution and then show that $E=0$.

\subsection{Local evolution equations as first order symmetric hyperbolic system}

After these preparations, we are now able to show:
\btheo \label{funpde}
 Under the assumptions of Theorem \ref{cauchy-vf-theo} with given data $(\M,\g)$, $\lambda$ and $U$, every point $p \in \M$ admits an open neighbourhood $\mathcal{V}_p$ in $\mathbb{R} \times \M$ on which the equations \eqref{eva}, \eqref{ze} and \eqref{eq11},
considered as coupled PDE for the unknowns $(\bg,\alpha,Z)$, are locally equivalent to a first order quasilinear symmetric hyperbolic PDE of the form \eqref{symmh} provided that \[
\bg|_{\M} = h\quad\text{ and }\quad\alpha|_{\M} = (h(\tfrac{u}{\lambda} \partial_t - U,\cdot))|_{\M},\] 
where
 $h$ is the background metric defined from $\lambda$ and $\g$  in equation \eqref{hlam}.
 \etheo
 
 \bprf
 Note first that for each choice of the unknowns $\alpha=(\alpha_0,....,\alpha_n) \in \Omega^*(\bM)$, where $\alpha_i \in \Omega^i(\bM)$ and $\bg$ we can form vector fields $V \in \mathfrak{X}(\bM)$, $U \in \mathfrak{X}(\bM)$ algebraically by $V=V[\alpha,\bg] = \alpha_1^{~\sharp}$ and $U=U[\alpha,\bg] = -\mathrm{pr}_{T\M}V$. Moreover, we set $u^2=u^2[\alpha,\bg] = \bg(U,U)$. Now fix $p \in \M$ and choose $\mathcal{V}_p$ to be a coordinate neighbourhood of $p \in \mathbb{R} \times \M$ with coordinates $(x^0=t,x^1,...,x^n)$. We define the following open subset in the space of Lorentzian metrics on $\mathcal{V}_p$:
\begin{align}
\mathcal{G}_p := \{ \bg \mid \bg(\partial_0,\partial_0) < 0,\text{ } dt(\text{grad}^{\bg}t) < 0,\text{  }\bg_{|T\M \otimes T\M} > 0 \}
\end{align}
Note that $h \in \mathcal{G}_p$. Given any metric $\bg \in \mathcal{G}_p$, we fix a $\bg$-dependent pseudo-orthonormal basis $(s_0,...,s_n)$ for $\bg$, i.e. $\bg(s_a,s_b) = \epsilon_a \delta_{ab}$, by applying the Gram Schmidt procedure to $(\partial_t,\partial_1,...,\partial_n)$. That is, $s_0 = \frac{1}{\sqrt{(-\bg{(\partial_t,\partial_t)})}} \partial_t=T$ and for $i>0$
\begin{align}
  s_i = s_i[\bg] = \sum_{\mu=0}^n \zeta^{\mu}_i[\bg] \partial_{\mu} \label{rea}
\end{align}
on $\mathcal{V}_p$ for certain coefficients $\zeta^{\mu}_i[\g]$ which depend smoothly and only algebraically on $\bg$. Note that choosing $\bg$ from $\mathcal{G}_p$ ensures that the Gram-Schmidt algorithm is well defined for $(\partial_t,\partial_1,...,\partial_n)$. By the special form of the fixed background metric $h=-\lambda^2 dt^2 + \g$ we have that $\zeta^0_{i>0}[h]=0$. For any $\bg \in \mathcal{G}_p$ we then rewrite equations \eqref{eva}, \eqref{ze} and \eqref{eq11} on $\mathcal{V}_p$ as follows: \\

\subsubsection*{Local reformulation of equation (\ref{eq11})} 
In analogy to \cite{FischerMarsden72-1}, for any Lorentzian metric $\bg \in \mathcal{G}_p$ and  quantities $k_{\mu \nu}$ and $\bg_{\mu \nu, i}$ we consider the system 
\begin{align}
\partial_t \bg_{\mu \nu} &= k_{\mu \nu}, \label{f1} \\
\bg^{ij} \partial_t \bg_{\mu \nu, i} &= \bg^{ij} \partial_{i} k_{\mu \nu}, \label{f2} \\
- \bg^{00} \partial_t k_{\mu \nu} &= 2 \bg^{0 j} \partial_{j} k_{\mu \nu} + \bg^{ij} \partial_{j} \bg_{\mu \nu, i} - 2 H_{\mu \nu}[\bg,k] - 2 \nabla_{(\mu}F_{\nu)}[\bg,k] + 2 \, (\Z \circ \pr^{\bg,V}_{T\M})_{\mu \nu}, \label{f3}
\end{align}
with initial conditions $\bg|_\M=h|_\M$ and 
\begin{equation} \bg_{\mu \nu, i}|_{t=0}=\partial_{i} \bg_{\mu \nu}|_{t=0}= \partial_i h_{\mu\nu}|_{\t=0}
\label{gmni-ini}
\end{equation}
This system with the given initial condition  is equivalent to equation \eqref{eq11}\footnote{This has been shown in \cite{FischerMarsden72-1} for the vacuum Einstein equations $\bRic = 0$  and remains valid in our setting, as here the $\Z$-term in \eqref{eq11} enters only algebraically in the $b_1$-term.}. Indeed, let a triple $(\bg_{\mu \nu},k_{\mu \nu},\bg_{\mu\nu,i})$ solve system \eqref{f1}-\eqref{gmni-ini}. As $\bg^{ij}$ is invertible for $\bg$ sufficiently close to $h$, equation \eqref{f2} is the same as $\partial_t \bg_{\mu \nu, i}=\partial_{i} k_{\mu \nu}$, and equation \eqref{f1} then gives 
\[\partial_t (\bg_{\mu \nu, i}-\partial_{i} \bg_{\mu \nu})=0.\] 
Initial  condition \eqref{gmni-ini}
 ensures $\bg_{\mu \nu, i}-\partial_{i} \bg_{\mu \nu}=0$ at $t=0$ and thus  everywhere. Then equation \eqref{f3} is nothing but equation \eqref{eq11}. Hence, for any fixed $\Z$, the system \eqref{f1}-\eqref{f3} can be rewritten as
\begin{align} \label{co1}
A^0_1(t,x,w^1) \partial_0 w^1 =  A^i_1(t,x,w^1) \partial_i w^1 + b_1(t,x,w^1,\Z,\alpha)
\end{align}
where $w_1 = (\bg_{\mu \nu}, (\bg_{\mu \nu,i})_{i=1,...,n}, k_{\mu \nu})_{\mu, \nu = 0,...,n}$. Moreover, the matrices $A^0_1$ and $A^i_1$ are symmetric and $A^0_1(t,x,w_1)$ is positive definite for $\bg = h$, and hence in a neighbourhood of $h$. In fact, they can be written as 
\[A_1^0
=
\begin{pmatrix}
1 &0&0
\\
0& g^{00} & 0
\\
0&0&-g_i^{~j}
\end{pmatrix}
,
\qquad
A_1^i
=
\begin{pmatrix}
0 &0&0
\\
0& 2g^{0i} & g^{ij}
\\
0&g^{ij}&0
\end{pmatrix}
.\]

\subsubsection*{Local reformulation of equation (\ref{eva})}
Using the orthonormal basis $s_\mu$, we can identify $\bnab\alpha \in T^*M\otimes \Omega^*(M)$ with
\[
- s_0\otimes \bnab_{s_0}\alpha +\sum_{k=1}^ns_k\otimes \bnab_{s_k}\alpha\ \in \ TM\otimes \Omega^*(M).\]
With this identification, equation \eqref{eva} writes as 
\[
0= -c(s_0)\bnab_{s_0}\alpha +\sum_{k=1}^n c(s_k)\bnab_{s_k}\alpha.\]
Using the fundamental Clifford identity \[c(X) \circ c(Y) + c(Y) \circ c(X) = -2 \bg(X,Y) \cdot 1\] for the $\bg$-dependent operator $c$,  equation \eqref{eva} for $\bg \in \mathcal{G}_p$  is equivalent to
\begin{eqnarray*}
\tfrac{1}{\widetilde{\lambda}} \bnab_{\partial_t} \alpha
&=&
 \sum_{k=1}^n
c(s_0) \circ c(s_k)\bnab_{s_a}\alpha
\\
&=&
 \sum_{k=1}^n  \zeta^{0}_k[\bg] \,
c(s_0) \circ c(s_k)\bnab_{\partial_t}\alpha
+  \sum_{i,k=1}^n   \zeta^{i}_k[\bg] \,
c(s_0) \circ c(s_k)\bnab_{\partial_i}\alpha,
\end{eqnarray*}
which can be re arranged to 
\begin{align} \label{26}
\left(\tfrac{1}{\widetilde{\lambda}} -
  \sum_{k=1}^n \zeta^{0}_k[\bg] \, c(s_0)\circ c(s_k) \right) \bnab_{\partial_t} \alpha= \sum_{i,k=1}^n \zeta^{i}_k[\bg] \,c(s_0)\circ c(s_k) \, \bnab_{\partial_i} \alpha.
\end{align}
%
%
By means of the fixed coordinates, we identify $\alpha$ with a smooth map $\alpha:\mathcal{V}_p \rightarrow \Lambda^* \mathbb{R}^{n+1} \cong \mathbb{R}^{ 2^{n+1}}$. In this identification, $\bnab_{\partial_{\mu}} = \partial_{\mu} + \Gamma$, for an endomorphism $\Gamma$ which depends on the Christoffel symbols of $\bg$. Then equation  \eqref{26} becomes  equivalent to a system
\begin{align} \label{co2}
A_2^0(t,x,\bg,\alpha) \partial_t \alpha = \sum_{i = 1}^n A_2^{i}(t,x,\bg,\alpha) \partial_{i} \alpha + b_2(t,x,\alpha,\bg,\partial \bg) 
\end{align}
We claim that the matrices $A_2^0$ and $A_2^i$ are symmetric. To this end, let $(e_0,...,e_n)$ denote the standard basis of $\mathbb{R}^{n+1}$. We consider the operator $c(e_\mu) = (e_\mu^{\flat} \wedge) - \iota_{e_\mu}$, where the dual is formed using the standard Minkowski inner product on $\mathbb{R}^{n+1}$.
Now let $\sigma^\mu$ be the (algebraically) dual basis to $e_\mu$, i.e., with $\sigma^\mu(e_\nu)=\delta^\mu_{\ \nu}$ and with 
$e_\mu^\flat=\epsilon_\mu \sigma^\mu$, where $\epsilon_0 = -1$, $\epsilon_{i>0} = 1$. Furthermore, let
 $\langle \cdot , \cdot \rangle $ be the standard \textit{positive definite} inner product on $\Lambda^* \mathbb{R}^{n+1}$, i.e., with $\sigma^\mu, \sigma^\mu\wedge \sigma^\nu, \ldots, \sigma^0\wedge\ldots \wedge \sigma^n $ as orthonormal basis.
Then
elementary linear algebra shows that
\[ \left\langle c(e_\mu) \gamma, \delta \right\rangle = -\epsilon_\mu \left\langle \gamma, c(e_\mu) \delta \right\rangle, \text{ for all  } \gamma, \delta \in \Lambda^* \mathbb{R}^{n+1}. \]
It follows from the Clifford identity for $c$ that for $i>0$
\[ \left\langle (c(e_0) \circ c(e_i)) \gamma, \delta \right\rangle =  \left\langle \gamma, (c(e_0) \circ c(e_i)) \delta \right\rangle, \]
which proves symmetry of the linear map $c(e_0)\circ c(e_\mu)$ and hence of the matrices $A^{\mu}_2$. Moreover, for $\bg = h$, $A^0_2(t,x,h)$ reduces to a positive multiple of the identity matrix. Thus, $A^0_2$ is positive definite in a neighbourhood $\mathcal{V}_p$ of the initial data if these initial data are chosen as required in the theorem.

\subsubsection*{Local reformulation of \eqref{ze}}
Locally, the $t$-dependent symmetric bilinear form $\Z$ on $T\M$ can be rewritten as $\Z = \Z_{kl} dx^k dx^l$ for $t$- and $x$ dependent coefficients $Z_{kl}$. One verifies immediately that \eqref{ze} is of the form
\begin{equation} \label{co3}
A_3^0(t,x) \partial_t (\Z_{kl})_{k,l>0} = \sum_{i = 1}^n A_3^{i}(t,x,\bg,\alpha) \partial_{i} (\Z_{kl})_{k,l>0} + b_3(t,x,\Z,\bg,\partial \bg, \alpha) 
\end{equation}
where $A_3^0(t,x)=\mathrm{Id}$ is simply the identity matrix and $A_3^{i}(t,x,u)$ are multiples of the identity matrix. 

Combining \eqref{co1}, \eqref{co2} and \eqref{co3} gives a coupled PDE of the form \eqref{symmh} with matrices $A^0$ and $A^i$ being block diagonal with blocks $A_1^0$, $A_2^0$ and $A_3^0$, and blocks $A_1^i$, $A_2^i$ and $A_3^i$, respectively.
The unknowns are  $w=(w_1,w_2,w_3)$, with $w_1 = (\bg_{\mu \nu}, (\bg_{\mu \nu,i}), k_{\mu \nu})$, $w_2=\alpha$, $w_3=\Z_{kl}$,  and the inhomogeneity is $b=(b_1,b_2,b_3)$, which is defined in a neighbourhood of the initial data. Moreover, the previous discussion regarding the blocks of  $A^0$ and the $A^i$'s shows that $A^0$ and $A^i$ are symmetric and $A^0$ is positive definite at least in a local neighbourhood of the initial data.
 \eprf

\subsection{Initial data} \label{idat}
Here we specify a {\em full set of initial data} for the first order PDE for the quantities $(\bg_{\mu \nu}, \bg_{\mu \nu,i}, k_{ij}, \alpha, Z_{kl})$ on $\mathcal{V}_p$ derived in Theorem \ref{funpde}. Initial data for $\bg$ and $\alpha$ were already given in Theorem \ref{funpde} and are needed to ensure that the PDE is indeed hyperbolic. Moreover,  as seen in the proof of Theorem \ref{funpde}, 
to ensure that the system \eqref{f1}-\eqref{f3} is equivalent to \eqref{eq11}, we were forced to 
set 
\[ {\bg_{\mu \nu, i}}_{|t=0}={\partial_{i} \bg_{\mu \nu}}_{|t=0}\]
as  initial condition  $\bg_{\mu\nu, i}$.

Regarding $k_{\mu \nu}$ we observe that $\frac{1}{\lambda}\partial_t$ is the unit normal vector field with respect to $h$ along $\M$ and set 
\begin{align}
{k_{ij}}_{|t=0} = -2 \lambda|_{\M}  \W(\partial_i,\partial_j), \label{umms}
\end{align}
This is 
required, of course, by the fact that $(\M,\g)$ should eventually embed into the solution $(\bM,\bg)$ with Weingarten tensor being the given $\W$. The initial data for $k_{i 0}$ and $k_{00}$ are uniquely determined by the natural requirement \[{(E_{\mu})}_{|t=0} {=} 0\]  for any solution $\bg$. It is by definition of $E$ straightforward to compute, see \cite{Ringstrom09}, that  this is the case if and only if 
\begin{equation}
\begin{aligned} \label{drop}
 {k_{00}}_{|t=0} & = -2 \lambda|_{\M}^2 \, {F_0}_{|t=0} + 2 \lambda|_{\M}^3 \,\mathrm{tr}_{\g} \W, \\
 {k_{0 i}}_{|t=0} & = \lambda|_{\M}^2 \, \left[-F_{i} + \tfrac{1}{2} \g^{jk} (2 \partial_{j} \g_{ki} - \partial_{i} \g_{jk}) +\partial_i(\text{log}\lambda|_{\M})\right]_{|t=0}.
\end{aligned}
\end{equation}
Note that it makes sense here to write $F_{|t=0}$, as by equations \eqref{humm} the $F$-dependence on $\bg$ is only algebraic and $\bg_{|t=0}$ has already been specified. 
Moreover, for the background metric $h$ as in~(\ref{hlam}) and initial conditions for $\bg$ as in Theorem~\ref{funpde}, the initial conditions~(\ref{drop}), simplify to
\begin{equation}
\begin{aligned} \label{dropsimple}
 {k_{00}}_{|t=0} & = -2 \lambda|_{\M}^2 \, \dot{\lambda}|_{\M} + 2 \lambda|_{\M}^3 \,\mathrm{tr}_{\g} \W, \\
 {k_{0 i}}_{|t=0} & =0.
\end{aligned}
\end{equation}
This makes evident that the  initial conditions (\ref{drop}) are independent of the chosen coordinates.

Next, we give initial data for the symmetric bilinear form $\Z|_{\M}$ on $\M$. Their origin is not very transparent at this point, but we shall see in a later step of the proof that the following initial data for $\Z$ are demanded by requiring that $\bnab V = 0$. We set
\begin{equation}
\begin{array}{rcl}  \label{iv}
\Z|_{\M}(U,\cdot) &=& u(d(\mathrm{tr}_{\g}\W)+ \delta^{\g}\W ), \\
\Z|_{\M}(X,Y) &=& \Ric(X,Y) -\R(X,N,N,Y)
 - \W^2(X,Y) + \W(X,Y)\mathrm{tr}_{\g}\W
 \\
&& + \W(X,N)\W(Y,N) 
-\W(X,Y) \W(N,N),
\end{array}
\end{equation}
for all  $X,Y \in U^{\perp}$ and 
where as usual $N=\frac{1}{u}U$.
\subsection{Solving the evolution equation}
Combining the choice of initial data with Theorem \ref{funpde} we find, using the existence and uniqueness result for symmetric hyperbolic systems as discussed earlier, a neighbourhood $\mathcal{U}_p \subset \mathcal{V}_p$ of $p$ in $\mathbb{R} \times \M$ such that the system \eqref{eva}, \eqref{ze} and \eqref{eq11} has a unique smooth solution on $\mathcal{U}_p$ which coincides on $\M \cap \mathcal{U}_p$ with the initial data.

Given this solution $\left( \bg_{\mu \nu}, \  \bg_{\mu \nu, i} ,\  k_{\mu \nu} ,\  \alpha, \ \Z_{kl} \right)$, we define with the coordinates $x^{\mu}$ on $\mathcal{V}_{p}$ specified earlier the bilinear from $\bg=\bg^{\mathcal{U}_p} = {\bg}_{\mu \nu} dx^{\mu} dx^{\nu}$ on  $\mathcal{U}_p$. Furthermore, after restricting $\mathcal{U}_p$ if necessary we may assume that $\bg$ is of Lorentzian signature on $\mathcal{U}_p$ and an element of $\mathcal{G}_p$ as this holds for the initial datum $h$. Moreover $\Z=\Z^{\mathcal{U}_p}=\Z_{kl}dx^k dx^l$ defines a symmetric bilinear form on $\mathcal{U}_p$ and the solution gives $\alpha = \alpha^{\mathcal{U}_p} \in \Omega^*{(\mathcal{U}_p)}$.

For reasons related to global hyperbolicity, which become clear in the last step of the proof, we restrict the solution domain $\mathcal{U}_p$ further as follows.
Let 
\[F^{\mathcal{U}_p}:=\frac{1}{dt(\text{grad}^{\bg}(t))}\text{grad}^{\bg}(t) \in \mathfrak{X}(\mathcal{U}_p),\] where $t$ denotes the function $(t,x) \mapsto t$ and denote by $\phi^{\mathcal{U}_p}$ the flow of $F$. We restrict $\mathcal{U}_p$ to an open neighbourhood of $p$ in $\mathbb{R} \times \M$, denoted with the same symbol, such that
\begin{align}
\forall q \in \mathcal{U}_p :  \exists \tau = \tau(q) \in \mathbb{R}: \text{ } \phi^{\mathcal{U}_p}_{-\tau}(q) \in (\{0\} \times \M) \cap \mathcal{U}_p. \label{finalstep}
\end{align}

It is possible to restrict $\mathcal{U}_p$ further (denoted by the same symbol) such that  the spacelike hypersurface $\M_p := \M \cap \mathcal{U}_p$ is a Cauchy hypersurface in $(\mathcal{U}_p,g)$, for details see \cite[Chapter A.5]{BarGinouxPfaffle07}. By construction of the initial data \eqref{umms} and as $k_{\mu \nu} = \partial_t \bg_{\mu \nu}$, $(\M_p,\g)$ embeds into $\mathcal{U}_p$ with Weingarten tensor (the restriction of) $\W$.


 \section{Proof of Theorem \ref{cauchy-vf-theo}: the wave equation}\label{sec4}
 In this section we continue the proof of Theorem \ref{cauchy-vf-theo} by deriving a linear wave equation on $E$ and $\nabla V$, the  solutions obtained in the previous section,  as well  as appropriate initial conditions that will ensure that $E=0$ and $\nabla V=0$.
\subsection{Fundamental properties of the solution}
Let $(\bg,\alpha,\Z)$ denote the local solution to the system \eqref{eva}, \eqref{ze} and \eqref{eq11}. A priori, it is not clear that $\alpha $ is a $1$-form and defines via $\bg$ a vector field. However, if we consider the Hodge-Laplacian  $\Delta^{HL} = (d+\delta)^2$ on forms and  decompose the solution $\alpha$ as $\alpha = \alpha_0 + ...+\alpha_{n+1} \in \Omega^0(\mathcal{U}_p) \oplus....\oplus \Omega^{n+1}(\mathcal{U}_p)$ we get  as a trivial consequence of $(d + \delta) \alpha = 0$ that
\begin{align} \label{hl}
\Delta^{HL} \alpha_i = 0,\text{ for all }i=0,...,n+1. 
\end{align}
Moreover, our choice of initial data and $(d+\delta)\alpha = 0$ guarantees that for $i\neq 1$
\begin{equation}
\begin{aligned} \label{ina}
(\alpha_i)_{|\M_p} &= 0, \\
(\bnab_{\partial_t} \alpha_i)_{|\M_p} &= 0,
\end{aligned}
\end{equation}
where $\M_p=\cal U_p\cap \M$ as before.
By the main result of \cite{BarGinouxPfaffle07}, the Cauchy problem for the normally hyperbolic operator $\Delta^{HL}$ is well posed and as $\M_p \subset \mathcal{U}_p$ is a Cauchy hypersurface, we conclude that $\alpha_{i} = 0$ for all $i\neq 1$. Thus, the solution $\alpha$ is equivalently encoded in the vector field $V$ such that 
\begin{equation}
\label{Vdef}
V^\flat = \alpha_1 =\alpha \in \mathfrak{X}(\mathcal{U}_p).\end{equation} 
We decompose $V=v(T-N)$ as in the splitting  \eqref{split} and may assume, after further restricting $\mathcal{U}_p$ if necessary,  that the projections of $V$ onto both summands of $T\bM = \mathbb{R}\partial_t \oplus \M$ are nontrivial as this holds for the initial data.

Next we extend the symmetric bilinear form  $\Z \in \Gamma(\mathcal{U}_p, T^*\mathcal{M}_p \otimes T^*\mathcal{M}_p)$ uniquely to a section   $\Z \in \Gamma(\mathcal{U}_p, T^*\mathcal{U}_p \otimes T^*\mathcal{U}_p)$, by demanding that $V$ inserts trivially into $\Z$. For this extended $\Z$ the evolution equation \eqref{ze} which was used to define $\Z$ then becomes equivalent to \eqref{evc} as follows from combining \eqref{zur1} and \eqref{ze}. In summary, we have constructed $(\bg,V,\Z)$  on $\mathcal{U}_p$ which satisfy the equations

\begin{eqnarray}
\bRic &=& Z - \text{Sym}(\bnab E),  \label{eq1a}\\
(d+\delta^{\bg}) V^{\flat} &=& 0, \label{eq2} \\
(\bnab_V Z)(A,B) &=&0\text{ for all  } A,B \in T\M, \label{eq3}, \\
\Z(V,\cdot) &= &0. \label{eq4}
\end{eqnarray}

On $\mathcal{U}_p$ we fix from now on a local $\bg$-pseudo-ONB $s=(s_0,....,s_n)$ as constructed in \eqref{rea}. That means, $T=s_0 = \frac{1}{\sqrt{-\bg(\partial_t,\partial_t)}}\partial_t$ is a unit timelike vector field on $\mathcal{U}_p$ which restricts on $\M_p$ to $\frac{1}{\lambda|_{\M}}\partial_t$, the unit normal vector field to $\M_p$ with respect to $\g$. Moreover, as $h(\partial_t,X) = 0$ for $X \in T\M$, it follows that the $(s_1,...,s_n)$ restricted to $\M$ are tangent to $\M$ and form a pointwise ONB for $(T\M_p,\g)$. 

\bigskip

In the subsequent calculations we simplify and abbreviate our notation for some --- otherwise very lengthy --- formulas as follows: writing
\[ A \equiv B \mod (\ldots ), \] where $A,B$ are tensor fields of the same type over $\bM$ indicates that $A=B$ up to the addition of terms which are \textit{linear} in the quantities specified in the bracket (or contractions of these quantities). The explicit formulas for these linear terms are straightforward to compute in each case but turn out to be irrelevant for our purposes. By $\bnab$ we also denote the covariant deriviatve on tensor fields induced by the Levi Civita connection of $\bg$. It follows from linearity and the product rule for $\bnab$ that
\[ A\equiv  0 \mod (C) \quad\text{implies} \quad \bnab A = 0 \mod (C,\bnab C). \]
As an example, equations  \eqref{eq3} and \eqref{eq4} imply that
\begin{align}
\bnab_V \Z \equiv\ 0 \mod(\bnab V). \label{bz0}
\end{align}
Indeed, the non vanishing terms of $(\bnab_V \Z)$ are $(\bnab_V \Z)(T,X)$
for $X$ a vector field on $\mathcal{U}_p$ which is tangent to $\M$  and $(\bnab_V \Z)(T,T) $. Both can be expressed in terms of $\bnab V$ using $V=\frac{1}{u}(T-N)$ and  equations \eqref{eq3} and \eqref{eq4}:
\begin{eqnarray*}
(\bnab_V \Z)(T,X) 
&=& (\bnab_V \Z)(N,X) - \tfrac{1}{u}\Z(\bnab_V V,X)  \ 
=\  - \tfrac{1}{u}\Z(\bnab_V V,X), \\
(\bnab_V \Z)(T,T) 
&= & (\bnab_V \Z)(N,N) - \tfrac{2}{u}\, \Z(\bnab_V V,N) 
\ =\  { -\tfrac{2}{u}\, \Z(\bnab_V V,N)}. 
\end{eqnarray*}

\subsection{PDEs for $\bnab V$ and $E$}
In the terminology of the previous subsection, we next show that the data $\bnab V$ and $E$ vanish on $\cal U=\mathcal{U}_p$ by showing that they solve a {linear} PDE for which uniqness of solutions is guaranteed. All calculations and operators are with respect to the metric $\bg = \bg^{\mathcal{U}_p}$ on $\mathcal{U}_p$ as just specified. 

We denote with 
$\Delta = \bnab^2$
the Bochner Laplacian (or connection Laplacian) for $\bg$ acting on tensors, as $\Delta B_{b\ldots c}=\bnab^a\bnab_aB_{b\ldots c}$,  in particular on $1$-forms or vector fields. When acting on $1$-forms,  it is related to the Hodge Laplacian $\Delta^{HL}$ on 1-forms via the Weitzenb{\"o}ck formula
\begin{align}\label{HLBL}
\Delta^{HL} = \Delta^{\bnab}  +\bRic,
\end{align}
where depending on the situation we consider $\bRic$ as $(2,0)$ or $(1,1)$ tensor. 

Now we aim for a second order equation for $\bnab V$. 
For this we will prove a series of Lemmas. {\em The general assumption in these  lemmas is that the system of equations \eqref{eq1a}-\eqref{eq4} is satisfied}. 
For brevity  in the proofs we will now use indices $a,b,c, \ldots $ as {\em abstract indices}, i.e., only indicating the valence of  a tensor.  The Bochner Laplacian applied to a vector field $X$ is denoted by
$\Delta X^a=\bnab^b\bnab_bX^a$, where we use Einstein's summation convention. The identity \eqref{HLBL}, for example, reads as
\[\Delta^{HL}X_a = \bnab^b\bnab_bX_a +\bRic_a^{~b}X_b.\]
We will also use expressions such as $\bnab E(V)$ or $\bnab\bnab_V E$. These are meant to be as $V$ inserted into the tensor $\bnab E\in  \otimes^2T^*\cal U$ and into $\bnab\,\bnab E\in   \otimes^3T^*\cal U$ in the respective slot, e.g., 
$\bnab E(V)\in \otimes T^*\cal U $ and 
$\bnab\, \bnab_V E\in   \otimes^2T^*\cal U$. Expressed with indices, this would be $V^a \bnab_bE_a$ and $V^a\bnab_c \bnab_bE_a$, respectively.
We also use $\Delta$ as acting on arbitrary tensors.

\blem\label{1stelem}
The tensor $\bnab V\in T^*\cal U\otimes T\cal U$ satisfies
\begin{equation} \label{1ste}
\Delta \bnab V  \equiv 0  \mod(\bnab V,   (\bnab  \bnab E)(V),  \bnab\, \bnab_V E,  \bnab_V \bnab E).\end{equation}
\elem
\bprf Using abstract index notation and 
successively interchanging covariant derivatives using the curvature tensor we obtain
\begin{eqnarray*}
\Delta \bnab_aV^b
&=&
\bnab^c\bnab_a\bnab_cV^b +V^d \bnab^c\bR_{ca\ d}^{\ \ b} +\bR_{ca\ d}^{\ \ b}\bnab^cV^d
\\
&=&\bnab_a \Delta V^b- \bRic_a^{~c}\bnab_cV^b + \bR^{d~~b}_{~a~~c}\bnab_dV^c
+ V^c\bnab^d\bR_{da~~c}^{\ \ \, b}
+\bR_{ca\ d}^{\ \ b}\bnab^cV^d
\\
&\equiv & \bnab_a \Delta V^b
+V^c \bnab^d\bR_{da~~c}^{\ \ \, b} \mod  \bnab_c V^d.
\end{eqnarray*}
To deal with the first remaining term we use equation \eqref{eq1a}, \eqref{eq4},
equation \eqref{eq2} and its consequence $0=\Delta^{HL}V^b=\Delta V^b  +\bRic^b_{~c}V^c$:
\begin{eqnarray*}
 \bnab_a \Delta V^b
 &
 =
 &
 -\bnab_a\Ric^b_{~c}V^c
 \\
 &=&
 -\bnab_a\Z^b_{~c}V^c+ \tfrac{1}{2}V^c ( \bnab_a \bnab^b E_{c}+ \bnab_a \bnab_cE^b)
 \\
& 
 \equiv &
 0\mod(\bnab_cV^d, V^e  \bnab_c \bnab^d E_{e}, V^e \bnab_c \bnab_eE^d).
\end{eqnarray*}
Finally we use the symmetries of $\R_{abcd}$ to deal with the term $ \bnab^d\bR_{da~~c}^{\ \ \, b}V^c$: 
\begin{eqnarray*}
V^c \bnab^d\bR_{da~~c}^{\ \ \, b}
 &=&
V^c  \bnab^d\bR_{~cda}^{b}
\\
&=&
-V^c \left( \bnab^b \bR_{c\ da}^{~d} +  \bnab_c \bR_{\ \ da}^{db}\right)
\\
&=&
+V^c \left( \bnab^b \bRic_{ca}-  \bnab_c \bRic_{a}^{~b}\right)
\\
&=&
V^c \left( \bnab^b \Z_{ca}-  \bnab_c \Z_{a}^{~b}\right)
+\tfrac{1}{2} V^c \left( \bnab^b \bnab_{c}E_a + \bnab^b\bnab_{a}E_c-  \bnab_c \bnab_{a}E^{b} - \bnab_c \bnab^{b}E_{a} 
\right)
\\
&\equiv &0 \mod(\bnab_cV^d, V^e  \bnab_c \bnab^d E_{e}, V^e \bnab_c \bnab_eE^d, V^e \bnab_e \bnab_cE^d),
\end{eqnarray*}
because of equations \eqref{eq1a}, \eqref{eq3}, \eqref{eq4}, and \eqref{bz0}. This verifies the lemma.
\eprf
The idea is now to prolong equation \eqref{1ste}, i.e., to derive linear equations for the $E$-dependent quantities in the brackets, which should all vanish, until we obtain a closed linear PDE system.
We start with deriving a second order equation for $E$. To this end, we introduce the ``Einstein tensor'' of $\Z$, i.e.,
\[ L:= \Z - \tfrac{1}{2}\mathrm{tr}_{\bg}( \Z) \, \bg. \]
Let $\overline{G} = \bRic- \frac{\bscal}{2}   \bg$ denote the Einstein tensor of $(\mathcal{U}_p,\bg)$. 
By equation \eqref{eq1a} we get for $X, Y \in T\mathcal{U}_p$
\begin{equation}
\begin{aligned} \label{treu}
\overline{G} =  -\text{Sym}(\bnab E)+\Z + \tfrac{1}{2}(\mathrm{tr}_{\bg}(\bnab E) - \mathrm{tr}_{\bg} Z) \cdot \bg
\end{aligned}
\end{equation}
This equation implies
\blem\label{nablaelem}
The $1$-form $E$ satisfies
\begin{align}
0 = \Delta E - \bRic(E^\sharp,.)- 2 \delta^{\bg} L. \label{nablae}
\end{align}
\elem
\bprf 
Taking the divergence $\delta^{\bg}$ on both sides of \eqref{treu} yields
\begin{eqnarray*}
0&=& \delta L_b -\tfrac{1}{2}\bnab^a\bnab_aE_b -\tfrac{1}{2}\bnab^a\bnab_bE_a+ \tfrac{1}{2}\bnab_b\bnab^aE_a
\\
&=&\delta L_b -\tfrac{1}{2}\Delta E_b+ \tfrac{1}{2} \ \bR_{b\ a}^{~a\ c}E_c
\\
&=&\delta L_b -\tfrac{1}{2}\Delta E_b+ \tfrac{1}{2} \ \bRic_{b}^{\ c}E_c,
\end{eqnarray*}
which proves the statement.
\eprf

 Next, we investigate the quantity $\bnab_V E$ and prove 
 \blem \label{x3lem}
 The $1$-form $\bnab_V E$ stisfies
  \begin{align}
\Delta(\bnab_{V} E) = \bnab_{V}  (\delta^{\bg} L) \mod(E,\bnab E, \bnab V). \label{x3}
\end{align}
\elem
\bprf Again we compute in abstract indices commuting covariant derivatives
\begin{eqnarray*}
\bnab^c\bnab_c(V^d\bnab_dE^a)
&\equiv &
V^d\bnab^c\bnab_c\bnab_dE^a\mod( \bnab_cV^d,\bnab_cE^d)
\\
&\equiv &
V^d\bnab_d\Delta E^a \mod( \bnab_cV^d,E^d ,\bnab_cE^d)
\\
&\equiv &
V^d\bnab_d(\delta L)^a \mod( \bnab_cV^d,E^d, \bnab_cE^d)
\end{eqnarray*}
by equation \eqref{nablae}.
\eprf

Next, we find a second order equation for $\bg(\bnab E,V)$, i.e., for $V^b\bnab_aE_b$.
\blem
 \label{x2lem}
 The $1$-form $(\bnab E)(V)$ satifies
 \begin{align} 
\Delta (\bnab E(V))= 0 \mod (\bnab V , E, \bnab  E )
. \label{x2}
\end{align}
\elem
\bprf Similarly as before we compute
\begin{eqnarray*}
\Delta (V^b\bnab_aE_b)&\equiv &
V^b\bnab^c\bnab_c\bnab_a E_b
\mod (\bnab_cV^d, \bnab_c E^d)
\\
&\equiv &
V^b\bnab_a \Delta E_b
\mod (\bnab_cV^d, \bnab_c E^d)
\\
&\equiv &
V^b\bnab_a (\delta L)_b
\mod (\bnab_cV^d, E^d, \bnab_c E^d)
\end{eqnarray*}
Since we computing $\mod(\bnab V)$ it is enough to compute
\[
V^b (\delta L)_b
=
V^b\bnab^c\Z_{cb}- \tfrac{1}{2}V^b\bnab_b(\mathrm{tr}(\Z))
\equiv 0\mod (\bnab V),\]
using the definition of $L$ and $V^bZ_{ba}=0$.
\eprf

Finally we derive an equation for the 1-form $\delta^{\bg} L$. 
\blem
 \label{x1lem}
The $1$-form $\bnab_V\delta L$ satisfies
\begin{equation} 
\bnab_V(\delta L)\equiv 0
\mod(\bnab V,\bnab\,\bnab V, \bnab E ).
\label{x1}
\end{equation}
\elem
\bprf Using the definition of $L$ and of $\delta^\bg$ we compute
\begin{eqnarray*}
V^b\bnab_b(\delta L)_a
&=&
V^b\left( \bnab_b\bnab^cZ_{ca} -\tfrac{1}{2}\bnab_b\bnab_a(Z_c^{~c})\right)
\\
&\equiv& 
V^b \bnab_b\bnab^cZ_{ca}\mod(\bnab V)
\\
&\equiv& 
V^b \bnab^c\bnab_b Z_{ca}
+
V^b \bR_{b~c}^{~c\ d}Z_{da}
+
V^b \bR_{b~a}^{~c\ \ d}Z_{cd}
\mod(\bnab V)
\\
&\equiv& 
V^b \bRic_{b}^{\ d}Z_{da}
+
V^b \bR_{b~a}^{~c\ \ d}Z_{cd}
\mod(\bnab V)
\\
&\equiv& 
V^b\, \bR_{a~b}^{~d\ \ c}Z_{cd}
\mod(\bnab V,\bnab E ),
\end{eqnarray*}
because of equation \eqref{eq1a}. The term $V^b\, \bR_{a~b}^{~d\ \ c}$ however is a linear expression in the second and first covariant derivatives of $V$, and hence the claim follows.
\eprf
%
%
\subsection{Reformulation of the PDEs in terms of differential operators}
\label{wavesubsec}
Now we want to use the PDEs derived in the previous section as a ``wave equation'', i.e., in terms of a differential operator involving $\Delta$.
We introduce the following vector bundle over $\mathcal{U}_p$:
\begin{eqnarray*}
{\mathcal{E}}
&:= &
(T^*\M\otimes T\M )\+ T^*\M \+ T^*\M\+ T^*\M
\end{eqnarray*}
The vector bundle ${\mathcal E}$ carries a covariant derivative naturally induced by $\bnab$ and denoted by the same symbol. Moreover, there is an operator $\Delta$ of Laplace type on ${\mathcal E}$ which is given by taking the Bochner-Laplacian $\Delta$ in each summand and letting this operator act diagonally on sections, i.e.,
 \begin{align*} 
\Delta = \begin{pmatrix} \Delta & & \\ & \ddots & \\ & & \Delta
\end{pmatrix}.
 \end{align*}
 Using the solutions of the previous section we define the sections $\eta\in \Gamma (\cal E|_{\cal U})$ and $\xi\in \Gamma (T^*\cal U)$ by
 \begin{equation}
 \label{zd4}
\eta := \left(\bnab V,E,\bnab_V E,  (\bnab E)(V)\right), \qquad
\xi:= \delta^{\bg} L
 \end{equation}
 Combining the equations in Lemmas \ref{1stelem}, \ref{nablaelem}, \ref{x3lem}, \ref{x2lem} and \ref{x1lem} we obtain
 \begin{Proposition}
The sections  $\eta$ and $ \xi$ as defined in \eqref{zd4} solve the coupled \textit{linear} PDE
 \begin{eqnarray}
 \Delta\eta &= & F(\eta,\bnab\eta,\xi),  \label{fauch} \\
 \bnab_V\xi &=&H(\eta,\bnab \eta) , \label{fauch2}
 \end{eqnarray}
 where $F$ and $H$  are certain sections of ${\mathcal E}|_{\mathcal U}$ and $T^*\cal U $ which depend {\em linearly} on the indicated quantities. 
 \end{Proposition}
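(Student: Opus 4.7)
The plan is to obtain the two asserted identities directly from the five preceding lemmas after some bookkeeping on what counts as a linear expression in $(\eta, \bnab\eta, \xi)$. All data $\bg, V, \Z, E$ coming from the solution of the symmetric hyperbolic system of Section~\ref{sec3} are treated as fixed smooth coefficients; ``linear'' in the proposition means linear in the bundle-valued unknowns $\eta, \bnab\eta, \xi$ with coefficients built tensorially from this constructed data and its curvature.

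For the equation $\Delta\eta = F(\eta, \bnab\eta, \xi)$, I will assemble the four component equations. Two of them are immediate: \Lem{nablaelem}, written with $\xi := \delta^{\bg} L$, already reads $\Delta E = \bRic(E^{\sharp},\cdot) + 2\xi$, which is linear in $E\in\eta$ and $\xi$, while \Lem{x2lem} yields $\Delta((\bnab E)(V))$ as a linear combination of $E, \bnab E, \bnab V$, which again lie in $\eta$ and $\bnab\eta$. The equation for $\Delta\bnab V$ from \Lem{1stelem} requires a short Leibniz-rule manipulation: using
\[
V^a \bnab_c \bnab_b E_a = \bnab_c\bigl((\bnab E)(V)\bigr)_b - (\bnab_c V^a)(\bnab_b E_a),
\]
the term $(\bnab\bnab E)(V)$ is rewritten as a component of $\bnab\eta$ (since $(\bnab E)(V)\in\eta$) plus a product in which $\bnab V \in \eta$ acts as a smooth coefficient multiplying $\bnab E\in\bnab\eta$. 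An analogous identity starting from $\bnab_V E\in\eta$ takes care of $\bnab\bnab_V E$, and commuting covariant derivatives, absorbing the resulting curvature into the coefficients, handles $\bnab_V\bnab E$.

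The only subtle point is \Lem{x3lem}, which produces $\bnab_V\xi$ on the right-hand side of the equation for $\Delta(\bnab_V E)$. Since $F$ may not involve derivatives of $\xi$, the approach is to substitute the transport equation \eqref{fauch2} itself; this transport equation is exactly \Lem{x1lem}, once one recognises that $\bnab\bnab V\in\bnab\eta$ and $\bnab E\in\bnab\eta$, which gives a valid $H(\eta,\bnab\eta)$. Substituting $\bnab_V\xi = H(\eta,\bnab\eta)$ into the $\bnab_V E$-component of $\Delta\eta$ closes the system. Because the Bochner Laplacian on $\mathcal{E}$ is the diagonal operator acting by $\Delta$ on each summand, the four component identities assemble into a single bundle equation $\Delta\eta = F(\eta,\bnab\eta,\xi)$.

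The hard part is purely bookkeeping: one has to verify, for each symbolic ``mod'' expression appearing in the preceding lemmas, that every factor is either a component of $\eta$, a component of $\bnab\eta$, a component of $\xi$, or a smooth tensor built from the already-constructed data and its curvature. No new analytic input is needed, and no differential identity beyond the Leibniz rule, the standard curvature commutator of covariant derivatives, and the substitution of \eqref{fauch2} is used.
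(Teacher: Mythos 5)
Your proposal is correct and takes essentially the same route as the paper, which obtains the Proposition simply by combining Lemmas~\ref{1stelem}, \ref{nablaelem}, \ref{x3lem}, \ref{x2lem} and \ref{x1lem}: your Leibniz/commutator bookkeeping rewriting the terms $(\bnab\bnab E)(V)$, $\bnab\,\bnab_V E$ and $\bnab_V\bnab E$ as components of $\bnab\eta$ plus terms with fixed coefficients, and the substitution of Lemma~\ref{x1lem} for $\bnab_V\xi$ in the $\bnab_V E$-component, is exactly the intended content. Your reading of ``linear'' (coefficients built tensorially from the already constructed data $\bg$, $V$, $E$, $\Z$ and their curvature, the unknowns being $\eta$, $\bnab\eta$, $\xi$) matches the paper's mod-notation and is precisely what the subsequent uniqueness argument for the linear symmetric hyperbolic system requires.
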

Now suppose that $\eta$ and $\xi$ are arbitrary sections of ${\mathcal E}|_{\mathcal U}$ and $T^*\cal U$ and interpret the left hand side of \eqref{fauch} as a linear differential operator acting on these sections. Moreover, we trivialise the bundles ${\mathcal E}|_{\mathcal U}$ and $T^*\cal U$ with respect to the fixed coordinates $(x^0,...,x^n)$ on $\mathcal{U}$ and view in terms of this identification $\eta \in C^{\infty}(\mathcal{U},\mathbb{R}^{N})$, where $N =n^2+3n$, and $\xi\in C^\infty(\cal U,\rr^{n+1})$.

 \begin{Proposition}
 In the fixed local trivialisation, equations \eqref{fauch} and \eqref{fauch2} imply a \textit{linear} symmetric hyperbolic first order PDE
 \begin{align}
 A^0(t,x,\eta,\partial\eta,\xi) \partial_0 \begin{pmatrix}\eta \\ \partial\eta \\\xi\end{pmatrix} = A^i(t,x,\eta,\partial\eta,\xi) \partial_i \begin{pmatrix}\eta \\ \partial\eta \\\xi\end{pmatrix} + b(t,x,\eta,\partial\eta,\xi) \label{zd3}
 \end{align}
for $\eta$ and $\xi$, i.e., $b$ depends linearly on $(\eta,\partial\eta,\xi)$.
 \end{Proposition}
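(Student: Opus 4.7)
\bprf[Proof proposal]
The plan is to mimic the standard reduction of a wave operator to a first order symmetric hyperbolic system, applied blockwise --- to the wave equation \eqref{fauch} for $\eta$ and trivially to the already first order equation \eqref{fauch2} for $\xi$. The background metric $\bg$ and vector field $V$ are now fixed by the construction of Section~\ref{sec3} and enter only as smooth coefficients depending on $(t,x)\in \mathcal{U}$; in particular they contribute no nonlinearity. Combined with the linearity of $F$ and $H$ in $(\eta,\bnab\eta,\xi)$, this will ensure that the resulting system is linear and that all couplings through $b$ are linear.

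For the wave equation \eqref{fauch}, I would proceed exactly as in the reduction of \eqref{eq11} to the system \eqref{f1}--\eqref{f3}. Trivialising the bundle ${\mathcal E}|_{\cal U}$ by the coordinates, the operator $\Delta=\bnab^\mu\bnab_\mu$ reads
\[
\Delta\eta \;=\; \bg^{\mu\nu}\partial_\mu\partial_\nu\eta \;+\; \Gamma\cdot\partial\eta \;+\; \tilde{\Gamma}\cdot\eta,
\]
where the matrices $\Gamma,\tilde\Gamma$ depend on $\bg$ and its first derivatives. Introducing the auxiliary unknowns $p_\mu:=\partial_\mu\eta$, equation \eqref{fauch} is equivalent to the system
\begin{align*}
\partial_t\eta &= p_0,\\
\bg^{ij}\partial_t p_i &= \bg^{ij}\partial_j p_0,\\
-\bg^{00}\partial_t p_0 &= 2\bg^{0i}\partial_i p_0 + \bg^{ij}\partial_i p_j - \Gamma\cdot p - \tilde\Gamma\cdot\eta + F(\eta,p,\xi),
\end{align*}
together with the initial condition $p_i|_{t=0}=\partial_i\eta|_{t=0}$ (which propagates by the second equation). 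The principal part has the same block structure as $A_1^0,A_1^i$ in the proof of Theorem~\ref{funpde}: the block is symmetric, and since $\bg^{00}<0$ and $\bg^{ij}$ is positive definite (as $\bg$ stays in $\mathcal{G}_p$), the $\partial_t$-block is positive definite.

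For \eqref{fauch2}, since $V$ is non-vanishing with non-trivial $\partial_t$-component (by the decomposition $V=u(T-N)$ with $u>0$ and $T=\widetilde\lambda^{-1}\partial_t$), we have $V^0=u/\widetilde\lambda>0$. In coordinates \eqref{fauch2} reads
\[
V^0\,\partial_t\xi \;=\; -V^i\,\partial_i\xi \;+\; (\text{linear in }\xi) \;+\; H(\eta,p),
\]
which is itself symmetric hyperbolic with $A^0_{\xi}=V^0\cdot\mathrm{Id}$ positive definite and $A^i_{\xi}=-V^i\cdot\mathrm{Id}$ symmetric. Stacking the unknowns into $w=(\eta,p,\xi)$, the two subsystems combine into a block diagonal system of the form \eqref{zd3}; the matrices $A^0,A^i$ are symmetric, $A^0$ is positive definite, and their dependence is entirely through the fixed background $(t,x)$. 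The inhomogeneity $b$ gathers the Christoffel terms acting on $(\eta,p,\xi)$ and the sources $F$ and $H$, all of which are linear in $(\eta,p,\xi)$; hence $b$ is linear in the unknowns and the system is linear.

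The only point that needs care is the verification that no quadratic couplings sneak in: the operator $\Delta$ on ${\mathcal E}$ is a fixed second order operator on a fixed bundle with a fixed connection, so its reduction is purely algebraic in $(\eta,p)$, and the right hand sides $F,H$ were established to be linear in Lemmas~\ref{1stelem}--\ref{x1lem}. I view this bookkeeping --- rather than any analytical step --- as the main (and only) obstacle, and it is routine given the preceding sections.
\eprf
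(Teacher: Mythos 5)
Your proposal is correct and follows essentially the same route as the paper: reduce the normally hyperbolic operator $\Delta - F$ to a first order system by introducing $\partial_t\eta$ and $\partial_i\eta$ as new unknowns exactly as in \eqref{ff1}--\eqref{ff3}, treat the transport equation \eqref{fauch2} coordinate-wise as a symmetric hyperbolic system with $V^0>0$, and stack the two blocks, with linearity of $b$ coming from the linearity of $F$ and $H$ and from the fact that $\bg$, $V$ enter only as fixed smooth coefficients.
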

 
 \bprf
 The proof uses only that the linear second order operator
 \begin{align}
 P:=\Delta -F (\cdot,\cdot,\xi) \label{n1}
 \end{align} acting on $\mathcal{E}|_{\mathcal U}$ is  \textit{normally hyperbolic} for each $\xi$. 
 In general, given any tensor bundle $\mathcal{E} \rightarrow \mathcal{U}$ trivialised by the coordinates $x^i$, i.e. $\mathcal{E} \cong \mathcal{U}_p \times \mathbb{R}^N$,  and any linear second order differential operator $P: \Gamma(\mathcal{E}) \rightarrow \Gamma(\mathcal{E})$, we say that $P$ is normally hyperbolic if its principal symbol is given by the metric, i.e. in the local trivialisation
\begin{align*} P = -  \bg^{\mu \nu}(p) \frac{\partial^2}{\partial_{\mu} \partial_{\nu}} + M^{\mu}(p) \frac{\partial}{\partial x^{\mu}} + K(p) 
\end{align*}
for matrix-valued coefficients $M_{\mu}$ and $K$ depending smoothly on $p$. Note that in our case  the term $F$ in \eqref{n1} only affects the matrices $M$ and $K$ but not the symbol. If $\eta=(\eta_1,...,\eta_N)   \in C^{\infty}({\mathcal{U}_p},\mathbb{R}^N)$ is arbitrary, the equation $P \eta = 0$ can be rewritten as linear first order equation by applying formally the same steps as before when \eqref{eq11} was rewritten as first order equation: For $A=0,...,N$ we introduce the
 quantities $k_{A}:= \partial_t \eta_{A}$ and $\eta_{A, i}:= \partial_{i} \eta_{A}$. In terms of these quantities, $P\eta=0$ implies that
\begin{align}
\partial_t \eta_{A} &= k_{A}, \label{ff1} \\
\bg^{ij} \partial_t \eta_{A, i} &= \bg^{ij} \partial_{i} k_{A}, \label{ff2} \\
- \bg^{00} \partial_t k_{A} &= 2 \bg^{0 j} \partial_{j} k_{A} + \bg^{ij} \partial_{j} \eta_{A, i} +H_0^{A B} k_{B}+  \sum_{i=1}^{n} H^{A B}_{i} \eta_{B, i} + K^{A B}\eta_{B} , \label{ff3}
\end{align}
holds\footnote{This system is even equivalent to $P\eta=0$. Indeed, let a triple $(\eta_{A},k_{A},\eta_{A,i})$ solve \eqref{ff1}-\eqref{ff3}. As $g^{ij}$ is invertible for $g$ sufficiently close to $h$, \eqref{ff2} is the same as $\partial_t \eta_{A,i}=\partial_{i}k_{A}$, and \eqref{ff1} then gives $\partial_t (\eta_{A,i}-\partial_{i} \eta_{A})=0$. Appropriate choice of initial data  ensures $\eta_{A,i}=\partial_{i}\eta_{A}$ at $t=0$ and thus equality everywhere. Then \eqref{ff3} is nothing but $P \eta = 0$.}.
Equations \eqref{ff1}-\eqref{ff3} applied to our operator \eqref{n1} and sections $\zeta_i$ yield
 \begin{align} 
 A_1^0(t,x) \partial_0 \begin{pmatrix}\eta \\ \partial\eta \end{pmatrix} =  A_1^i(t,x) \partial_i \begin{pmatrix}\eta \\ \partial\eta \end{pmatrix} + b_1(t,x,\eta,\partial\eta,\xi). \label{zd2}
 \end{align}
It is easy to read off an explicit form of the matrices $A^{\mu}_1$ and to see that they are symmetric and that $A^0_1$ is positive definite as $\bg \in \mathcal{G}_p$. 

We turn to equation \eqref{fauch2}.
We write $\xi = \xi^{\mu} \partial_{\mu}$ and $V = u_t(T-N_t) = u_t(\frac{1}{\sqrt{-\bg(\partial_t,\partial_t)}}\partial_t - N_t^i \partial_i)$. In terms of these quantities, equation  \eqref{fauch2} is equivalent to
\begin{align} \label{zd1}
 \partial_t (\xi^{\mu})_{\mu=0,...,n} = \sqrt{-\bg(\partial_t,\partial_t)} N_t^i   \partial_i (\xi^{\mu})_{\mu = 0,...,n} + b_2(t,x,\eta,\partial\eta,\xi)
\end{align}
where  $b_2$ depends linearly on $(\eta,\partial\eta)$ via $H$ and linearly on $\xi$ via  contractions of $\xi$ with Christoffel symbols for $\bg$ which results from writing $\nabla = \partial + \Gamma$. Combining \eqref{zd2} and \eqref{zd1} gives \eqref{zd3}.
 \eprf

\subsection{Initial data and the vanishing of $\bnab V$ and $E$}
In this section we will show that $E$ and $\bnab V$ vanish everywhere on $\cal U$ and that $V$ is a null vector field.
We will achieve this by showing that 
the data $\eta$, $\xi$ and $\bnab\eta$  as defined in \eqref{zd4}, and containing the tensors $\bnab V$ and $E$,
 vanish on $\cal U$. 
 The data $\eta$, $\xi$ and $\bnab\eta$  were 
solutions of the  linear system \eqref{fauch} and \eqref{fauch2}. Hence, using the uniqueness result for solutions to \eqref{zd3}, it suffices to show that  $\eta$, $\xi$ and $\bnab\eta$ vanish on $\M_p$ (which for simplicity we will denote by $\M$ in the following) in order to obtain  that $\bnab V$ and that $E=0$. Moreover we show that $V$ is null on $\M$ which will imply, by $V$ being parallel, that  it  $V$ is null everywhere.

\begin{Proposition}
The vector field $V$ defined in equation \eqref{Vdef} and the sections defined in  equation \eqref{zd4} of Section \ref{wavesubsec} satisfy equations along the initial hypersurface $\M$,
\begin{equation}
 \label{nulls}
\bg(V,V)|_\M=0,\qquad \eta|_{\M} = 0, \qquad
\bnab_T\eta|_{\M} = 0, \qquad
\xi|_{\M} = 0.
\end{equation}
In particular, $\bnab V$  and $E$ vanish on $\M$.
\end{Proposition}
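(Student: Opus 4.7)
The plan is to verify each of the four equalities in \eqref{nulls} using the constraint \eqref{cons}, the equations \eqref{eq2}--\eqref{eq4}, the Gauss--Codazzi identities \eqref{hsf}, and the initial data prescribed in Section~\ref{idat}. The null condition $\bg(V,V)|_\M=0$ is immediate from the initial datum $\alpha|_{\M}=h(\tfrac{u}{\lambda}\partial_t-U,\cdot)|_{\M}$ together with $h=-\lambda^2dt^2+\g$: one reads off $V|_\M=uT-U$ and obtains $\bg(V,V)|_\M=-u^2+\g(U,U)=0$. The equality $E|_\M=0$ is built into the choice of $k_{0\mu}|_\M$ in \eqref{drop} by the standard hyperbolic reduction as in \cite{Ringstrom09}. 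To establish $\bnab V|_\M=0$, I decompose $V=uT-U$ and, for $X\in T\M$, compute $\bnab_XV|_\M$ using the Gauss formula $\bnab_XY=\nabla_XY-\W(X,Y)T$ and the Weingarten identity $\bnab_XT=-\W(X)$: the tangential part vanishes by the constraint $\nabla_XU=-u\W(X)$, while the normal part vanishes by the identity $X(u)=-\W(X,U)$ obtained from differentiating $u^2=\g(U,U)$ and reinvoking the constraint. For $\bnab_TV|_\M$, the first-order equation $(d+\delta^{\bg})V^\flat=0$ translates to $\bnab V^\flat$ being pointwise symmetric and $\bg$-trace-free, and combined with $\bnab_XV|_\M=0$ for $X\in T\M$ this forces $\bnab_TV|_\M=0$.

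Since $E|_\M=0$ throughout $\M$, the tangential covariant derivatives of $E$ vanish on $\M$; in particular $\bnab_UE|_\M=0$, whence $\bnab_VE|_\M=u\,\bnab_TE|_\M$ and $(\bnab E)(V)|_\M$ is likewise determined by $\bnab_TE|_\M$ alone. Thus the remaining piece of $\eta|_\M=0$ reduces to $\bnab_TE|_\M=0$, which I extract from equation \eqref{treu} evaluated on $\M$: the left-hand side $\overline G|_\M$ is computed by the Gauss--Codazzi identities \eqref{hsf}, while on the right-hand side the only surviving entries of $\text{Sym}(\bnab E)|_\M$ are $(\bnab_TE)(T)|_\M$ and $(\bnab_TE)(X)|_\M$ with $X\in T\M$, and the initial datum \eqref{iv} for $Z|_\M$ was designed precisely so that both must vanish. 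The vanishing of $\xi|_\M=\delta^{\bg}L|_\M$ is a parallel computation: taking the divergence of \eqref{treu} and using the contracted Bianchi identity $\delta^{\bg}\overline G=0$ together with $E|_\M=0$ and $Z(V,\cdot)=0$ reduces the claim to an algebraic identity on $\M$ that is verified by direct substitution of \eqref{iv}.

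Finally, $\bnab_T\eta|_\M=0$ is proved componentwise. The case $\bnab_TE|_\M=0$ has already been treated, and the components $\bnab_T\bnab_VE|_\M$ and $\bnab_T\bnab E(V)|_\M$ then follow from it together with $E|_\M=0$ and $\bnab V|_\M=0$ by the Leibniz rule. For $\bnab_T\bnab V|_\M=0$, I commute covariant derivatives: for $X\in T\M$, $\bnab_T\bnab_XV|_\M=\bnab_X\bnab_TV|_\M+\bR(T,X)V|_\M$, and the first term vanishes because $\bnab_TV|_\M=0$ holds pointwise throughout $\M$, while the curvature term $\bR(T,X)V|_\M$ is expanded via the Gauss--Codazzi--Mainardi equations in terms of intrinsic data and $\W$ and seen to vanish using the same bookkeeping as in the previous paragraph; the remaining cases in $\bnab_T\bnab_aV|_\M$ with $a=T$ are handled by differentiating the symmetry and trace-free relations encoded in $(d+\delta^{\bg})V^\flat=0$ in the $T$-direction. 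I expect the main technical obstacle to be the Gauss--Codazzi bookkeeping that justifies the initial datum \eqref{iv} for $Z|_\M$; once this is organised, the remaining verifications are essentially algebraic.
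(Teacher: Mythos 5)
Your handling of the zeroth and first order statements is essentially the paper's own argument and is fine: $\bg(V,V)|_\M=0$ and the tangential part of $\bnab V|_\M=0$ from \eqref{cons}, the normal part from differentiating $u^2=\g(U,U)$, and $\bnab_TV|_\M=0$ from the symmetry and trace-freeness of $\bnab V^\flat$ encoded in $(d+\delta^\bg)V^\flat=0$ (the paper phrases this via Clifford multiplication by $c(T)$, but your version is equivalent); likewise $E|_\M=0$ from \eqref{drop} and $\bnab_TE|_\M=0$ from \eqref{treu} together with \eqref{hsf}. One caveat already here: for the $00$-component the statement ``\eqref{iv} was designed so that it must vanish'' hides a genuine computation --- it is the identity \eqref{grz}, whose proof needs the constraint, the derived relations $\nabla_iu=-uN^k\W_{ik}$, and a commutation of derivatives of $U$ producing the Ricci term that cancels against the $\Ric$- and $\R(X,N,N,Y)$-terms in the second line of \eqref{iv}; it is not a substitution one can wave through.

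The genuine gaps are in the second-order claims, which is where the real content of the Proposition lies. First, $\xi|_\M=0$ does \emph{not} reduce to ``an algebraic identity verified by direct substitution of \eqref{iv}'': taking the divergence of \eqref{treu} gives \eqref{nablae}, i.e.\ $2\delta^\bg L=\Delta E-\bRic(E^\sharp,\cdot)$, and on $\M$, where $E$ and all first derivatives of $E$ vanish, $\Delta E=-\bnab_T\bnab_TE$ --- a second \emph{transverse} derivative that is not determined by the initial data. The paper obtains it by propagation: differentiating \eqref{eq1a} along $V$, using $\bnab_V\Z\equiv0\bmod(\bnab V)$ (from \eqref{eq3}, \eqref{eq4}) and the second Bianchi identity together with the curvature identity \eqref{soll}, one gets $\bnab_0\bnab_0E_j=0$, then \eqref{nablae} gives $(\delta^\bg L)_i=0$, and the remaining component $(\delta^\bg L)_0$ needs the separate \Lem{x1lem} (it cannot be read off from the divergence of \eqref{treu}). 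For the same reason, $\bnab_T(\bnab_VE)|_\M$ and $\bnab_T\bigl((\bnab E)(V)\bigr)|_\M$ do \emph{not} ``follow by the Leibniz rule'': both contain $V^a\bnab_T\bnab_aE$, i.e.\ exactly the unknown $\bnab_T\bnab_TE$, so they require the full argument above. Second, in your treatment of $\bnab_T\bnab_XV|_\M=\bR(T,X)V|_\M$, the Gauss--Codazzi--Mainardi equations cannot ``expand'' the components $\bR(T,X,T,Y)$ with two normal entries --- these are not functionals of the initial data. The paper eliminates them by invoking the solved evolution equation \eqref{eq1a}: since $\bnab E|_\M=0$, one has $\bRic=\Z$ and hence $V^a\bRic_{ab}=0$ on $\M$ (a step absent from your outline), which handles the $\bR_{0aij}V^a$- and $\bR_{0ai}{}^{0}V^a$-components, and converts the remaining condition $V^a\bR_{ia0j}=0$ --- via the Gauss equation --- into precisely the second line of \eqref{iv}. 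Without bringing the evolution equation into play at this point the argument cannot close; moreover the order matters, since \eqref{soll} is itself an input to the $\bnab_T\bnab_TE$ step. So the proposal is sound up to first order, but the core second-order part (which is what $\bnab_T\eta|_\M=0$ and $\xi|_\M=0$ really assert) is missing the key mechanism.
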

\bprf
In this proof all equations are understood as being evaluated on $\M$, more precisely on $\M_p = \M \cap \mathcal{U}_p$ only, i.e. we do not always write the restriction $|_\M$ after each expression here. Recall that $\eta$ and $\xi $ were defined as
\[
\eta = \left(\bnab V,E,\bnab_V E,  (\bnab E)(V)\right), \qquad
\xi= \delta^{\bg} L,\quad\text{ with }L= \Z - \tfrac{1}{2}\mathrm{tr}_{\bg}( \Z) \, \bg
\]
In the following, the order in which the initial conditions are verified turns out to be very important. First note that along $\M$ we have that $\bg=h$, where $h$ is the background metric, and hence that $T\M$ is orthogonal to $T$.  Moreover, we will not distinguish between $E$ and $E^\sharp$.
It follows from the identity \eqref{ij1} and initial data for $V$, i.e. from
\begin{align}
V|_{\M} = uT - U \label{lich},
\end{align} that the imposed constraint equation \eqref{cons} is equivalent to $\mathrm{pr}_{T\M}(\bnab_X V)|_{\M} = 0$, i.e., to
\[\bg(\bnab_X V,Y)|_\M=0,\qquad\text{ for $X,Y\in T\M$.}\]  Moreover, equation \eqref{lich} implies that
\begin{align}
\bg(V,V)|_{\M} = 0. \label{tr}
\end{align}
Differentiating this in direction of $X \in T\M$ yields
\[
0 =  \bg(\bnab_X V, V)  
=  u\bg(\bnab_XV, T),
\]
from which follows that $\bnab_X V = 0$ on $\M$ for $X \in T\M$. The evolution equation $(d+\delta) V^{\flat} = 0$ reduces then on $\M$ to 
\begin{align*}
c(T) \circ \bnab_{T} V^{\flat} = 0.
\end{align*}
Multiplying this from the left with $c(T)$ yields that $\bnab_T V =0$ on $\M$ and hence that 
\begin{equation}\label{nabvm}\bnab V|_\M=0.\end{equation} Moreover, the initial data for $\bg$ were chosen in Subsection \ref{idat} precisely in such a way that 
\begin{equation}\label{em}
E|_\M=0.\end{equation}
This also implies 
that  \begin{equation}\label{nabetm}
\bnab_{X} E|_{\M}=0,\text{ for $X\in T\M$}.
\end{equation}
Showing that the remaining quantities in $\eta,$ $\bnab\eta$ and $\xi$ vanish along $\M$ is rather involved. Again for brevity, we will use abstract index notation with indices $a,b,c, \ldots $ ranking from $0$ to $n$. We will however abuse this abstract index notation as indicated before,  when writing a $0$ for a contraction $B(T, \ldots) $ of a tensor $B$ with the vector field $T$,
\[T^aB_{ab\ldots}=B_{0b\ldots},\]
but also when using indices $i,j,k,\ldots$ ranging from $1$ to $n$ and referring to directions in $T\M$. Since  along $\M$ the vector field $T$ is orthogonal to $T\M$ we have that $\bg_{0i}=h_{0i}=0$, as well as $\bg_{00}=-1$ and $\bg_{ij}=\g_{ij}$.

We will start by showing that the  initial data specified for $\Z$ imply that  $\bnab_T E$ vanishes on $\M$, i.e., that $\bnab_0 E_a=0$ along $\M$.
Starting point is equation \eqref{treu}, which in indices reads as
\begin{equation}\label{treu-ind}
\overline{G}_{ab}=-\bnab_{(a}E_{b)}+Z_{ab}+\tfrac{1}{2}(\bnab_cE^c-Z_{c}^{~c}) \bg_{ab}
.\end{equation}
Evaluation on $\M$ using the hypersurface formula \eqref{hsf},
\[
\overline{G}_{0i} =
\nabla_k \W^k_{~i}+\bnab_i\W_k^{~k},\]
implies that 
\begin{equation} \label{exp}
\tfrac{1}{2}\bnab_0E_i -\tfrac{1}{2}\bnab_iE_0
\ =\ 
-\nabla_k \W^k_{~i}+\bnab_i\W_k^{~k}-Z_{0i} 
\ =\ 
-\nabla_k \W^k_{~i}+\bnab_i\W_k^{~k}-\tfrac{1}{u}U^k\Z_{ki}
\ =\ 0,
\end{equation}
because of 
$0=\Z(V,\cdot) = u\Z(T,X) -\Z(U,X)$ and  the first  initial condition in \eqref{iv} for $\Z$. But now $E_a$ is zero along $\M$ and hence is $\bnab_iE_0$, and so we obtain that  
\begin{equation}
\nabla_{0}E_i = 0. \label{67}
\end{equation}
Hence, it remains to prove that also 
$\bnab_0E_0
{=} 0$.

To this end, recall the hypersurface formula \eqref{hsf} contracted with $T$ twice,
\[\overline{G}_{00}=\tfrac{1}{2}(\scal^\g - \W_{ij}\W^{ij}+(W_i^{~i})^2),\]
and the above formula \eqref{treu-ind} to obtain 
\[
\begin{array}{rcl}
\tfrac{1}{2}(\scal^\g - \W_{ij}\W^{ij}+(W_i^{~i})^2)
&=& - \bnab_0E_0 + \Z_{00}
-
\tfrac{1}{2}\bnab_c E^c +\tfrac{1}{2}\Z_c^{~c}
\\[2mm]
&=&
-\tfrac{1}{2} \bnab_0E_0 +\tfrac{1}{2} \Z_{00}
-
\tfrac{1}{2}\bnab_k E^k +\tfrac{1}{2}\Z_k^{~k}.
\end{array}\]
Hence, using again $\Z(V,.)=0$ and $\bnab_{i}E_j=0$ along $\M$, we get
\begin{equation}
\label{78}
\tfrac{1}{2} \bnab_0E_0\ =\ 
\tfrac{1}{2}N^i N^j \Z_{ij}
+\tfrac{1}{2}\Z_k^{~k}
-\tfrac{1}{2}(\scal^\g - \W_{ij}\W^{ij}+(W_i^{~i})^2).
\end{equation}
The next lemma shows that this term vanishes:
\begin{Lemma}
On $\M$ satisfying the constraint \eqref{cons}  it holds that
\begin{equation}
\scal^\g - \W_{ij}\W^{ij}+(W_k^{~k})^2
\ =\ 
 N^i N^j \Z_{ij}
+\Z_k^{~k}.
\label{grz}
\end{equation}
%
%
%
\end{Lemma}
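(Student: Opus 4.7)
The plan is to verify \eqref{grz} by evaluating both $\Z(N,N)$ and $\tr_\g\Z$ directly from the initial conditions \eqref{iv} and showing that the combination $\Z(N,N)+\tr_\g\Z$ collapses onto the Gauss-type expression on the left-hand side. Note that the left-hand side is precisely $2\overline{G}(T,T)$ appearing in the hypersurface formula \eqref{hsf}, which makes the identity geometrically reasonable, but the proof is purely intrinsic to $\M$.

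First I would translate the first condition $\Z(U,\cdot)=u(d(\tr_\g\W)+\delta^\g\W)$ of \eqref{iv} into a curvature statement. Starting from the constraint $\nabla_X U=-u\,\W(X)$, one differentiation gives $du(X)=-u\,\W(X,N)$, and commuting two covariant derivatives yields
\[
R(X,Y)U = -du(X)\,\W(Y)+du(Y)\,\W(X)-u\,(\nabla_X\W)(Y)+u\,(\nabla_Y\W)(X).
\]
Tracing over $X$ in an orthonormal frame produces a Codazzi-type identity
\[
(d(\tr_\g\W)+\delta^\g\W)(Y) \;=\; \Ric(Y,N)-\W^2(Y,N)+\W(Y,N)\,\tr_\g\W,\qquad Y\in T\M,
\]
which rewrites the first condition of \eqref{iv} as $\Z(N,Y)=\Ric(Y,N)-\W^2(Y,N)+\W(Y,N)\,\tr_\g\W$ for every $Y\in T\M$. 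Setting $Y=N$ gives $\Z(N,N)=\Ric(N,N)-\W^2(N,N)+\W(N,N)\tr_\g\W$, which, pleasingly, agrees with what one would obtain by formally plugging $X=Y=N$ into the second formula of \eqref{iv}, confirming that the two initial conditions are consistent on the diagonal.

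Next I would work in a local $\g$-orthonormal frame $(e_1=N,e_2,\dots,e_n)$ of $T\M$, so that $e_2,\dots,e_n\in U^\perp$, and expand $\sum_{\alpha=2}^n\Z(e_\alpha,e_\alpha)$ term-by-term using the second formula in \eqref{iv}. The relevant contractions simplify as $\sum_{\alpha\geq 2}\Ric(e_\alpha,e_\alpha)=\scal^\g-\Ric(N,N)$, $\sum_{\alpha\geq 2}\R(e_\alpha,N,N,e_\alpha)=\Ric(N,N)$ (since $\R(N,N,N,N)=0$), $\sum_{\alpha\geq 2}\W^2(e_\alpha,e_\alpha)=\W_{ij}\W^{ij}-\W^2(N,N)$, $\sum_{\alpha\geq 2}\W(e_\alpha,N)^2=\W^2(N,N)-\W(N,N)^2$, with analogous reductions for the cross terms containing $\tr_\g\W$ and $\W(N,N)$.

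Finally, the identity $\Z(N,N)+\tr_\g\Z=2\Z(N,N)+\sum_{\alpha=2}^n\Z(e_\alpha,e_\alpha)$ is evaluated by substituting the two previous steps. All occurrences of $\Ric(N,N)$, $\W^2(N,N)$, $\W(N,N)\tr_\g\W$ and $\W(N,N)^2$ cancel identically, leaving precisely $\scal^\g-\W_{ij}\W^{ij}+(\tr_\g\W)^2$, which is \eqref{grz}. I expect the main obstacle to be purely bookkeeping, since six distinct contractions must combine to zero; the only conceptual ingredient is the Codazzi-type identity of the first step, which is the sole point at which the constraint \eqref{cons} enters.
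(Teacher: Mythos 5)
Your proposal is correct and follows essentially the same route as the paper: both proofs evaluate $N^iN^j\Z_{ij}$ and $\Z_k^{~k}$ from the two initial conditions \eqref{iv} and add them, with the constraint \eqref{cons} entering only through commuting covariant derivatives of $U$. Your intermediate Codazzi-type identity $(d(\mathrm{tr}_{\g}\W)+\delta^{\g}\W)(Y)=\Ric(Y,N)-\W^2(Y,N)+\W(Y,N)\,\mathrm{tr}_{\g}\W$ is just a clean repackaging of the paper's direct computation of $N^iN^j\Z_{ij}$ via $\W=-\tfrac{1}{u}\nabla U$, \eqref{norm} and \eqref{nabN}, and your frame-wise trace reproduces exactly the paper's expression for $\Z_k^{~k}$.
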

\bprf
On $\M$ we have $N = \frac{1}{u}U$ with $u^2=\bg(U,U)$. 
An easy consequence this and of the constraint \eqref{cons}, i.e., of  $\nabla_iU_j+u\W_{ij}=0$, is the formula
\begin{align}
\nabla_i u =  -u N^k\W_{ik}, \label{norm}
\end{align} 
and the resulting
\begin{equation}\label{nabN}
\nabla_iN_j=
N^k\W_{ki}N_j-\W_{ij}.
\end{equation}
Now we use both initial conditions \eqref{iv} for $\Z$ to first determine 
\[
\Z_k^{~k}
=
 \mathrm{scal}^{\g} -\W_{ij}\W^{ij}  +  (\W_k^{~k})^2 +
 N^iN^j\left(
2 \W_{ki}\W^{k}_{~j}  -2\Ric_{ij} -2\W_{k}^{~k}  \W_{ij} 
+\Z_{ij}\right),
%
%
\]
and then compute, using  the constraint \eqref{cons} and equations \eqref{norm} and \eqref{nabN}, that
\begin{eqnarray*}
N^iN^j\Z_{ij}
&=&
N^i\nabla_i\W_j^{~j}
-N^i\nabla^j\W_{ij}+
\\
&=&
\tfrac{1}{u^2} N^i\nabla_iu \nabla_jU^j -\tfrac{1}{u} N^i\nabla_i\nabla_j U^j
-\tfrac{1}{u^2} N^i\nabla_ju \nabla_iU^j +\tfrac{1}{u} N^i\nabla_j\nabla_i U^j
\\
&=&
N^iN^k\W_{ik}\W_j^{~j} - N^k\W_{kj}\W_{i}^{~j}N^i
+N^iN^k\R_{jik}^{\ \ \ j}
\\
&=&
- N^iN^j\left( \W_{kj}\W_{j}^{~k}-
\W_{ij}\W_k^{~k}-\Ric_{ik}\right).
\end{eqnarray*}
Putting these two equations together gives the  desired equation \eqref{grz}.
\eprf
Hence we have established that $\bnab_0E_0=0$.
Combined with \eqref{67} this yields $
\bnab_{0}E |_\M= 0$.
It then follows automatically that $V^a\bnab_b E_a= V^a\bnab_a E_b = 0$ on $\M$ . Altogether we have now that 
\[\bnab_a E|_\M=0.\]
Then equation \eqref{eq1a} yields as immediate consequence  that
\begin{align}
V^a \bRic_{ab}|_{\M} = 0.
\end{align}
We turn to $\bnab_0 \bnab_i V^a$-terms for $X \in T\M$ and want to show that such expressions vanish on $\M$. As $\bnab V = 0$ on $\M$ we have that
\begin{align}\label{vanish}
\bnab_0 \bnab_i V^b = \bR_{0ia}^{\ \ \ b} V^a
\end{align}
on $\M$, and we have to show that this expression vanishes. Note that as a further consequence of $\bnab_a V = 0$ on $\M$ we have
\begin{align}\label{rvmm}
\bR_{ijb}^{\ \ \ a}V^b = 0
\end{align}
on $\M$. We will now prove that   $V^a\bR_{0abc} = 0$ on $\M$ for all $b,c=0, \ldots, n$:

First we note that 
\[
V^a\bR_{0aij} =- V^a\bR_{ija0}=0,\]
because of \eqref{rvmm}.
Next we use $V^a\bRic_{ab}=0$ to get
\[  V^a\bR_{0ai}^{\ \ \ 0}=-V^a\bRic_{ab}+V^a\bR_{jai}^{\ \ \ j}=0,\]
again because of \eqref{rvmm}. This implies $V^a\bR_{0abc} = 0$ on $\M$.

Hence, the vanishing or the term \eqref{vanish} 
 is equivalent to
\begin{align}
0=V^a\bR_{ia0j}
\label{soll}
\end{align}
for all $i,j=1,\ldots, n$.
In fact, because of  $V= u(T-N)$, it suffices to 
 prove \eqref{soll} for $X^i,Y^j \in N^{\perp} \subset T\M$, i.e., with $X^iN_i=Y^iN_i=0$. We use now indices $r,s,t=1, \ldots, n-1$ for tensors from $N^\perp$. Using this convention 
  we rewrite
\[
V^a\bR_{ra0s}=-uN^i\bR_{ri0s}+\bR_{r00s}
=
-uN^iN^j\bR_{rijs}-u\bRic_{rs}+u\bR_{ri\ s}^{\ \ i}.
\]
Because of $\bnab E=0$ and the resulting  
 $\Z = \bRic$ on $\M$, this means that 
 \eqref{soll} is verified if and only if
 \begin{align}
\Z_{rs} =
-N^iN^j\bR_{rijs}+\bR_{ri\ s}^{\ \ i}.
\label{g}
\end{align}
Now we use the Gau\ss\ equation
\begin{align*}
\bR_{ijkl}= \R_{ijkl}-\W_{i[k}\W_{l]j}
\end{align*}
in order to rewrite the curvature terms in \eqref{g} in terms of data on $\M$. The  rewritten equation \eqref{g} is precisely the defining initial condition for $\Z_{rs}$, i.e., equation \eqref{iv}. This proves \eqref{soll} and thus $\bnab_0 \bnab_i V^b = 0$. As $V^a\bRic_{ab} = 0$ on $\M$, we have that 
\[0 = \Delta^{HL}V^b= \Delta V^b
=\bnab_a\bnab^aV^b
=-\bnab_0\bnab_0V^b=0,
\]
because of $\bnab_i\bnab_jV^a=0$ on $\M$.

\bigskip

The last and most complicated part of the proof now consists of showing that $\delta^{\bg} L_a$ and $V^a \bnab_0 \bnab_a E_b$ vanish on $\M$. As a starting point, we take equation \eqref{eq1a}, 
\[\bRic_{ab} = \Z_{ab} - \bnab_{(a} E_{b)}.\]
Differentiating both sides covariantly in direction of $V$, using that $\bnab_aV^a\Z_{bc}=0$ along $\M$ due to \eqref{bz0}, yields 
\begin{equation}
\label{vric}
V^a\bnab_a\bRic_{bc}=
V^a\bnab_a \bnab_{(b} E_{c)}
=
V^a\bR_{a(b\ c)}^{\ \ \ d}E_d+V_a\bnab_{(b}\bnab^aE_{c)}.
\end{equation}
Setting $b=0$ and $c=j$ this and $E_d|_\M=0$  and $\bnab_aE_b|_\M=0$ gives
\begin{equation}\label{fit}
V^a\bnab_a\bRic_{0j}=\tfrac{1}{2}V^a\bnab_{0}\bnab_aE_{j}
=
\tfrac{u}{2}\bnab_0 \bnab_0 E_j - N^i\bnab_i \bnab_0 E_j = \tfrac{u}{2}\bnab_0 \bnab_0 E_j
.\end{equation}
%
We now show that $V^a\bnab_a\bRic_{0j}$ vanishes on $\M$: 

Using the second Bianchi identity we find
\begin{equation}\label{cod}
V^a\bnab_a\bRic_{0j}
=
V^a\bnab_a\bR_{b0j}^{\ \ \  \ b}
=
-V^a\bnab_a\bR_{ji0}^{\ \ \ \ i}
=
-
V^a\bnab_j\bR_{ia0}^{\ \ \ \ i}
-
V^a\bnab_i\bR_{aj0}^{\ \ \ \ i}=0,
\end{equation}
because of equation \eqref{soll} along $\M$ and differentiation is along $\M$.
%
Thus, \eqref{fit} gives
\begin{align}
\bnab_0 \bnab_0 E_j=0.
\label{fitt}
\end{align}
This as well as 
 $E_a=0$ and $\bnab_aE_b = 0$ on $\M$ that also 
 \[ \Delta E_i = -\bnab_a\bnab^a E_i=0
  \]  on $\M$. But then, equation \eqref{nablae} immediately yields
\begin{align*}
(\delta^{\bg} L)_i = 0
\end{align*}
for all $j=1, \ldots , n$.
Moreover, Lemma \ref{x1lem} says that $V^a(\delta^{\bg}L)_a$ is a linear expression in $\bnab_b V^d$ and $\bnab_b\bnab_c V^d$ terms, which vanish on $\M$. It follows that on $\M$
\begin{align*}
(\delta^{\bg}L)_0= \frac{1}{u}V^a(\delta^{\bg}L)_a + N^i(\delta^{\bg}L)_i = 0,
\end{align*}
and as a consequence, 
\[ \delta^{\bg} L = 0 \text{ on } \M. \]
Again using formula \eqref{nablae} shows now that $\bnab_0 \bnab_0 E_b = 0$ on $\M$, Inserting this into \eqref{cod} shows that 
\begin{align*}
V^a\bnab_0 \bnab_a E_b = 0.
\end{align*}
Hence, all covariant derivatives $\bnab_{a} \bnab_{b} E_c$ vanish, proving equations \eqref{nulls} in the proposition.
\eprf

\section{Proof of Theorem \ref{cauchy-vf-theo}: global aspects}\label{sec5}
\subsection{From local to global}
We next globalise the local development of the initial data. So far we have constructed for every $p \in \M$ the data $({\bg}^{\mathcal{U}_p},V^{\mathcal{U}_p},\Z^{\mathcal{U}_p})$ defined on some open $\mathcal{U}_p \subset \mathbb{R} \times \M$ sufficiently small. Let $p,q \in \M$ and assume that $\mathcal{U}_p \cap \mathcal{U}_q \neq \emptyset$. Choose coordinates $(x^0,...,x^n)$ and $(y^0,...,y^n)$ on $\mathcal{U}_p$ and $\mathcal{U}_q$ respectively as before. On $\mathcal{U}_p \cap \mathcal{U}_q$ we consider the coordinates given by restriction of the $x^i$. Then, with respect to these coordinates, the data
\begin{align*}
w_p = \left( {\bg}_{\mu \nu}^{\mathcal{U}_p} , \  V_{\mu}^{\mathcal{U}_p} ,\ \Z_{ij}^{\mathcal{U}_p} \right), \quad
w_q = \left( {\bg}_{\mu \nu}^{\mathcal{U}_q} ,\ V_{\mu}^{\mathcal{U}_q} ,\ \Z_{ij}^{\mathcal{U}_q} \right),
\end{align*}
 solve by construction the system \eqref{eq1a}-\eqref{eq3} formulated locally in the $x$-coordinates. This follows as these equations are manifestly coordinate invariant. Moreover the initial data $(w_p)_{|\M_p} = (w_q)_{|\M_q}$ coincide since they arise as restrictions of globally defined data on $\M$. It then follows from the uniqueness result for solutions of symmetric quasilinear hyperbolic systems that 
\begin{align} \label{upuq}
w_p = w_q \text{ in } \mathcal{U}_p \cap \mathcal{U}_q.
\end{align}
We now set 
\[\bM:= \cup_{p \in \M} \mathcal{U}_p \subset \mathbb{R} \times \M. \] 
As each $\bg^{\mathcal{U}_p}$ lies in $\mathcal{G}_p$, the $\bg^{\mathcal{U}_p}$ define a global Lorentzian metric on $\M$ on which $\partial_t$ is a timelike vector field. We equip $\M$ with the time orientation induced by $\partial_t$. By the previous local constructions, $(\M,\g)$ embeds into $(\bM,\bg)$ with Weingarten tensor $\W$. 
Moreover, by \eqref{upuq} the locally defined vector fields $V^{\mathcal{U}_p}$ give rise to a vector field $V \in \mathfrak{X}(\bM)$ which is parallel and of length zero as this holds locally. 

\subsection {$\M \subset \bM$ is a spacelike Cauchy hypersurface.} \label{rrr}
To this end, let $\gamma:I \rightarrow \bM = \cup_{p \in \M} \mathcal{U}_p$ be an inextendible timelike curve and let $t^* \in I$ be any fixed parameter. Let $p \in \M$ such that $\gamma(t^*) \in \mathcal{U}_p$. For such fixed $p$ we consider the restricted curve
\[ \gamma_{|\gamma^{-1}(\mathcal{U}_p)} : \gamma^{-1}(\mathcal{U}_p) \rightarrow \mathcal{U}_p, \]
which is an inextendible timelike curve in the globally hyperbolic manifold $(\mathcal{U}_p,g^{\mathcal{U}_p})$. Thus, the spacelike Cauchy hypersurface $\M_p \subset \M$ in $\mathcal{U}_p$ is met by $\gamma_{|\gamma^{-1}(\mathcal{U}_p)}$. It remains to show that $\gamma$ meets $\M$ at most once. With respect to  the splitting $\mathbb{R} \times \M$ we decompose
\[ \gamma = (\gamma_t, \gamma_{\M}) \]
and compute
\begin{align*}
0 > \bg(\dot \gamma_t \partial_t, \dot \gamma_t \partial_t) + 2 \cdot \bg(\dot \gamma_t \partial_t, \dot \gamma_{\M}) + \bg(\dot \gamma_{\M}, \dot \gamma_{\M}).
\end{align*}
Let us assume that there is $\tau \in I$ with $\dot \gamma_t(\tau) = 0$. Let $q:= \gamma(\tau)$. Then $0 > \bg^{\mathcal{U}_q}(\dot \gamma_{\M}(\tau), \dot \gamma_{\M}(\tau))$. This, however, contradicts the condition $\bg \in \mathcal{G}_q$ imposed on $\mathcal{U}_q$ and $\bg^{\mathcal{U}_q}$ in the second step of the proof. Consequently, $\gamma_t:I \rightarrow \mathbb{R}$ is strictly monotone, and thus $\gamma_t = 0$ has at most one solution. In total, $\gamma$ intersects $\M$ exactly once. It follows that $(\bM,\bg)$ is globally hyperbolic with Cauchy hypersurface $\M$ and parallel null vector $V$. 
\subsection{The metric is of the form $-\widetilde{\lambda}^2dt^2 + \g_t$}
Here we will prove the last aspect of Theorem~\ref{cauchy-vf-theo}, namely that the metric $\bg$ obtain in this way is of the form
\[\bg=-\widetilde{\lambda}^2dt^2 + \g_t.\]

Let $t$ denote the function $\bM \ni (t,x) \mapsto t$. By construction, the vector field $\text{grad}^{\bg}(t)$ is a global timelike vector field on $\bM$ and the leaves of  the integrable distribution $(\text{grad}^{\bg}(t))^{\perp}$ are the $t$-levels $\{t\} \times \M =: \M_t$. Let $F \in \mathfrak{X}(\bM)$ denote the vector field that is proportional to $\text{grad}^{\bg}(t)$ and such that $dt(F)\equiv 1$, i.e.,
\[F=\frac{1}{dt(\text{grad}^{\bg}(t))}\text{grad}^{\bg}(t),\]
and denote by $\phi$ its flow. Note that $\phi$ sends level sets to level sets, i.e.,
$\phi_s(p)\in \cal \M_{t(p)+s}$. Indeed, for each $p\in \bM$, the function $f(s):=t(\phi_s(p))\in \mathbb{R}$ satisfies
\[f'(s)=dt|_{\phi_s(p)} (F)\equiv 1,\]
and hence $f(s)=t(\phi_s(p))=s+t(p)$. We further define two open neighbourhoods $\bM^{1,2}$ of $\M$ in $\bM \subset \mathbb{R} \times \M$, 
\begin{align*}
\bM^1 &:= \{ (t,x) \in \bM \mid \phi_t(x) \text{ exists} \}, \\
\bM^2 &:= \{ p \in \bM \mid \exists \tau \text{ such that } \phi_{-\tau}(p) \in \M_{0} \},
\end{align*}
where we identify $x\in \M$ with $(0,x) \in \M_0$. Note that for each $p=(t,x) \in \bM^2$ the number $\tau = \tau(p)$ is uniquely determined. Namely if $\phi_{-\tau_1}(p),\phi_{-\tau_2}(p)  \in \M_0$ it follows from applying the function $t$ that 
\begin{align*}
 0 &= t(\phi_{-\tau_1}(p))-t(\phi_{-\tau_2}(p))
= t(p)-\tau_1 - (t(p)-\tau_2).
 \end{align*}
 Moreover, as $\bM = \cup_{x \in \M} \mathcal{U}_x$ and each $\mathcal{U}_x$ satisfies by construction \eqref{finalstep}, it simply follows that $\bM^2 = \bM$.
Then we have a well defined diffeomorphism
\begin{eqnarray*}\Psi: \bM^1 \ni (t,x) \mapsto& \phi_t(x) \in \bM,
\end{eqnarray*}
with  $\Psi ((t,x)) \in \M_t$.  Its inverse is given by
\[
\Psi^{-1}(p)=\big(\tau(p), \phi_{-\tau}(p)\big).
\]
It also satisfies
\[d\Psi_{(\widetilde{t},x)} (\del_t)=F|_{\phi_{\widetilde{t}}(x)}.
\]
Therefore, for the pulled back metric we have at each $(\widetilde{t},x) \in \bM^1$ and for every vector field $X$ on $\bM^1$ with values in $T\M_{\tau}$ for some fixed $\tau$
\begin{align}
(\Psi^*\bg)_{(\widetilde{t},x)}(\del_t,\del_t)&= \bg_{\phi_{\widetilde{t}}(x)}(F,F)\ =\frac{1}{\bg(\text{grad}^{\bg} t,\text{grad}^{\bg} t)}, \label{la}
\\
(\Psi^*\bg)_{(\widetilde{t},x)}(\del_t,X)&= \bg_{\phi_{\widetilde{t}}(x)}(F,d\psi_{(\widetilde{t},x)}(X))\ =\ 0,
\end{align}
since $d\psi_{(\widetilde{t},x)}(X)$ is  tangential to $\M_{\widetilde{t}+\tau}$  and $F$ is a multiple of the gradient of $t$. Hence, $\partial_t$ is orthogonal to $T\M$ with respect to $\Psi^*\bg$, showing that 
\begin{align}
\Psi^*\bg = -\widetilde{\lambda}^2dt^2+\g_t \label{zur}
\end{align}
for some $t$-dependent family of metrics on $\M$. As $\Psi$ restricts to the identity on $\M \subset \bM^{1,2}$ it follows that $\g_0 = \g$. Moreover, by equation \eqref{la},
\[ \widetilde{\lambda}{=}\frac{1}{\sqrt{\bg(\text{grad}^{\bg} t,\text{grad}^{\bg} t)}}. \] 
On $\M=\M_0$ we have $\widetilde{\lambda}|_{\M}=\frac{1}{\sqrt{h(\text{grad}^{h} t,\text{grad}^{h} t)|_{\M}}} = \lambda|_{\M}$. In summary, passing from $(\bM,\bg)$ to $(\bM^1,\psi^*\bg)$ via $\psi$ yields an open neigborhood of $\M$ in $\mathbb{R} \times \M$ with parallel null vector field, metric of the form \eqref{zur}, and as $\psi$ restricts to the identity on $\M$, we deduce that $\M$ is also a spacelike Cauchy hypersurface for $(\bM^1,\psi^*\bg)$. This finishes the proof of Theorem \ref{cauchy-vf-theo}.
$\hfill \Box$


\begin{re}
The proof of Theorem \ref{cauchy-vf-theo} shows that $\bg$ depends on the background metric $h$ which was introduced in the proof in terms of the following PDE-system: the contracted difference tensor $E$ of the Levi-Civita connections of $\bg$ and $h$ vanishes, i.e.
\begin{align}
E(X) = -\mathrm{tr}_{\bg} \left(\bg(A(\cdot,\cdot),X) \right) = 0 \text{ for all  } X \in TM,
\end{align}
where $A(Y,Z) := \bnab_Y Z - \nabla^h_Y Z$ for $Y,Z \in TM$.
Imposing this extra condition in Theorem \ref{cauchy-vf-theo} for the solution $\bg$ for a fixed background metric $h$ determines $\bg$ uniquely for each choice of $h$.
\end{re}

\section{Riemannian manifolds satisfying the constraint}
\label{constraintsec}
In this section we study Riemannian manifolds $(\M,\g)$  satisfying the constraint condition~\eqref{cons}, which in fact means that there is a non-zero vector field $U$ such that $\nabla U$ is a symmetric endomorphism of $(T\M,\g)$. 
\subsection{The local structure and the proof of Theorem \ref{pf}}
The condition  \eqref{cons} is equivalent to $\nabla U^\flat=\g(\nabla U, \cdot)$ being symmetric, which in turn is equivalent to  $dU^{\flat} = 0$. 
Now we can argue analogously as in \cite[Proposition 8]{leistner-schliebner13}:

Locally near some fixed $x_0 \in \M$ we have that $U = \text{grad}^{\g} (z)$ for some function $z$ on $\mathcal{V} \subset \M$ with $z(x_0) = 0$. The leafs of the integrable distribution $U^\bot=\ker(dz)$ are given by the level sets  \[\cal U_c=\{x\in \mathcal{V} \mid z(x)=c\}.\]  Let $Z\in \Gamma (\mathcal{V})$ denote the vector field that is proportional to $U$ and such that $\g(U,Z)=dz(Z)\equiv 1$, i.e.,
\[Z=\frac{1}{dz(\text{grad}^{\g} z)}\text{grad}^{\g} z = \frac{1}{\g(U,U)} U,\] 
and denote by $\phi$ its flow. Choose $\epsilon > 0$ and an open subset $\mathcal{W} \subset \M$ centered around $x_0$ such that $\phi$ is defined on $(-\epsilon, \epsilon) \times \mathcal{W}$.  We now restrict the levels $\cal U_c$ to their intersections with $\mathcal{W}$, denoted with the same symbol. 
Since \[\cal L_ZU^\flat = dU^\flat(Z,.)=0,\]
the flow sends level sets to level sets, i.e.,
$\phi_s(x)\in \cal U_{z(x)+s}$. Indeed, for each $x\in \cal U_{z(x)}$, the function $f(s):=z(\phi_s(x))\in \mathbb{R}$ satisfies
\[f'(s)=df_s(\del_s)=dz|_{\phi_s(x)} (Z)\equiv 1,\]
and hence $f(s)=z(\phi_s(x))=s+z(x)$.
Then we have a diffeomorphism
\begin{eqnarray*}\Psi: \cal (-\epsilon,\epsilon) \times \cal U_0&\longrightarrow& \{ y \in \mathcal{W} \mid |z(y)|< \epsilon \} \subset \mathcal{W},\\
(s,x)&\longmapsto& \phi_s(x),
\end{eqnarray*}
with  $\Psi (s,x)\in \cal U_{s}$.  Its inverse is given by
\[
\Psi^{-1}(x)=\big(z(x), \phi_{-z(x)}(x)\big)\in \cal I\times \cal U_0.
\]
It also satisfies
\[d\Psi_{(s,x)} (\del_s)=Z|_{\phi_s(x)}.
\]
Therefore, for the pulled back metric we have
\begin{eqnarray*}
\Psi^*\g(\del_s,\del_s)&=& \g_{\phi_s(x)}(Z,Z)\ =\ \frac{1}{\g(\text{grad}^{\g} z,\text{grad}^{\g} z)}
\ =\ \frac{1}{\g(U,U)},
\\
\Psi^*\g(\del_s,X)&=& \g_{\phi_s(x)}(Z,d\Psi_{(s,x)}(X))\ =\ 0,
\end{eqnarray*}
since $d\Psi_{(t,x)}(X)$ is  tangential to a level set whenever $X$ is,  and $\Z$
is a multiple of the gradient of $z$. Finally, ${\h}_s$ is given by
\[{\h}_s(X,Y)|_x:=\Psi^*\g(X,Y)=\g_{\Psi_s(x)} (d\phi_s|_x(X),d\phi_s|_x(Y)),\]
for $X,Y\in \cal U_0$. Hence,
$
\Psi^*\g = \mu^2ds^2+{\h}_s
$
with
\[{\mu}=\frac{1}{\sqrt{\g(\text{grad}^{\g} z,\text{grad}^{\g} z)}} = \frac{1}{u},\qquad\partial_s = \frac{1}{dz(\text{grad}^{\g} z)}\text{grad}^{\g} z = \frac{1}{u^2} U.\]
Setting $\mathcal{F} := \mathcal{U}_0 = z^{-1}(0)$, this gives the local form of the metric \eqref{nform-vf}.

Moreover, if $Z=\frac{1}{u^2}U$ is  complete, \cite[Proposition 8]{leistner-schliebner13} shows that 
the flow   of the lift of $\Z$ to the universal cover $\widetilde{\M}$ of $\M$ defines a {\em global} diffeomorphism $\Psi$ between  $\widetilde{\M}$  and $\rr \times \widetilde{\F}$, where $\widetilde {F} $ is the universal cover of a leaf $\cal F$ of $U^\perp$.

 Finally, we compute for $(\M,\g)$ as in formula \eqref{nform-vf} and $X,Y \in T\mathcal{F}$ the symmetric bilinear form $\W = -\frac{1}{u}\nabla U$ as follows:
\begin{eqnarray*}
\W(\partial_s, \partial_s) &= &\partial_s\left(\tfrac{1}{u}\right), \\
\W(\partial_s, X) &= &-u \cdot \g(\nabla_{\partial_s} \partial_s, X) = X \left( \tfrac{1}{u} \right), \\
\W(X,Y) &= &-u \cdot \g(\nabla_{X} \partial_s, Y) = -\tfrac{u}{2}\dot{\h}_s(X,Y).
\end{eqnarray*}
Clearly, this is equivalent to equations \eqref{www}. This finishes the proof of Theorem \ref{pf}.
$\hfill \Box$

\subsection{Complete Riemannian manifolds satisfying the constraint}  \label{complsec}
In order to obtain complete Riemannian manifolds satisfying the constraint, we will use the following lemma, which is a weaker version of forthcoming results in \cite{leistner-schliebner16}, see also \cite[Lemma 2]{CortesHanMohaupt12}.

\begin{Lemma}\label{lem2}
Let $\cal F$ be a  compact manifold with a $s$-dependent family of  Riemannian metrics  $\h_s$ and let  
$u$ be a bounded, positive smooth  function on $\M=\rr\times \cal F$. Then the metric
\[\g=\frac{1}{u^2}\, ds^2 + 
\h_s\]
on $\M$ is complete. 
\end{Lemma}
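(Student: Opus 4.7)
The plan is to invoke the Hopf--Rinow theorem: since $(\M,\g)$ is connected, it suffices to show that every closed and metrically bounded subset is compact. Because $\M=\rr\times\cal F$ with $\cal F$ compact, any set of the form $[-T,T]\times\cal F$ is compact (as a topological product), so it would be enough to produce, for every $R>0$, a constant $T=T(R)$ such that every distance ball of radius $R$ is contained in $[-T,T]\times\cal F$.

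The only nontrivial step is therefore a one-line length estimate bounding the $s$-extent of a curve by its $\g$-length. Let $M>0$ be a constant with $u\le M$ (guaranteed by the boundedness hypothesis). For any piecewise smooth curve $\gamma(t)=(s(t),x(t))$ one has
\[
L_\g(\gamma)=\int\sqrt{\tfrac{1}{u^2}\,s'(t)^2+\h_{s(t)}(x'(t),x'(t))}\,dt\ \ge\ \int\frac{|s'(t)|}{u(\gamma(t))}\,dt\ \ge\ \frac{1}{M}\,|s(1)-s(0)|,
\]
since the $\h_s$-term under the square root is nonnegative and $1/u\ge 1/M$. Taking an infimum over curves joining $p=(s_p,x_p)$ and $q=(s_q,x_q)$ gives $d_\g(p,q)\ge \tfrac{1}{M}|s_p-s_q|$. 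Fixing a basepoint $p_0=(0,x_0)$ we conclude $B_R(p_0)\subset[-MR,MR]\times\cal F$, which is compact; any closed bounded set is closed in this compact set and hence compact.

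There is no real obstacle here: the argument is essentially the standard warped-product completeness trick. The only point that deserves attention is that the hypothesis used is $u$ \emph{bounded above}, not below; if $u$ were allowed to tend to $0$ this would only enlarge the conformal factor $1/u^2$ on $ds^2$ and so only \emph{help} completeness, whereas an unbounded $u$ would shrink the $s$-direction metric and could let curves escape to $s=\pm\infty$ in finite $\g$-length, which is precisely what the hypothesis excludes. After this one estimate, Hopf--Rinow delivers geodesic completeness immediately.
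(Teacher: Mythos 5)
Your argument is correct, and it packages the same key estimate as the paper in a slightly different way. The heart of both proofs is identical: since $u\le M:=\sup u<\infty$, the $s$-extent of a curve is controlled by its $\g$-length (equivalently, along a unit-speed curve $|\dot s|\le u\le M$), and compactness of $\cal F$ then confines everything to a compact set $[-T,T]\times\cal F$. The difference is the wrapper. You prove the global Lipschitz estimate $d_\g(p,q)\ge\frac1M|s_p-s_q|$ for arbitrary curves and then invoke the Heine--Borel form of the Hopf--Rinow theorem (closed bounded sets compact $\Rightarrow$ geodesically complete). The paper instead argues directly by contradiction: it takes a maximal geodesic $\gamma:[a,b)\to\M$ with $b<\infty$, uses constant speed to bound $\dot s$, concludes that $\gamma$ stays in a compact set, and contradicts the standard fact that a finitely inextendible geodesic must leave every compact set (which it justifies via a Cauchy-sequence argument). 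Your route buys a cleaner quantitative statement (the $s$-projection is $M$-Lipschitz with respect to $d_\g$, so metric balls are precompact) and avoids the inextendibility lemma, at the cost of citing Hopf--Rinow; the paper's route is self-contained at the level of geodesics. Your closing remark about which bound on $u$ is actually needed (above, not below) is also correct and matches the role the hypothesis plays in the paper. The only cosmetic caveat is the appeal to connectedness of $\M$ when invoking Hopf--Rinow: if $\cal F$ is not connected, just apply the argument on each component $\rr\times\cal F_i$, which changes nothing.
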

\begin{proof}
According to the decomposition $\M=\rr\times \cal F$ we can write every curve $\gamma:[a,b)\to \M$ as  $\gamma(t)=(s(t),x(t))$ with $s:[a,b)\to \rr$. Hence, 
\[ \g_{\gamma(t)}(\dot\gamma(t),\dot\gamma(t))= \left(\frac{ \dot s(t)}{u(\gamma(t))} \right)^2+ \h_{s(t)} (\dot x(t),\dot x(t)).\]
For a curve with  
$\g(\dot\gamma,\dot\gamma) \equiv c\in \R$  constant,   e.g. a geodesic,  $\h_s$ being positive definite shows that $0\le \h_s(\dot x,\dot x)\le c$ is bounded, 
and  $u$ bounded implies
\[
(\dot s)^2
=(u\circ \gamma)^2(c-\h_u(\dot x,\dot x)) \le c (u\circ \gamma)^2 \le c \sup u,
\]
showing that also  $\dot s:[a,b)\to \rr$ is bounded. Hence, if $b\in \rr$, the function  $s:[a,b)\to \rr$ is bounded and  its image lies in a compact set in $\rr$. Hence $s(b)=\lim_{t\to b}s(t)\in \R$ is well-defined.

Now assume that $(\M,\g)$ is incomplete. 
Let $\gamma:[a,b)\to  \M$ be a maximal geodesic with $b\in \rr$. Then $\gamma$ leaves every compact set in $\M$. Indeed, 
if $\gamma(t)$  remained in a compact set, then $\{\gamma(t_n)\}_{n \in \N}$ with $t_n \rightarrow b^-$ would have a convergent subsequence. However, $\{\gamma(t_n)\}$ is a Cauchy sequence for the metric $d_\g$ induced by the Riemannian metric $\g$. Hence $\{\gamma(t_n)\}$ converges, and thus $\gamma $ could be extended beyond $b$.
On the other hand, 
we have seen that the image of $s$ lies in a compact set in $\rr$. Hence, that $\gamma$ leaves every compact set in $\M=\rr\times \cal F$ is a contradiction to $\cal F$  compact.
\eprf

\section{Special Lorentzian holonomy and families of Riemannian metrics}
\label{holsec}
Based on the classification of indecomposable holonomy groups of Lorentzian manifolds with parallel null vector field \cite{bb-ike93,leistnerjdg}, we will now show how we can use Theorem \ref{cauchy-vf-theo} to construct Lorentzian manifolds with prescribed holonomy from families of Riemannian metrics.
Our aim in this section is to prove Theorem \ref{doublecone}.
\subsection{The screen bundle of $(\bM,\bg)$} \label{screensection}

To every Lorentzian manifold with parallel null vector field, in particular  to the data $(\bM,\bg,\;V)$ constructed via Theorem \ref{cauchy-vf-theo}, we can associate the \textit{screen bundle} \[\mathcal{S}:={\;V}^{\perp} / \;V \rightarrow \bM\] equipped with covariant derivative $\nabla_X^{\mathcal{S}} [Y] := \left[ \bnab_X Y \right]$. In contrast to the general case, however, our settting always yields a canonical realisation of $\mathcal{S}$ as a subbundle $S$ of $T\bM$ by means of the natural vector bundle isomorphism
\[
\begin{array}{rcl}
T\bM \supset S:=T^{\perp} \cap V^{\perp}\ni Y &\longmapsto&[Y]\in \mathcal{S}. \\
\searrow & &\swarrow\\
&\bM&\end{array}\]
This isomorphism  pulls back $\nabla^{\mathcal{S}}$ to the covariant derivative
\[\nabla^S := \mathrm{pr}_S \circ \bnab_{|S}, \]
which in turn is metric with respect to the positive definite screen metric $\bg^{S} := \bg_{S \times S}$. Having these identifications in mind, we also refer to $S$ as the screen. The screen construction is a useful tool when analyzing the holonomy of $(\bM,\bg)$, which by construction is contained in the stabiliser of a null vector, i.e. (up to conjugation), we have  \[\Hol(\bM,\bg) \subset \SO(n) \ltimes \mathbb{R}^n \subset \SO(1,n+1).\] 
(Note that the Lorentzian manifolds arising via Theprem \ref{cauchy-vf-theo} are time-orientable.) For any subgroup $\mathbf{G} \subset \SO(n) \ltimes \mathbb{R}^n$, let $\mathrm{pr}_{\SO(n)} \mathbf{G}$ denote its projection onto the $\SO(n)$-part. Then we have by construction
\begin{align}
\Hol(S,\nabla^{S}) \cong \mathrm{pr}_{\SO(n)} \Hol(\bM,\bg).
\end{align}

Recall that on $\bM$ the parallel null vector field $V$ decomposes into $V = \overline{u}T - N$. We next list useful formulas for the screen covariant derivative $\nabla^S$ and the screen curvature $\R^S$. By trivial extension, we will often view a section of $S \rightarrow \bM$ equivalently as element of $\mathfrak{X}(\bM)=\Gamma(T\bM)$ which is everywhere orthogonal to $T$ and $\;V$ and denote it with the same symbol. 

\begin{Lemma} \label{pra}
Let $\sigma \in \Gamma(S)$ and let $X,Y,Z \in \Gamma(T\bM)$. The following hold:
\begin{eqnarray*}
\nabla^S_Y \sigma &=&\bnab_Y \sigma - \frac{1}{\overline{u}} \bg(\sigma,\bnab_Y T)  \;V, \\
\R^S(X,Y)\sigma &=& \mathrm{pr}_S (\bR(X,Y)\sigma), \\
0&=&(\nabla^S_Z\R^S)(X,Y)+ (\nabla^S_X\R^S)(Y,Z) + (\nabla^S_Y\R^S)(Z,X).
\end{eqnarray*}
\end{Lemma}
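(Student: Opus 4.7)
The plan is to verify the three formulas in order, each building on the previous, exploiting the fact that $V$ is parallel and $\mathrm{pr}_S V = 0$.

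For the first formula, the approach is to characterise $\sigma \in \Gamma(S)$ as the simultaneous vanishing $\bg(\sigma, T) = 0 = \bg(\sigma, V)$, and then differentiate in direction $Y$. Since $\bnab V = 0$, differentiating $\bg(\sigma, V) = 0$ yields $\bg(\bnab_Y\sigma, V) = 0$, while differentiating $\bg(\sigma, T) = 0$ yields $\bg(\bnab_Y\sigma, T) = -\bg(\sigma, \bnab_Y T)$. I would then use the direct sum decomposition $T\bM = S \oplus \mathrm{span}(T, V)$ to write $\bnab_Y\sigma = \nabla^S_Y\sigma + aT + bV$ with $\nabla^S_Y\sigma \in \Gamma(S)$, and determine the coefficients $a,b$ from the $2\times 2$ linear system given by inserting $T$ and $V$ into this decomposition, using $\bg(T,T) = -1$, $\bg(V,V) = 0$, $\bg(T,V) = -\overline{u}$. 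This forces $a = 0$ and $b = \overline{u}^{-1}\bg(\sigma, \bnab_Y T)$, giving exactly the stated expression.

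For the second formula, I would substitute the formula from part (1) into the defining expression
\[\R^S(X,Y)\sigma = \nabla^S_X\nabla^S_Y\sigma - \nabla^S_Y\nabla^S_X\sigma - \nabla^S_{[X,Y]}\sigma.\]
The key observation is that any summand in $\nabla^S_Y\sigma$ of the form $fV$ contributes only further $V$-terms under a subsequent $\bnab$ (because $\bnab V = 0$), and these are annihilated by the outer projection $\mathrm{pr}_S$. Similarly, the $V$-corrections from each nested application of $\nabla^S = \mathrm{pr}_S \circ \bnab$ are lost, so everything collapses to $\R^S(X,Y)\sigma = \mathrm{pr}_S\bigl(\bnab_X\bnab_Y\sigma - \bnab_Y\bnab_X\sigma - \bnab_{[X,Y]}\sigma\bigr) = \mathrm{pr}_S\bigl(\bR(X,Y)\sigma\bigr)$.

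For the third formula, this is nothing but the second Bianchi identity for the linear connection $\nabla^S$ on the vector bundle $S \to \bM$, which holds for any connection on any vector bundle. The cleanest route is to invoke this general fact, noting that the covariant derivative of $\R^S \in \Gamma(\Lambda^2 T^*\bM \otimes \mathrm{End}(S))$ uses $\bnab$ on the $T^*\bM$-slots and $\nabla^S$ on the $S$-slots, and the identity follows from direct expansion using the definition of $\R^S$ together with the Jacobi identity for Lie brackets (and torsion-freeness of $\bnab$, which replaces $[X,Y]$ by $\bnab_X Y - \bnab_Y X$ in the computation). Alternatively, one can deduce it from the Lorentzian second Bianchi $\sum_{\mathrm{cyc}}(\bnab_Z\bR)(X,Y) = 0$ together with part~(2), again using that all $V$-corrections generated by commuting $\mathrm{pr}_S$ past $\bnab$ are killed by the final projection. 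There is no real obstacle here — the entire lemma is a direct consequence of the projection structure and the fact that $V$ is parallel — but one has to be careful to verify that each correction term really lies in $\mathrm{span}(V)$ and therefore vanishes under $\mathrm{pr}_S$.
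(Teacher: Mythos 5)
Your proof is correct and is essentially the calculation the paper has in mind: the paper's proof is only the remark that the lemma follows "directly from the various definitions, parallelity of $V$ as well as the symmetries and second Bianchi identity for $\bR$", and your arguments for the first two identities (the orthogonal splitting $T\bM = S \oplus \mathrm{span}(T,V)$, differentiating $\bg(\sigma,T)=\bg(\sigma,V)=0$, and the observation that the $V$-valued correction terms are $\bnab$-stable and killed by $\mathrm{pr}_S$) are exactly those straightforward computations. For the third identity your primary route — the general second Bianchi identity for an arbitrary vector-bundle connection, with the torsion-free $\bnab$ acting on the form slots — is a valid and slightly more general substitute for the paper's hinted derivation from the Bianchi identity of $\bR$ via part (2), and your stated alternative is precisely that derivation, where the check you flag (that the commutator corrections lie in $\mathrm{span}(V)$, which uses $\bg(\bR(X,Y)\sigma,V)=0$ coming from $\bnab V=0$ and the curvature symmetries) is indeed the point that must be verified.
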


\begin{proof}
These are straightforward calculations following directly from the various definitions, parallelity of $V$ as well as the symmetries and second Bianchi identity for $\bR$.
\end{proof}
For the data $(\bM,\bg,\;V)$ constructed via Theorem \ref{cauchy-vf-theo}, let 
\[S^{r,s} = \otimes^{r,s}S:= \underbrace{S^* \otimes...\otimes S^*}_{r \times } \otimes \underbrace{S \otimes...\otimes S}_{s \times} \rightarrow \bM \] denote the $(r,s)$-screen tensor bundle with covariant derivative induced by $\nabla^{S}$ and denoted with the same symbol. $\R^S$ also denotes the curvature operator of $(T^{r,s},\nabla^S)$.

Finally, we need to understand the pullback $S|_{\M} \rightarrow \M$ of the screen bundle $S \rightarrow \bM$ by means of the inclusion $\M = \{ 0 \} \times \M \hookrightarrow \bM$. By construction, it follows that $S|_{\M} = U^{\perp} \rightarrow \M$, whence the restriction $\sigma|_{\M}\in \Gamma(S|_{\M})$ of any $\sigma \in \Gamma(S)$ to $\M$ can be regarded as vector field on $\M$ which is orthogonal to $U$. On the vector bundle $U^\perp\to \M$ we have the connection that is induced by the Levi-Civita connnection of $\g$, $\nabla^{\perp} = \mathrm{pr}_{U^{\perp}} \circ \nabla^{\g}$. Then
\blem
For each  $X \in \mathfrak{X}(\M)$ and $\sigma \in \Gamma(S)$, we have
\begin{equation}
\begin{aligned} \label{hsf1}
\left(\nabla^S_X \sigma\right)|_{\M} = \nabla^{\perp}_X \sigma|_{\M}.  
\end{aligned}
\end{equation}
\elem
\bprf
It follows from formula \eqref{lcf} that 
\[
\nabla^S_X\sigma|_{\M}=\pr_S( \bnab_X\sigma|_{\M})=\pr_S( \nabla_X\sigma|_{\M})=\pr_{U^\perp}( \nabla_X\sigma|_{\M}).\]
\eprf
Now we describe the parallelity of  a section of $S^{r,s} \to\bM$ in terms of the corresponding section of the pulled back bundle $S|_\M$.

\begin{Proposition} \label{hlem}
Let $(\M,\g,U)$ be a Riemannian manifold satisfying the constraint \eqref{cons} and $(\bM,\bg)$ the Lorentzian manifold arising via Theorem \ref{cauchy-vf-theo}.  Then the $\nabla^S$-parallel sections of the bundle $S^{r,s} \rightarrow \bM$ are in one-to-one correspondence with  $\nabla^{\perp}$-parallel sections $\zeta$ of the pulled back bundle $S^{r,s}|_{\M} \rightarrow \M$, i.e., with
\begin{align} \nabla^{\perp}_{X} \zeta = 0, \quad\text{ for all $X \in T\M$.}\label{pse}
\end{align}
\end{Proposition}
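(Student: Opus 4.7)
The forward direction, that the restriction of a $\nabla^{S}$-parallel section $\widetilde{\zeta}\in\Gamma(S^{r,s})$ is $\nabla^{\perp}$-parallel on $\M$, is immediate from formula~\eqref{hsf1} extended diagonally to tensor bundles. For the converse, given $\zeta\in\Gamma(S^{r,s}|_{\M})$ with $\nabla^{\perp}\zeta=0$, the plan is to construct the extension using the flow of the parallel null vector field $V=\overline{u}T-U$, which is transverse to $\M$ since its $T$-component $\overline{u}T|_{\M}$ is non-zero. Its integral curves (null geodesics, since $\bnab V=0$) sweep out a neighbourhood $\mathcal{U}$ of $\M$ in $\bM$, and I would define $\widetilde{\zeta}$ on $\mathcal{U}$ by $\bnab$-parallel transport of $\zeta$ along these $V$-curves. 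As $\bnab V=0$, this ambient parallel transport preserves both $V^{\perp}$ and the line $\langle V\rangle$, and therefore descends to a well-defined section $[\widetilde{\zeta}]$ of the tensor bundle $\mathcal{S}^{r,s}$, where $\mathcal{S}=V^{\perp}/\langle V\rangle$. Under the isomorphism $\mathcal{S}^{r,s}\cong S^{r,s}$ from Subsection~\ref{screensection}, this is the candidate extension.

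To verify that $[\widetilde{\zeta}]$ is $\nabla^{\mathcal{S}}$-parallel I would check that $\bnab_{Y}\widetilde{\zeta}$ lies in the $\bnab$-invariant subbundle $\mathcal{K}^{r,s}\subset(V^{\perp})^{r,s}$ which is the kernel of the projection to $\mathcal{S}^{r,s}$, for every $Y\in\Gamma(T\bM)$. For $Y=V$ this holds by construction. For $Y$ obtained by Lie-transport of a vector field from $\M$ along $V$, so that $[V,Y]=0$, commuting covariant derivatives gives
\[
\bnab_{V}(\bnab_{Y}\widetilde{\zeta})\;=\;\bR(V,Y)\widetilde{\zeta}\;+\;\bnab_{Y}(\bnab_{V}\widetilde{\zeta})\;+\;\bnab_{[V,Y]}\widetilde{\zeta}\;=\;0,
\]
using $\bR(V,\cdot)=0$ because $V$ is parallel. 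Hence $\bnab_{Y}\widetilde{\zeta}$ is $\bnab$-parallel along $V$-curves, and by $\bnab$-invariance of $\mathcal{K}^{r,s}$ the problem reduces to the initial condition $\bnab_{Y_{0}}\widetilde{\zeta}|_{\M}\in\mathcal{K}^{r,s}|_{\M}$ for $Y_{0}:=Y|_{\M}\in T\M$. Here the constraint~\eqref{cons} enters decisively: for vector-valued $\zeta\in U^{\perp}$, the Weingarten formula~\eqref{lcf} combined with $\nabla U=-u\W$ and $\nabla^{\perp}\zeta=0$ yields $\nabla^{\g}_{Y_{0}}\zeta=\tfrac{\W(Y_{0},\zeta)}{u}U$, whence
\[
\bnab_{Y_{0}}\zeta\;=\;\nabla^{\g}_{Y_{0}}\zeta-\W(Y_{0},\zeta)T\;=\;-\tfrac{\W(Y_{0},\zeta)}{u}\,V|_{\M}\;\in\;\langle V\rangle.
\]
The general $(r,s)$-case then follows from the Leibniz rule for $\bnab$.

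For uniqueness, any two $\nabla^{S}$-parallel extensions of $\zeta$ differ by a $\nabla^{S}$-parallel section vanishing on the Cauchy hypersurface $\M$, which must vanish everywhere by parallel transport. Once the local $V$-flow extension is in place, a global extension follows by propagating $[\widetilde{\zeta}]$ by $\nabla^{\mathcal{S}}$-parallel transport, which is path-independent on $\mathcal{U}$ where $[\widetilde{\zeta}]$ is already parallel, combined with global hyperbolicity of $\bM$ to reach every point. I expect the hardest part to be the tensorial initial-condition computation: the reduction from the $(r,s)$-case to the vector case via the Leibniz rule requires carefully tracking how each factor of $\langle V\rangle$ and $V^{\perp}/\langle V\rangle$ is treated by $\bnab$, although the parallelism of $V$ ensures that both subbundles are respected.
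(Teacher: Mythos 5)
Your argument is correct and follows essentially the same route as the paper: extend $\zeta$ by parallel transport along $V$, use $\bR(V,\cdot)=0$ to show that the derivative in the transverse directions satisfies a homogeneous transport equation along the $V$-flow, and conclude from the vanishing initial data on $\M$ supplied by $\nabla^{\perp}\zeta=0$. The only differences are cosmetic: you work with ambient tensors and the quotient bundle $V^{\perp}/\langle V\rangle$ instead of the realised screen $S$ with the identities of Lemma~\ref{pra}, and you re-derive the initial-condition identity \eqref{hsf1} by a direct computation with the constraint rather than citing it.
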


\bprf
By the previous lemma it is clear that 
a parallel section of $T^{r,s}S \rightarrow \bM$ satisfies \eqref{pse}. 

On the other hand, let us assume condition \eqref{pse}.
We extend $\zeta$ to a section of $T^{r,s}S \rightarrow \bM$ by parallel transport in $V$-direction, i.e., such that  $\nabla_V^S \zeta = 0$. It then suffices to show that $\nabla^S_X \zeta = 0$ for $X \in T^{\bot}$. To this end we introduce the bundle \[H:=(T^\perp)^*\otimes T^{r,s}S \to \bM\] as well as the section $A \in \Gamma(H)$, given by
\beq A(X) := \nabla^S_X \zeta. 
\eeq 
Clearly, there are naturally induced covariant derivatives on $H$. For  $X \in T^{\bot}$ we compute, using the identities from Lemma \ref{pra} as well as $ \bnab V = 0$, that
\beq
(\nabla^S_V A)(X) &=& \nabla^S_V (A(X)) - A(\bnab_V X)
\\& = &\R^S(V,X)\zeta +\nabla^S_X\nabla^S_V\zeta+  \nabla^S_{[V,X]}\zeta - \nabla^S_{\bnab_VX}\zeta \\
&=& \R^S(V,X)\zeta  \\
&=&0.
\eeq
Thus, $\bnab^S_V A = 0$ which is a linear symmetric hyperbolic first order PDE for $A$. As $A|_{\M} = 0$ by assumption, we conclude $A \equiv 0$.                  
\eprf
Now we specify  Proposition \ref{hlem} for the situation when $(\M,g)$ is globally given as in Theorem~\ref{pf}, i.e., when $\M=\cal I\times \cal F$,  where $s\in \cal I$ with $\cal I$ and interval, $\rr$ or the circle,  $\cal F$ is a smooth manifold,  and $\g=u^{-2}ds^2+\h_s$ with a  smooth non-vanishing function $u$ on $\M$. Then, $U=u^2\partial_s$ and  we can express a
section $Z$ of the bundle $U^\perp \to \M$ by an $s$-dependent family of sections $Z_s$ of $T\cal F \to \cal F$. 
Differentiating such a section $Z_s$ in direction $X\in T\cal F$ we get the identity
\[\nabla^\perp_XZ =\nabla^{\h_s}_XZ, \]
where $\nabla^{\h_s}$ is the Levi-Civita connection of the metric $\h_s$. Differentiating in $s$-direction, by the Koszul formula,  we get for each $X\in T\cal F$ that 
\begin{equation}
\label{koszul}
2\g(\nabla^\perp_{\partial_s}Z_s,X)
=
\partial_s( \g(Z_s,X)) +\g( [\partial_s,Z_s],X)
=
(\cal L_{\partial_s}\g)(Z_s, X) +2 \g( [\partial_s,Z_s ],X),
\end{equation}
where $\cal L_{\partial_s}$ denotes the Lie derivative with respect to $\partial_s$ and where we assume that $[\partial_s,X]=0$. 
However, we have that $(\cal L_{\partial_s}\g)(X,Y)=(\cal L_{\partial_s}\h_s)(X,Y)$, whenever $X$ and $Y$ are tangential to $\cal F$. 
Hence, when dualising equation \eqref{koszul} with the metric $\h_s$ we get that 
\[ \nabla^\perp_{\partial_s}Z_s= \tfrac{1}{2}(\cal L_{\partial_s}\h_s)^\sharp(Z_s)+[\partial_s,Z_s ],\]
where $\sharp$ denotes the ($s$-dependent)  dualisation with respect to $\h_s$. 
%
%
Introducing the notation $\dot\h_s$ for $\cal L_{\partial_s}\h_s$ we can write this concisely as
\[ \nabla^\perp_{\partial_s}Z_s=[\partial_s,Z_s ]+ \tfrac{1}{2}\dot\h^\sharp_s(Z_s).\]
Using this, for a family of $1$-forms $\sigma_s\in \Gamma(T^*\cal F)$ we get 
\[ (\nabla^\perp_{\partial_s}\sigma_s)(X)=
\partial_s(\sigma_s(X))-\sigma_s(\nabla^\perp_{\partial_s}X)
=
(\cal L_{\partial_s}\sigma_s)(X)-
\tfrac{1}{2}\sigma_s\left((\dot \h_s)^\sharp(X)\right),\]
for all $X\in T\cal F$, i.e. that
\[\nabla^\perp_{\partial_s}\sigma_s = \dot\sigma_s+\tfrac{1}{2}\dot\h_s^\sharp \bullet \sigma_s,\]
where $\bullet $ denotes the natural action of endomorphisms on $1$-forms and $\dot\s_s:=\cal L_{\partial_s}\sigma_s$ is the Lie derivative.

This relation generalises to families of tensor fields $\sigma_s$ of higher rank and we obtain:
 
\begin{Corollary} \label{hcor}
Let $\cal F$ be a smooth manifold and $\h_s$ be a family of Riemannian metrics on $\cal F$, where $s\in \cal I$ with $\cal I$ being  an interval, $\rr$ or the circle, $u$ a non-zero smooth function on $\M=\cal I\times \cal F$ and $\g=u^{-2}ds^2+\h_s$ be the Riemannian manifold defined in \eqref{nform-vf}. Moreover, let  $(\bM,\bg)$ the Lorentzian manifold arising from $(\M,\g)$  via Theorem \ref{cauchy-vf-theo}.  Then there is a one-to-one correspondence between
\begin{enumerate}
\item sections $\overline{\sigma}$ of the bundle $S^{k,l}\to\bM$ such that $\nabla^S_X \overline\sigma =0$ for all $X\in T\bM$;
\item sections $\sigma$ of of the bundle $\otimes^{k,l}U^\perp\to \M$ such that $\nabla^\perp_Y \sigma=0$ for all $Y\in T\M$;
\item $s$-dependent families of sections $\sigma_s$ of $\otimes^{k,l}T\cal F \to \cal F$ with 
\begin{eqnarray}
\nabla^{\h_s}_Z\sigma_s&=&0,\quad\text{ for all }Z\in T\cal F,\label{fl1}\\
\dot \sigma_s &=&-\tfrac{1}{2}\dot\h_s^\sharp\bullet \sigma_s,\label{flow1}
\end{eqnarray}
where $\nabla^{\h_s}$ is the Levi-Civita connection of the metric $\h_s$, the dot denotes  the Lie derivative in $s$-direction, $\sharp$ the dualisation with respect to $\h_s$,  and $\bullet$ is the natural action of an endomorphism field on $\otimes^{k,l}T\cal F$., i.e., \begin{eqnarray*}
(\dot\h_s^\sharp\bullet \sigma_s)
(X_1,\ldots, X_k)
&=&
\dot\h_s^\sharp\left( \sigma_s
(X_1,\ldots, X_k)\right)
\\
&& - \sigma_s( \dot\h_s^\sharp(X_1),X_2,\ldots, X_k) - \ldots -  \sigma_s( X_1,\ldots, X_{k-1},\dot\h_s^\sharp(X_k)),
\end{eqnarray*}
for $X_1, \ldots , X_k\in T\cal F$.
\end{enumerate}
\end{Corollary}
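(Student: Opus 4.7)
The equivalence (1)$\Leftrightarrow$(2) is exactly the content of Proposition \ref{hlem}, so the task reduces to establishing (2)$\Leftrightarrow$(3). The plan is to decompose the condition $\nabla^\perp_Y\sigma=0$ for all $Y\in T\M$ according to the product splitting $T\M=\R\partial_s\oplus T\cal F$ coming from Theorem~\ref{pf}. Since $U=u^2\partial_s$, we have $U^\perp= T\cal F$ fibrewise under the $\g$-orthogonal decomposition, so an $s$-family of tensor fields $\sigma_s\in\Gamma(\otimes^{k,l}T\cal F)$ is literally the same object as a section $\sigma\in\Gamma(\otimes^{k,l}U^\perp)$. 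Under this identification, testing parallelism against $X\in T\cal F$ and against $\partial_s$ produces the two separate conditions (\ref{fl1}) and (\ref{flow1}).

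First, for $X\in T\cal F$, I would show $\nabla^\perp_X=\nabla^{\h_s}_X$ when acting on tensor fields tangential to the $s$-slice. For a vector field $Z$ tangent to $\cal F$ one has $\nabla^\g_X Z = \nabla^{\h_s}_X Z + (\text{$\partial_s$-component})$, but the $\partial_s$-component is annihilated by $\pr_{U^\perp}$. Extending by the Leibniz rule to arbitrary $(k,l)$-tensors tangent to $\cal F$ gives $\nabla^\perp_X=\nabla^{\h_s}_X$ on $\otimes^{k,l}T\cal F$, so the condition $\nabla^\perp_X\sigma=0$ for all $X\in T\cal F$ is exactly (\ref{fl1}).

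Second, for the $\partial_s$-direction, I would extend the Koszul-formula computation already carried out in the excerpt for vector fields and $1$-forms to arbitrary $(k,l)$-tensors. The key identity established there is
\[
\nabla^\perp_{\partial_s}Z_s=[\partial_s,Z_s]+\tfrac12\dot\h_s^\sharp(Z_s),\qquad
\nabla^\perp_{\partial_s}\sigma_s=\dot\sigma_s+\tfrac12\dot\h_s^\sharp\bullet\sigma_s
\]
for $Z_s\in\Gamma(T\cal F)$ and $\sigma_s\in\Gamma(T^*\cal F)$. Applying the Leibniz rule for $\nabla^\perp_{\partial_s}$ on tensor products and combining these two building blocks term by term yields
\[
\nabla^\perp_{\partial_s}\sigma_s=\dot\sigma_s+\tfrac12\dot\h_s^\sharp\bullet\sigma_s
\]
for any $\sigma_s\in\Gamma(\otimes^{k,l}T\cal F)$, where $\bullet$ is precisely the derivation extension of $\dot\h_s^\sharp$ written out in the statement. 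Here one uses that $\cal L_{\partial_s}$ is also a derivation of the tensor algebra and that $\dot\h_s^\sharp$ acts as indicated with the appropriate sign changes between covariant and contravariant slots (which is just the standard convention for how an endomorphism of $T\cal F$ acts on $T^*\cal F$ versus $T\cal F$). Consequently, $\nabla^\perp_{\partial_s}\sigma_s=0$ is equivalent to the flow equation (\ref{flow1}).

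Combining the two parts, $\nabla^\perp\sigma=0$ on $\M$ is equivalent to the simultaneous validity of (\ref{fl1}) and (\ref{flow1}), which finishes (2)$\Leftrightarrow$(3). The main obstacle is purely bookkeeping—keeping track of signs and dualisation conventions so that the endomorphism action $\bullet$ matches the one declared in the statement—but conceptually the argument is a direct generalisation of the $1$-form case treated immediately before the corollary.
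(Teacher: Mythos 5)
Your proposal is correct and follows essentially the same route as the paper: the equivalence (1)$\Leftrightarrow$(2) is quoted from Proposition~\ref{hlem}, and (2)$\Leftrightarrow$(3) is obtained by splitting $T\M=\mathbb{R}\partial_s\oplus T\cal F$, identifying $U^\perp$ with $T\cal F$, and extending the identities $\nabla^\perp_X=\nabla^{\h_s}_X$ and $\nabla^\perp_{\partial_s}\sigma_s=\dot\sigma_s+\tfrac12\dot\h_s^\sharp\bullet\sigma_s$ from vector fields and $1$-forms to general $(k,l)$-tensors by the Leibniz rule, exactly as in the discussion preceding the corollary.
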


This corollary will now provide us with a proof of Theorem~\ref{doublecone}.




\subsection{Lorentzian special holonomy and the proof of Theorem~\ref{doublecone}}
Here we use the result of Section \ref{screensection} to obtain a proof of  Theorem~\ref{doublecone}. 
In the setting of Theorem~\ref{doublecone} start with data $(\M,\g,\W,U)$  satisfying the initial condition \eqref{cons} and then 
first apply Theorem~\ref{pf} to conclude that these  data are given as  \[(\M= L \times \mathcal{F}, \g = \frac{1}{u^2}ds^2 + {\h}_s, U = u^2 \partial_s)\] solving \eqref{cons} for $\W$ as in \eqref{www}.  Thus the existence of $(\bM,\bg)$ with parallel null vector and initial data for $\g_t$ and $\dot{\g}_t$ as desired follows from Theorem \ref{cauchy-vf-theo}. Next, it follows from Section \ref{screensection}, in particular from Proposition \ref{hlem}, that $\mathrm{pr}_{\SO(n)} \Hol(\bM,\bg)=\Hol(S,\nabla^S)$ fixes an element in $T^{k,l} \mathbb{R}^n$ if and only if there is $\sigma \in \Gamma(\M,T^{k,l}U^{\perp})$ solving $\nabla^{\perp} \sigma = 0$. 
Using the explicit form of $(\M,\g)$ and $U$ from Theorem~\ref{pf}, $\sigma$ can be equivalently viewed as $s$-dependent family of tensor fields $\sigma_s \in \Gamma(\F,T^{k,l}\F) $. By and Corollary \ref{hcor}, equation $\nabla^{\perp} \sigma = 0$ is then equivalent to equations (\ref{fl1}) and (\ref{flow1}). This proves the first statement in Theorem \ref{doublecone} and it remains to verify the statements in Table \ref{table1}. For this we first consider the situation that the screen holonomy is in  $\mathbf{U}(\frac{n}{2})$, i.e., that \[\Hol(S,\nabla^S)=\mathrm{pr}_{\SO(n)}\Hol(\bM,\bg)\subset \mathbf{U}(\tfrac{n}{2}).\] 
By equation \eqref{fl1}, this case requires $\Hol(\mathcal{F},{\h}_s) \subset \mathbf{U}(\frac{n}{2})$. In other words, there are families of complex structures $J_s$, Kaehler forms $\omega_s$ on $\mathcal{F}$ which are parallel with respect to 
\begin{align}
{\h}_s = \omega_s(J_s \cdot, \cdot) \label{kaehlermetric}
\end{align} and satisfy 
the flow equations 
 \eqref{floweq}. Hence, 
 $\Hol(S,\nabla^S) \subset \mathbf{U}(\frac{n}{2})$ is equivalent to 
 $\Hol(\mathcal{F},{\h}_s) \subset \mathbf{U}(\frac{n}{2})$ and equations
 \begin{equation}
 \label{flowkaehler0}
\dot J_s +\tfrac{1}{2} \dot{\h}_s^{\sharp} \bullet J_s=0, \qquad
\dot \omega_s+ \tfrac{1}{2} \dot{\h}_s^{\sharp} \bullet \omega_s=0
\end{equation}
 for all $s$ and where the dot again denotes the Lie derivative with respect to the parameter $s$.

Now we turn to those holonomy groups in Table \ref{table1} that are defined  as the stabiliser of one or more tensors, i.e., to $\Sp(\frac{n}{4})$, $\G_2$ and $\Spin(7)$:

The case $n=4k$ and constraints for $\Hol(S,\nabla^S) \subset \mathbf{Sp}(k)$ is in 
 complete analogy to the $\mathbf{U}(\tfrac{n}{2})$-case, characterised by families of hyper-K\"ahler metrics ${\h}_s$ on $\F$ with corresponding compatible parallel almost complex structures $(J_s^1)$, $i=1,2,3$, i.e. $J_s^1 J_s^2 = J_s^3$ and Kaehler forms $\omega_s^i$ such that ${\h}_s = \omega_s(J_s \cdot, \cdot)$ satisfying the corresponding flow equations (\ref{flowkaehler0}).

For the case $n=7$ and constraints for $\Hol(S,\nabla^S) \subset \mathbf{G}_2$
recall that the exceptional group $\mathbf{G}_2 \subset \SO(7)$ can be realised as the stabiliser subgroup of a stable $3$-form in $\mathbb{R}^7$, see for example \cite{bryant87,joyce07,hitchin01,clss-09} more details. Hence, by Coroallary \ref{hcor} the case $\Hol(S,\nabla^S) \subset \mathbf{G}_2$ is characterised by 
a family of  associated stable 3-forms $\phi_s \in \Omega^3(\F)$ on $\mathcal{F}$ evolving according to equation~(\ref{flow1})
with associated family  
${\h}_s$ of $\mathbf{G}_2$. This implies 
 the corresponding entry in Table \ref{table1}.

For the case $n=8$ and constraints for $\Hol(S,\nabla^S) \subset \mathbf{Spin}(7)$ recall the 
 algebraic properties of the group $\mathbf{Spin}(7) \subset \SO(8)$ and its realisation in terms of the stabiliser of a generic 4-form, again see  \cite{bryant87,joyce07} for details but also  \cite{Karigiannis08}. The discussion then is completely analogous to the $\G_2$ case and the constraint equations are equivalent to the existence of a family of parallel $\mathbf{Spin}(7)$-structures $\psi_s$ on $\mathcal{F}$ evolving under the flow equation~(\ref{flow1}).

\bigskip

Now we turn to the case that 
is $n$ even and {\em the screen holonomy is special unitary}, i.e.,  $\Hol(S,\nabla^S) \subset \mathbf{SU}(\frac{n}{2})$. This is the most difficult case because this reduction is not simply given  as the stabiliser of a tensor, but rather by a trace condition in addition to the reduction to $\mathbf{U}(n)$. 

The parallel almost complex structures $J_s$ coming from the reduction $\Hol(S,\nabla^S) \subset \mathbf{SU}(\frac{n}{2})$ give a $\nabla^{\perp}$ parallel almost complex structure $J \in \Gamma(\M, \text{End}(U^{\perp}))$. By Proposition \ref{hlem},  $J$ gives via $\nabla^S$-parallel translation a $\nabla^S$-parallel almost complex structure $J^S$ on the screen $S \rightarrow \bM$. 
From now on we will work on the Lie algebra level. The holonomy algebra $\hol(S,\nabla^S)$ is contained in $\mathfrak{su}(\tfrac{n}{2})$ if and only if  $\hol(S,\nabla^S) \mathfrak{u}(\tfrac{n}{2})$ and each of its elements $A$ satisfies
\[\tr ( J^S\circ A)=0,\]
where we identify elements in the holonomy algebra with endomorphism of a fibre of the screen bundle $S$. 
Now we apply the  Ambrose-Singer holonomy theorem  to the holonomy algebra $\mathfrak{hol}(S,\nabla^S)$ at $p\in \M$, which states that 
\[\mathfrak{hol}(S,\nabla^S)=\mathrm{span}\{(P^S_{\gamma})^{-1}\circ \R^S(X,Y)\circ P^S_{\gamma}\mid \gamma:[0,1]\to \bM,\ \gamma(0)=p,X,Y\in T_{\gamma(1)}\bM\},
\]
where  $\R^S$ is the curvature of the screen bundle $S$ and $P^S_\gamma$ is the parallel transport along a curve $\gamma$. Since $J^S$ is parallel, it commutes with all parallel transports $P^S_\gamma$ and we obtain 
\[\tr( J^S \circ (P^S_{\gamma})^{-1}\circ \R^S(X,Y)\circ P^S_{\gamma} )= 
\tr( (P^S_{\gamma})^{-1}\circ J^S \circ  \R^S(X,Y)\circ P^S_{\gamma} )
=
\tr( J^S \circ  \R^S(X,Y) ).\]
Using this, the Ambrose-Singer theorem  and the fact that $\R^S(V,\cdot) = 0$ we obtain that 
 $\hol(S,\nabla^S) \subset \mathfrak{su}(\tfrac{n}{2})$ 
if and only if  $J^S$ additionally satisfies
\begin{align}
\mathrm{tr} \left( J^S \circ \R^S(X,Y) \right) = 0, \quad \text{ for all } \quad X,Y \in T^{\bot} \rightarrow \bM. \label{stre}
\end{align}
Let us now consider the left side of condition \eqref{stre} as section $C$ in the bundle $\Lambda^2 T^{\bot} \rightarrow \bM$, which in turn carries a covariant derivative induced by $\bnab$. We have by parallelity of $J^S$ and $V$ and Lemma \ref{pra} that
\beq
(\bnab_V C)(X,Y) &=&\mathrm{tr} \left( J^S \circ (\nabla_V^S R^S)(X,Y) \right) \\
&=&\mathrm{tr} \left( J^S \circ (\nabla_X^S R^S)(V,Y) \right) +\mathrm{tr} \left( J^S \circ (\nabla_Y^S R^S)(X,V) \right) \\
&=& 0.
\eeq
Hence, $C \equiv 0$ if and only if 
\begin{align}
C|_{\M} = 0, \label{eva1}
\end{align}
which in turn is evaluated by using the Gau\ss\ equation
 \be\label{gauss}
 \bR (X,Y,Z,L) \  = \ 
 \R (X,Y,Z,L)
 -\W(X,Z)\W(Y,L)+ \W(X,L)\W(Y,Z),
 \ee
for all $X,Y,Z,L\in T\M.$
Let $s_i$ be a local orthonormal basis of $S|_{\M} = U^{\perp} \rightarrow \M$ and $X,Y \in T\M$. Then we have
\begin{align*}
-\mathrm{tr} \left( J \circ \R^S(X,Y) \right)|_{\M} & = \sum_i \g(\R^S(X,Y) s_i, J^S(s_i) ) \\
& = \sum_i \bR(X,Y,s_i,J^S(s_i)) \\
& = \sum_i \R(X,Y,s_i,J(s_i)) - \W(X,a_i) \W(Y,J(s_i)) + \W(X,J(s_i)) \W(Y,s_i) \\
& = -\mathrm{tr} \left( J \circ \R(X,Y) \right) - \W(Y,J(\W(X))) + \W(X,J(\W(Y))).
\end{align*}
Therefore, the additional condition on $(\M,\g,\W,U,J)$ ensuring special unitary screen holonomy is
\begin{align}
\mathrm{tr} \left( J \circ \R(X,Y) \right) = - \W(Y,J(\W(X))) + \W(X,J(\W(Y))).\label{spun}
\end{align}
For Theorem \ref{doublecone} one has to evaluate equation  \eqref{spun} in terms of data on $(\F,{\h}_s)$ as in Theorem~\ref{pf}. Let $\W_s=\W_{|\F_s \times \F_s}$. Then one finds for the embedding $(\F,{\h}_s) \hookrightarrow (\M,\g)$ with unit normal $u\,\partial_s$ along $\mathcal{F}$ that
 \[ \nabla_X Y = \nabla^{{\h}_s}_X Y + \W_s(X,Y) u\cdot \partial_s, \text{ } \forall X,Y \in T\F. \]
 That is $\W_s$ is actually the Weingarten tensor of this embedding. Thus, using a Riemannian version of the Gau\ss\ equation, the curvature $\R_s$ of ${\h}_s$ is for $X,Y,Z,L \in T\F$ related to that of $(\M,\g)$ via
\begin{align}
\R(X,Y,Z,L) = \R_s(X,Y,Z,L) + \W_s(X,Z) \W_s(Y,L) - \W_s(X,L)\W_s(Y,Z) \label{riemgauss}
\end{align}
and the Codazzi equation
\begin{align}
\R(X,u\partial_s,Y,Z) = \left(d^{\nabla^{{\h}_s}}\W_s\right)(Y,Z,X) := (\nabla^{{\h}_s}_Y \W_s)(Z,X) - (\nabla^{{\h}_s}_Z \W_s) (Y,X). \label{codeq}
\end{align}
Inserting equation \eqref{riemgauss} into \eqref{spun}, we obtain for $X,Y \in T\F$ after a straightforward calculation using $\nabla^{{\h}_s} J_s = 0$, 
\begin{align*}
0 =\mathrm{tr}(J_s \circ \R_s(X,Y) ) =  - 2 \text{Ric}_s(X,J_s(Y)).
\end{align*}
On the other hand, we also need to evaluate
\begin{align} 
\mathrm{tr}\left( J \circ \R(X,\partial_s) \right) = - \W(\partial_s,J(\W(X))) + \W(X,J(\W(\partial_s))).\label{strg}
 \end{align}
The right side of \eqref{strg} is calculated using \eqref{www} and is equal to $\tfrac{2}{u^2}\g(\text{grad}^{\g} (u), J(\W(X)))$.
 For the left side, \eqref{spun} and \eqref{codeq} yield with a straightforward computation
\begin{align*}
 - (\delta^{\h_s}\dot{\h}_s)(J_s(X))  +\tfrac{2}{u^2} \g(\text{grad}^{\g} (u), J(\W(X)))
\end{align*}
Thus, \eqref{strg} is equivalent to
\begin{align}
(\delta^{\h_s}\dot{\h}_s) = 0 \label{divfree}
\end{align} 
Hence, the constraints for special unitary screen holonomy are equivalent to the existence of Ricci-flat K\"ahler metrics $(J_s,\omega_s,{\h}_s=\omega_s(J_s \cdot, \cdot))$ on $\F$ satisfying the flow equation (\ref{flow1}) and  additionally solve \eqref{divfree}.

\bigskip

Finally we consider  to the case when {\em the screen holonomy splits or is trivial}:
Suppose that there are proper subgroups $H_1$ and $H_2$ of $\SO(n)$ such that
\begin{align}
\Hol(S,\nabla^S) \subset {H}_1 \times {H}_2 \subset \SO(n) \label{spscr}
\end{align} 
Equivalently, there is a nontrivial, decomposable and $\nabla^S$-parallel form in the screen bundle. Thus, by Theorem \ref{floweq} and the holonomy principle, \eqref{spscr} is equivalent to a local metric splitting
\begin{align}
(\mathcal{F},{\h}_s) \cong (\mathcal{F}_1 \times \mathcal{F}_2, {\h}_s^1+{\h}_s^2)
\end{align} 
with $\Hol(\mathcal{F}_i,{\h}_s^i) \subset {H}_i$ and additionally the volume forms $\text{vol}^{{\h}_s^i}$ of the metrics ${\h}_s^i$, $i=1,2$ evolve according to
\begin{align}
\mathcal{L}_{\partial{s}} \text{vol}^{{\h}_s^i} =- \tfrac{1}{2} {{\dot{\h}_s^{i,{\sharp}}}} \bullet \text{vol}^{{\h}_s^i} \label{volel}.
\end{align}
However, it is well-known and straightforward to compute that \eqref{volel} holds for any time evolving metric with associated family of volume forms. 

Finally, let us now consider the special case that the screen is flat, i.e., the standard representation of $\Hol(S,\nabla^S)$ decomposes into $n$ trivial subrepresentations. It follows immediateley from an iterated version of the statement in the case where the screen holonomy splits hat this is equivalent to $(\mathcal{F},{\h}_s)$ being a family of flat metrics.
This proves Theorem \ref{doublecone}.
$\hfill \Box$
%
%
%


\subsection{One-parameter families of special Riemannian structures}\label{families}
Here we reformulate the evolutions equations (\ref{flow1}) in the case of K\"ahler and $\G_2$-structures further and formulate a question. We focus on one-parameter families of K\"ahler structures.
\begin{Lemma}\label{kaehlerlemma}
Let $(\cal F, h_s,J_s,\omega_s)$ be an $s$-dependent family of Riemannian K\"ahler structures on $\cal F$, i.e. with parallel complex structures $J_s$ and $\omega_s=h_s(J_s.,.)$ and set \[\Lambda^{1,1}(\cal F,J_s):=\{\phi\in \Lambda^2(\cal F)\mid \phi(J_sX,J_sY)=\phi(X,Y)\}.\] 
Then $\omega $ and $J$ satisfy the flow equations
\begin{equation}
 \label{flowkaehler}
\dot J_s +\tfrac{1}{2} \dot{\h}_s^{\sharp} \bullet J_s=0, \qquad
\dot \omega_s+ \tfrac{1}{2} \dot{\h}_s^{\sharp} \bullet \omega_s=0
\end{equation}
if and only if 
\begin{equation}
 \label{flowkaehler1}\dot \omega_s\in \Lambda^{1,1}(\cal F,J_s).\end{equation}
\end{Lemma}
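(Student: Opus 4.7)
My approach is purely algebraic and pointwise: at a fixed point and fixed value of $s$, decompose the $h_s$-self-adjoint endomorphism $\dot h_s^\sharp$ orthogonally as $\dot h_s^\sharp=S_++S_-$, where $S_+$ commutes with $J_s$ and $S_-$ anticommutes with $J_s$. A direct calculation then gives $[\dot h_s^\sharp,J_s]=-2J_sS_-$ and $J_s\dot h_s^\sharp+\dot h_s^\sharp J_s=2J_sS_+$. Using the $h_s$-self-adjointness of $\dot h_s^\sharp$ to symmetrise the $\bullet$-action of $\dot h_s^\sharp$ on $\omega_s$, I obtain that the first flow equation in \eqref{flowkaehler} reads $\dot J_s=J_sS_-$ and the second one reads $\dot\omega_s=h_s(J_sS_+\,\cdot\,,\,\cdot\,)$.

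The core of the argument is the identity obtained by differentiating the K\"ahler compatibility $\omega_s=h_s(J_s\cdot,\cdot)$ in $s$, namely
\[
\dot\omega_s(X,Y)=h_s\bigl((\dot h_s^\sharp J_s+\dot J_s)X,Y\bigr)
=h_s\bigl((J_sS_+-J_sS_-+\dot J_s)X,Y\bigr).
\]
Writing $\dot J_s=A+B$ with $A$ the $h_s$-antisymmetric and $B$ the $h_s$-symmetric part, differentiating $J_s^2=-\mathrm{Id}$ gives $\dot J_sJ_s+J_s\dot J_s=0$, and combining this with its $h_s$-transpose shows that $A$ and $B$ both anticommute with $J_s$ individually. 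Differentiating the $J_s$-invariance identity $h_s(J_s\cdot,J_s\cdot)=h_s$ produces one further equation which, after the decomposition, simplifies to $B=J_sS_-$, while $A$ remains completely free.

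Substituting $\dot J_s=A+J_sS_-$ into the displayed formula yields
\[
\dot\omega_s(X,Y)=h_s(J_sS_+X,Y)+h_s(AX,Y).
\]
The first summand is a $(1,1)$-form since $J_sS_+$ is $h_s$-antisymmetric and $J_s$-commuting, while the second is a real $(2,0)+(0,2)$-form since $A$ is $h_s$-antisymmetric and $J_s$-anticommuting. Hence $\dot\omega_s\in\Lambda^{1,1}(\mathcal{F},J_s)$ is equivalent to $A=0$, which is precisely the first flow equation $\dot J_s=J_sS_-$; substituting $\dot J_s=A+J_sS_-$ back into the second flow equation $\dot\omega_s=h_s(J_sS_+\,\cdot\,,\,\cdot\,)$ shows that it too reduces to $A=0$. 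Thus the two flow equations and the $(1,1)$-condition are simultaneously equivalent. \textbf{The main obstacle} I anticipate is the careful bookkeeping in the middle step: one has to track the interplay between $h_s$-transposition and conjugation by $J_s$ in order to see that the differentiated $J_s$-Hermitian identity fixes only the symmetric part of $\dot J_s$, leaving the antisymmetric part $A$ as the single algebraic degree of freedom that governs all three conditions.
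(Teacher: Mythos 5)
Your argument is correct: the decompositions $\dot h_s^\sharp=S_++S_-$ (into $J_s$-commuting and $J_s$-anticommuting parts) and $\dot J_s=A+B$ (into $h_s$-antisymmetric and $h_s$-symmetric parts) are legitimate, the differentiated identities $\dot J_sJ_s+J_s\dot J_s=0$ and $h_s(J_s\cdot,J_s\cdot)=h_s$ do give that $A,B$ anticommute with $J_s$ and that $B=J_sS_-$, and the resulting formula $\dot\omega_s=h_s(J_sS_+\,\cdot,\cdot)+h_s(A\,\cdot,\cdot)$, with the first summand $J_s$-invariant and the second anti-invariant, yields the equivalence. The paper proves the same lemma by working at the level of $2$-forms rather than endomorphisms: it skew-symmetrises the $s$-derivative of $\omega_s=h_s(J_s\cdot,\cdot)$ to write $\dot\omega_s=-\tfrac12\dot h_s^\sharp\bullet\omega_s+\tfrac12\bigl(h_s(\dot J_s\cdot,\cdot)-h_s(\cdot,\dot J_s\cdot)\bigr)$, checks directly that $\dot h_s^\sharp\bullet\omega_s\in\Lambda^{1,1}$, and shows the remaining term lies in $\Lambda^2_-$ by differentiating the invariance $\omega_s(J_s\cdot,J_s\cdot)=\omega_s$, concluding via $\Lambda^2=\Lambda^{1,1}\oplus\Lambda^2_-$; note that your term $h_s(A\,\cdot,\cdot)$ is exactly that anti-invariant remainder, since $h_s(\dot J_sX,Y)-h_s(X,\dot J_sY)=2h_s(AX,Y)$. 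What your route buys is the explicit identity $B=J_sS_-$, which makes transparent that \emph{each} of the two flow equations separately is equivalent to $A=0$ and hence to the $(1,1)$-condition; in the paper this refinement is implicit, the first flow equation being recovered from the second together with the differentiated compatibility. What the paper's route buys is slightly less bookkeeping, since it never needs to split $\dot J_s$ or $\dot h_s^\sharp$ into pieces. Both are pointwise linear-algebra arguments of comparable length, so this is a genuine but modest variation rather than a different method.
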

\begin{proof} For brevity, we write drop the index $s$ indicating the $s$-dependence and write a dot for the Lie derivative with respect to $\partial_s$, i.e. $\dot{\omega}=\cal L_{\del_s}\omega$, etc.

First  compute 
\begin{equation}
\label{dothomega}
(\dot{\h}^{\sharp}\bullet \omega)(X,Y)=-\omega ( \dot{\h}^{\sharp}X,Y) -\omega(X, \dot{\h}^{\sharp} Y)
=
\dot h(X,JY)-
\dot h(JX,Y),
\end{equation}
which implies that  $(\dot{\h}^{\sharp}\bullet \omega)\in \Lambda^{1,1}(\cal F,J_s)$. This shows that equation (\ref{flowkaehler}) implies relation (\ref{flowkaehler1}).

Secondly,  Lie-differentiating and skew-symmetrising the relation 
$0=\omega-\h(J.,.)$ yields
\begin{eqnarray*}
\dot\omega(X,Y)&=&\tfrac{1}{2}\left( \dot \h(JX,Y)-\dot \h(X,JY) +\h(\dot JX,Y)-\h(X,\dot JY)\right)
\\
&=&
-\tfrac{1}{2}(\dot{\h}^{\sharp}\bullet \omega) +\tfrac{1}{2}\left(\h(\dot JX,Y)-\h(X,\dot JY)\right)
.
\end{eqnarray*}
by (\ref{dothomega}). 
We have seen that $(\dot{\h}^{\sharp}\bullet \omega)\in \Lambda^{1,1}(\cal F,J_s)$ and we claim that 
\begin{equation}\label{omega-claim}
\h(\dot JX,Y)-\h(X,\dot JY)\ \in\   \Lambda^{2}_-(\cal F,J_s):=\{\phi\in \Lambda^2(\cal F)\mid  \phi(J_sX,J_sY)=-\phi(X,Y)\},\end{equation}
which shows that relation (\ref{flowkaehler1}) implies equations (\ref{flowkaehler}). To prove claim (\ref{omega-claim})
we differentiate  $0=\omega(X,Y)-\omega(JX,JY)$ as in  \cite[Lemma 4.3]{StreetsTian14} and use 
 $\h(X,Y)=-\omega(JX,Y)=\omega(X,JY)$
to obtain that
\begin{eqnarray*}
0&=&
\dot\omega(X,Y)- \dot\omega(JX,JY)
-\omega(\dot JX,JY) -\omega(JX, \dot{J}Y)
\\
&=&
\dot\omega(X,Y)-\dot\omega(JX,JY)-
\h ( \dot{J}X,Y)+h(X, \dot{J} Y).
\end{eqnarray*}
This proves claim (\ref{omega-claim}) and because of $\Lambda^2(\cal F)=\Lambda^{1,1}(\cal F,J_s)\+ \Lambda^{2}_-(\cal F,J_s)$  establishes the desired equivalence.
\end{proof}
Note that not every family of K\"ahler stuctures $ (\h_s, J_s)$ satisfies equation (\ref{flowkaehler}):  for example for the constant family of flat metrics  $\h\equiv \h_s$ in even dimension the compatible complex structures are parametrised by the homogeneous space $\mathbf{GL}_n\C/\mathbf{U}(n)$. Taking a non constant curve of $\h$-parallel, i.e., constant,  complex structures $J_s$ gives a K\"ahler structure $(\h, J_s)$ with $\dot \h=0$ but $ \dot J_s\not=0$, which contradicts~(\ref{flowkaehler})\footnote{We would like to thank Vincente Cort\'{e}s for alerting us  to this example.}. 
Of course,  a constant family of constant complex structures $J_s\equiv J$ always satisfies  equation (\ref{flowkaehler}) for the flat metric $\h$. Clearly this suggest the following question:
 {\em given a family of K\"ahler metrics, is there a family of complex structures $J_s$, or of K\"ahler forms $\omega_s$, such that  
condition (\ref{flowkaehler1}), and hence flow equation (\ref{flowkaehler}) is satisfied?}

A difficulty when analysing equation 
(\ref{flowkaehler1}) arises from the fact that for an $s$-dependent family of complex structures $J_s$, the algebraic splitting of the two forms into $\Lambda^{1,1}$ and  $\Lambda^{2}_-$ {\em depends on the parameter $s$}.

For $\mathbf{G}_2$-structures the situation is similar.
Let $\phi_t$ be a family of $\G_2$-structures defining the family of Riemannian holonomy $\G_2$ metrics $\h_s$. Since the tangent space at a stable three form $\phi$ splits under $\G_2$ into three irreducible components
\begin{equation}\label{l3decomp}
\begin{array}{rcl}
\rr \+ Sym_0^2 (\rr^7)\+ \rr^7&\simeq &\Lambda^3\rr^7
\\[2mm]
(r,S,X)&\mapsto& r\phi+ S^\sharp\bullet \phi + X\hook (*\phi)
\end{array}
\end{equation}
it follows that   
\[\dot \phi = S^\sharp \bullet \phi +X\hook (*\phi),
 \]
 for a family of symmetric bilinear forms, whereas
   the associated metric satisfies
 \[\dot \h=2S,\]
see \cite{Bryant06g2,Karigiannis05,Karigiannis09}. Hence, similarly to the K\"ahler case, for the curve $\phi_t$ the 
equation
that results from Corollary \ref{hcor},
\begin{equation}\label{g2floweq}
\dot \phi +\tfrac{1}{2}\dot h^\sharp\bullet \phi=0\end{equation}
is equivalent to the condition 
\[\dot \phi\ \in\ \rr\oplus Sym_0(\rr^7),\]
i.e., that $\dot \phi$ has no $\rr^7$-component in the decomposition.
Again, it remains the question whether for a given family of parallel $\G_2$-structures $\h_s$ we can always find a corresponding family of stable $3$-forms $\phi_t$ satisfying this condition.
This suggest  to formulate the following

\begin{quest}
Let $(\bM,\bg)$ be a Lorentzian manifold  obtained from a Riemannian manifold $(\M,\g)$ satisfying the constraints via Theorem \ref{cauchy-vf-theo} and with screen holonomy $G=\mathrm{pr}_{\SO(n)}\Hol(\bM,\bg)$.
\begin{enumerate}
\item If $G\subset \mathbf{U}(\tfrac{n}{2})$, does there always exists a $\nabla^S$-parallel complex structure $J$ on $\cal S$ such that the associated family of $\h_s$-parallel and compatible complex structures  $J_s$  satisfies  the flow equation $\dot J_s+\tfrac{1}{2}\dot h^\sharp \bullet J_s=0$?
\item 
If $G\subset \mathbf{G}_2$ does there always exists a $\nabla^S$-parallel stable $3$-form $\phi$ on $\cal S$ such that the associated family of $\h_s$-parallel stable $3$-forms  $\phi_s$  satisfies  the flow equation $\dot \phi_s+\tfrac{1}{2}\dot h^\sharp \bullet \phi_s=0$?
\end{enumerate}
\end{quest}

\bbem
Interestingly, the flow equation \eqref{g2floweq} for the $\mathbf{G}_2$-case appears in \cite{Karigiannis09} in a completely unrelated context as $\mathbf{G}_2$-flow equation for not necessarily parallel 1-parameter families of $\mathbf{G}_2$-structures $\alpha_s \in \Omega^3(\mathcal{F})$ on $\F$. In fact, let $A_{ij}=A_{ij}(s)$ be any $s$-dependent family of symmetric $(2,0)$ tensors on $\F$ and consider the equation
\begin{align}
\partial_s \alpha_{ijk} = A_{i}^l \alpha_{ljk} + A^l_i \alpha_{ilk} + A_k^l \alpha_{ijl} \label{g2str}
\end{align}
for some given initial generic 3-form $\alpha_{s=0}$. As $\mathbf{G}_2 \subset \SO(7)$, every generic 3-form in dimension~$7$ yields a metrics ${\h}_s = {\h}_s(\alpha_s)$ in a natural way and \cite{Karigiannis09} then shows the relation
\begin{align*}
\partial_s h_{ij} = 2 A_{ij},
\end{align*}
which provides the link to our situation. However, it remains unclear under which conditions a parallel $\mathbf{G}_2$-structure $\alpha_0$ on $\F$ evolves under the flow equations \eqref{g2str} to a parallel family of $\mathbf{G}_2$-structures as required here.  In general, we have that (see \cite{Karigiannis09})
\[
\partial_s (\nabla^s_l \alpha_{ijk})  = A_i^m (\nabla^s_l \alpha_{mjk})  + A_j^m (\nabla^s_l \alpha_{imk}) + A_k^m (\nabla^s_l \alpha_{ijm}) + (\nabla^s A) \text{-terms}.
\]
To assure that a parallel $\mathbf{G}_2$-structure remains parallel under the flow, one would thus have to control the $\nabla^s A$-terms. This lies beyond the scope of this paper. The same discussion is possible on the level of $\mathbf{Spin}(7)$ structures and their flow equations which appeared in an unrelated context in \cite{Karigiannis08}.
\ebem

\section{Applications to Riemannian and Lorentzian spinor field equations}
 \label{igr}

Here we will use the previous results in order to obtain the two classification statements from Theorems \ref{imks} and \ref{nformsp} in the introduction.

\subsection{Generalised imaginary Killing spinors on Riemannian manifolds and the proof of Theorem \ref{imks}}
\label{ikssec}


Let us first suppose that $(\M,\g)$ admits an imaginary $\W$-Killing spinor. Differentiating \eqref{spinor-2}, it is easy to calculate that its Dirac current $U=U_{\varphi}$ defined by relation (\ref{diracU}) satisfies equation \eqref{genks}. Thus, Theorem \ref{cauchy-vf-theo} applies and there is a Lorentzian manifold $(\bM,\bg)$ in which $(\M,\g)$ embeds with second fundamental form $\W$ and $u\partial_t - U$ extends to a parallel null vector field $V$ on $\bM$.  
We extend the spinor $\varphi$ to a spinor $\phi$ on $\bM$ by parallel translation in direction of $V$, i.e. with  $\bnab_V \phi = 0$. Setting \[A(X) := \bnab_X \phi\] for $X \in \partial_t^{\bot}$, we find using parallelity of $V$ as well as the relations between spinorial and Riemannian curvature (for details see \cite{baum81}) that
\[
(\bnab_V A)(X) = \bnab_V \bnab_X \phi - \bnab_{\bnab_V X} \phi = \bR^{S^{\bg}}(V,X) \phi= \tfrac{1}{2} \bR(V,X) \cdot \phi =0.
\]
The well-known hypersurface formulas for the spinor covariant derivative \cite{baer-gauduchon-moroianu05} imply that the generalised Killing spinor equation for $\varphi$ is equivalent to $A|_{\M} = 0$ and as $A$ solves a linear first order symmetric hyperbolic PDE we conclude that $A \equiv 0$, and hence $\bnab \phi = 0$. In particular, $\Hol(\bM,\bg) \subset \SO(n) \ltimes \mathbb{R}^n$ fixes not only a parallel vector but also a nontrivial spinor. However, results in \cite{leistnerjdg,BaumLarzLeistner14} show that this can only happen if the screen holonomy satisfies that 
\begin{align}
\mathrm{pr}_{\SO(n)}(\Hol(\bM,\bg)) \subset {{H}}_1 \times....\times {{H}}_k, \label{amb}
\end{align}
with ${{H}}_i $ is equal to   $\mathbf{SU}(m_i)$, $\mathbf{Sp}(k_i)$, $\mathbf{G}_2$, $\mathbf{Spin}(7)$, or trivial.
By Theorem~\ref{pf}, $(\M,\g)$ is locally of the form $(\mathbb{R} \times \mathcal{F}, \frac{1}{u}^2 ds^2+ {\h}_s)$. Condition \eqref{amb} yields that locally $(\mathcal{F}_s,{\h}_s)$ splits into a metric product $(\mathcal{F}^1_s,h^1_s) \times...\times (\mathcal{F}^k_s,h^k_s)$ with $\Hol(\mathcal{F}^k_s,h^k_s) \subset {{H}}_k$. Moreover, Theorem \ref{doublecone} applied to this situation yields the evolution equations for ${\h}_s^i$ as given in Theorem~\ref{imks}. This proves (\ref{imks1}) in Theorem~\ref{imks}. The proof of the global version  in (\ref{imks2}) follows directly from the global statement in  Theorem~\ref{pf}.

Conversely, assume that $(\M,\g)$ is given as in the formulation of Theorem \ref{imks}. As an immediate consequence of Theorem \ref{doublecone}, $(\M,\g)$ embeds into a Lorentzian manifold $(\bM,\bg)$ with parallel null vector field $V$.  
Moreover we have that condition \eqref{amb} holds for the screen holonomy of $(\bM,\bg)$ which follows from Theorem \ref{doublecone}. However, in \cite{leistnerjdg,BaumLarzLeistner14} it is shown that each such Lorentzian holonomy group fixes a spinor whose Dirac current as defined in relation \eqref{diracc} is up to constant the null vector stabilised by $\SO(n) \ltimes \mathbb{R}^n$.
By the holonomy principle, there thus exists a parallel spinor $\phi$ with $V=V_{\phi}$. The well-known hypersurface formulas for the spinor covariant derivative in \cite{baer-gauduchon-moroianu05} imply that $\phi$ restricts to an imaginary $\W$-Killing spinor $\varphi = \phi|_{\M} \in\Gamma(\mathbb{S}^\g)$ on $\M$ with $\W$ being the Weingarten tensor of $\M\hookrightarrow \bM$  as given in Section \ref{prel}. It has been shown in \cite{BaumLeistnerLischewski16} that
\begin{align*}
U = \mathrm{pr}_{T\M} V|_{\M} = \mathrm{pr}_{T\M} {V_{\phi}}|_{\M} = U_{\varphi}.
\end{align*}
As $V_{\phi}$ is null, it follows that $V_{\phi} \cdot \phi = 0$, which evaluated on $\M$ gives precisely equation \eqref{spinor-2}. This shows (\ref{imks3}) in Theorem \ref{imks} and finishes the proof.
$\hfill \Box$

\subsection{Lorentzian holonomy and the proof of Theorem \ref{nformsp}}
For a manifold of the form \eqref{model} set $\mathcal{F} = \mathcal{F}_1 \times...\times \mathcal{F}_m$, $\h_w = \h_w^1 +...+\h_w^m$. Now introduce new coordinates by setting $v=-t+s$ and $w=t+s$, i.e. the metric in \eqref{model} becomes 
\begin{align}
\bg = -dt^2 + ds^2 + \h_{t+s} =: -dt^2 + \g_t \label{modelmet}
\end{align}
The metric \eqref{modelmet} admits a parallel null spinor if and only if $(\M:=\mathbb{R} \times \mathcal{F},\g_0)$ admits an imaginary generalised $\W$-Killing spinor additionally solving equation (\ref{spinor-2}). Indeed, it is clear that a parallel spinor restricts to an imaginary $\W$-Killing spinor on $(\M,\g_0)$. On the other hand, if a imaginary $\W$-Killing spinor $\varphi$ additionally solving equation (\ref{spinor-2}) exists on $(\M,\g_0)$ we use that $(\bM,\bg)$ admits a parallel null vector and exactly the same argument as in the proof of Proposition~\ref{hlem} or in Section~\ref{ikssec} extend $\varphi$ to a parallel null spinor for $(\bM,\bg)$. But with this equivalence, the statement follows immediately from the local classification result in Theorem \ref{imks}. 
$\hfill \Box$

\small

\providecommand{\MR}[1]{}\def\cprime{$'$} \def\cprime{$'$} \def\cprime{$'$}

\end{document}